\documentclass[fleqn,preprint,12pt]{elsarticle}
\usepackage{float}
\usepackage{graphicx,color}
\usepackage{epsfig,amssymb,amsthm}
\usepackage{amsmath}
\numberwithin{equation}{section}  
\usepackage[normalem]{ulem}
\graphicspath{{figures_pdf/}}
\usepackage{booktabs}
\usepackage{hyperref}
\journal{Elsevier}
\usepackage{empheq}
\AtBeginDocument{}
\usepackage{amsmath}
\usepackage{listings}

\usepackage{enumitem}
\usepackage{xcolor}
\usepackage{multirow}
\setenumerate[1]{itemsep=0pt,partopsep=0pt,parsep=\parskip,topsep=6pt}
\setitemize[1]{itemsep=0pt,partopsep=0pt,parsep=\parskip,topsep=6pt}
\setdescription{itemsep=0pt,partopsep=0pt,parsep=\parskip,topsep=6pt}

\newtheorem{thm}{Theorem}[section]  
\newtheorem{corollary}[thm]{Corollary}      
\newtheorem{lemma}[thm]{Lemma}              
\theoremstyle{remark}
\newtheorem{remark}{Remark}
\usepackage{siunitx}
\usepackage{multirow,bigstrut}
\usepackage{geometry}
\biboptions{numbers,sort&compress}
\usepackage{diagbox}
\usepackage[aboveskip=1pt]{subcaption}
\newlength\OneImW
\begin{document}
	
	\begin{frontmatter}
		
		\title{Complete solution to the weak persistent center and isochronous center problems of cubic polynomial systems}

\author{Feng Li$^a$, Huimei Liu$^a$, Chen Cao$^b$, Pei Yu$^{c,}$\footnote{Corresponding author.
				\hspace*{0.15in}E-mail: lfeng@lyu.edu.cn (F. Li), 240701001304@lyu.edu.cn (H. Liu), pyu@uwo.ca (P. Yu)}}
		
		\address{
			\small\it
			\vspace{0.2cm}\small\it  $^a$School of Mathematics and Statistics,
			Linyi University, \\[-0.7ex]
			\vspace{0.2cm}\small\it   Linyi, 276005,  China\\
			\small\it
			\vspace{0.2cm}\small\it  $^b$School of Mathematical Sciences,
			Shanghai Jiao Tong University, \\[-0.7ex]
			\vspace{0.2cm}\small\it   Shanghai, 200240, China \\
			\small\it
			\vspace{0.2cm}\small\it  $^c$Department of Mathematics,
			Western University, \\[-0.7ex]
			\vspace{0.2cm}\small\it   London, N6A 5B7, Canada \\ }

\begin{abstract}
In this paper, we investigate weakly persistent centers for several
classes of planar cubic differential systems, including systems with a
nondegenerate linear center and systems with a nilpotent center.
Necessary parameter relations are obtained by computing singular point
quantities or quasi-Lyapunov constants, while their sufficiency is
established by suitable analytic constructions. For the general complex cubic system, we obtain necessary and sufficient conditions for the origin to be a weakly persistent center, and the corresponding result for the associated general real cubic system follows by imposing the conjugacy relations between the complex coefficients. We further characterize the weakly persistent isochronous centers of the general complex cubic system by combining the weakly persistent center conditions with the vanishing of the linearization quantities and constructing time-preserving analytic linearizations.

\end{abstract}
	\begin{keyword}
		weakly persistent center; weakly persistent isochronous center; complex cubic system; nilpotent center; analytic linearization.
	\end{keyword}
	
\end{frontmatter}

\section{Introduction}\label{sec_intro}

The center--focus problem for planar analytic systems is one of the classical topics in the qualitative theory of differential equations, see \cite{AA2010,LF2019,LB2017,Gine2017}. Its study can be traced back to the pioneering works of Poincar\'e and Lyapunov \cite{Liapunov1966} on the local phase portrait of planar autonomous systems. 

Consider the planar real system
\begin{equation} \label{sys_pq}
\dot x=P(x,y),\quad
\dot y=Q(x,y),
\end{equation}
where \(P\) and \(Q\) are analytic in a neighborhood of the origin, and the origin is an isolated singular point. If the eigenvalues of the linearized system at the origin are purely imaginary, namely \(\pm i\), then the origin is called a nondegenerate center-type singularity. In this case, the local trajectories near the origin exhibit two possible behaviors: if all nearby trajectories are closed orbits, then the origin is called a center; otherwise, if the trajectories spiral toward or away from the origin, then the origin is called a focus \cite{Zhang1992}.

The distinction between centers and foci is usually characterized by the so-called focal values, also known as Lyapunov constants. Using normal form theory, the Poincaré return map \cite{Zhang1992}, or polar-coordinate expansions, one can derive a sequence of focal values describing the local behavior of trajectories near the singular point. If all focal values vanish, the system formally exhibits a center structure. However, the vanishing of finitely many focal values does not automatically guarantee the existence of an analytic center. Therefore, additional structural tools, such as first integrals, integrating factors \cite{LW1995,LYR2014}, inverse integrating factors, or symmetry properties \cite{Ye1986}, are often required to establish the center property.

The center problem for quadratic polynomial differential systems has been completely solved, and the corresponding center variety is known to consist of four irreducible components \cite{Bautin1954,Dulac1908}. For general cubic systems, however, the ordinary center problem remains open, and a complete classification of cubic centers is still unavailable. After normalization, the general cubic family contains fourteen independent coefficients, and the associated focal values rapidly become large multivariate polynomials. The difficulty is further illustrated by the result of Bothmer and Kröker, who showed that the vanishing of the first eleven focal values is not sufficient to guarantee a center \cite{Bothmer2010}. Thus, even the number of focal values required to characterize cubic centers remains unknown.

Owing to this difficulty, most results for cubic systems have been obtained for families satisfying additional structural restrictions. Examples include Kukles systems \cite{Li2019,JM2010}, quasi-homogeneous systems \cite{AZIZ2014}, $Z_2$-equivariant systems \cite{LF2020}, and systems possessing prescribed invariant algebraic curves \cite{LHM2027,Cozma2022}. Although center or isochronicity conditions have been determined for a number of such subclasses, these results do not yield a classification of centers for the general cubic system.

Within this framework, a natural question is whether the center property is preserved throughout a parameterized family of differential systems. This leads to the notions of persistent centers and weakly persistent centers introduced by Cima, Gasull, and Medrado \cite{CIMA2009}. For a real planar analytic differential system written in the complex form
\[
\dot{z}=iz+F(z,\bar z),
\]
the origin is called a persistent center if it is a center of
\[
\dot{z}=iz+\lambda F(z,\bar z)
\]
for every \(\lambda\in\mathbb C\), whereas it is called a weakly persistent center if it is a center of
\[
\dot{z}=iz+uF(z,\bar z)
\]
for every \(u\in\mathbb R\). Clearly, every persistent center is a weakly persistent center. Cima et al. concentrated on persistent centers and obtained complete classifications for several families, including cubic and rigid systems. The problem of weakly persistent centers was not investigated in their work. They also pointed out that analogous notions could be introduced for degenerate centers.

Chen, Romanovski, and Zhang \cite{CHEN2014} subsequently extended these notions to complex planar differential systems in which the two variables are independent and are not required to satisfy a conjugacy relation. More precisely, they considered
\[
\dot{x}=ix+\lambda F(x,y),\qquad
\dot{y}=-iy+\mu G(x,y),
\]
where \(x,y\in\mathbb C\). In this setting, the origin is called a 
persistent center if it is a center for all independent 
\(\lambda,\mu\in\mathbb C\), and a weakly persistent center if it is a center when \(\lambda=\mu\in\mathbb C\). Thus, weakly persistent centers of complex systems correspond to the use of a common complex perturbation parameter in the two equations.

They further investigated the general complex cubic system
\[
\begin{cases}
\dot{x}=ix+a_{20}x^2+a_{11}xy+a_{02}y^2
+a_{30}x^3+a_{21}x^2y+a_{12}xy^2+a_{03}y^3,\\
\dot{y}=-iy+b_{20}x^2+b_{11}xy+b_{02}y^2
+b_{30}x^3+b_{21}x^2y+b_{12}xy^2+b_{03}y^3.
\end{cases}
\]
For this system, they obtained eight sets of necessary and sufficient conditions for persistent centers. For the cubic Lotka--Volterra subfamily, they completely characterized weakly persistent centers and obtained five sets of necessary and sufficient conditions. However, a complete classification of weakly persistent centers for the general complex cubic system was not obtained because of the substantial computational difficulties involved in decomposing the associated affine variety.

An important feature of the complex formulation is its relation to real planar systems. Chen et al.~\cite{CHEN2014} showed that, under the appropriate conjugacy relations between the two sets of complex
coefficients, the persistent and weakly persistent center conditions of the complex system coincide with those of the corresponding real
system. Thus, a complete classification for the general complex system
also provides, by restriction to the conjugate parameter subspace, the
corresponding classification for the associated real system.

Further progress was made from the viewpoint of linearizability. 
Mencinger et al. \cite{Men2018} introduced the notions of linearizable persistent centers and linearizable weakly persistent centers. They obtained necessary and sufficient conditions for the general complex cubic system above to have a linearizable persistent center at the origin. Moreover, for a semi-Kolmogorov subfamily, they obtained necessary and sufficient conditions for linearizable weakly persistent centers.

Despite these advances, weakly persistent centers of the general
complex cubic system have not yet been completely characterized.
Existing complete results for weakly persistent centers and
linearizable weakly persistent centers are mainly restricted to
particular subclasses, such as the cubic Lotka--Volterra and
semi-Kolmogorov systems. It is therefore natural to ask whether a
complete classification can be obtained for the general complex cubic
system. In view of the correspondence between complex and real systems
under the conjugacy conditions, such a classification would also yield
the weakly persistent center conditions for the associated general real cubic system.

To address this question, after the standard normalization of the linear part, we consider the following \(1:-1\) resonant form of the general complex cubic system with a common perturbation parameter
\begin{equation}\label{sys_general}
\begin{cases}
\dot z=z+\varepsilon\left(
a_{20}z^2+a_{11}zw+a_{02}w^2
+a_{30}z^3+a_{21}z^2w+a_{12}zw^2+a_{03}w^3
\right),\\
\dot w=-w-\varepsilon\left(
b_{20}w^2+b_{11}zw+b_{02}z^2
+b_{30}w^3+b_{21}zw^2+b_{12}z^2w+b_{03}z^3
\right),
\end{cases}
\end{equation}
where \(\varepsilon\in\mathbb C\), and all coefficients \(a_{ij}\) and \(b_{ij}\) are independent complex parameters, with no conjugacy relation imposed between \(z\) and \(w\).

For system~\eqref{sys_general}, our first objective is to obtain a
complete classification of weakly persistent centers directly in terms
of the original parameters. By imposing the conjugacy relations between the two sets of complex coefficients, this classification also yields the corresponding weakly persistent center conditions for the associated general real cubic system. We further investigate weakly  persistent isochronous centers of system~\eqref{sys_general}. By combining the weakly persistent center classification with the
linearization quantities, we derive the corresponding necessary and
sufficient conditions and establish their sufficiency by
time-preserving analytic linearizations.

Starting from the general complex cubic system, we further investigate weakly persistent centers for systems with degenerate center-type singularities, with particular emphasis on nilpotent singularities. As noted by Cima et al. \cite{CIMA2009}, the notion underlying weakly persistent centers can also be considered for degenerate centers. This motivates us to study the weakly persistent center problem when the singularity at the origin is nilpotent rather than nondegenerate.

More precisely, we consider the cubic polynomial system
\begin{equation}\label{sys_nilp}
\begin{cases}
\dot x=y+\varepsilon(a_{20}x^2+a_{11}xy+a_{02}y^2
+a_{30}x^3+a_{21}x^2y+a_{12}xy^2+a_{03}y^3),\\
\dot y=-x^3+\varepsilon(b_{11}xy+b_{02}y^2
+b_{21}x^2y+b_{12}xy^2+b_{03}y^3).
\end{cases}
\end{equation}

The linearization of system~\eqref{sys_nilp} at the origin is nilpotent, and hence the usual focal-value approach for nondegenerate center-type singularities is no longer directly applicable. Instead, quasi-Lyapunov constants are used to derive necessary conditions for weakly persistent centers, while the sufficiency of the resulting conditions is established separately by appropriate analytic structures.

The remainder of this paper is organized as follows.
Section~\ref{sec_pre} introduces the basic definitions and analytic
tools used throughout the paper. In Section~\ref{sec_general_cubic_center}, we derive necessary and sufficient conditions for weakly persistent centers of the general complex cubic system and then restrict the resulting classification to the conjugate parameter subspace to obtain the corresponding result for the associated real cubic system.
Section~\ref{sec_cubic_iso} is devoted to weakly persistent isochronous centers of the general complex cubic system.In Section~\ref{sec_nilpotent}, we investigate weakly persistent centers for the cubic polynomial system with a nilpotent singularity. Finally, Section~\ref{sec_conclusion} presents the conclusion.


\section{Preliminaries}\label{sec_pre}

\subsection{Singular point quantities}

We first introduce the singular point quantities used throughout this paper. 
Consider a real planar analytic system whose linear part is a center,
\[
\dot x=-y+O(2),\qquad
\dot y=x+O(2).
\]
Introducing the complex variables
\[
z=x+iy,\qquad w=x-iy,
\]
and rescaling the time variable by
\[
T=it,
\]
the linear part is transformed into
\[
\frac{dz}{dT}=z,\qquad
\frac{dw}{dT}=-w.
\]
Thus, the eigenvalues $\pm i$ of the real system are transformed into
$1$ and $-1$ by the complex change of variables together with a nonzero
rescaling of time.

When the coefficients of the resulting complex system satisfy the
corresponding conjugacy relations, the complex system represents a real
planar system on the real slice $w=\overline{z}$. In this case, the
quantities obtained from the recursive procedure below correspond, up
to nonzero normalization factors, to the focal values of the associated
real system. For a complex system with eigenvalues $1$ and $-1$, these
quantities are also often called saddle quantities. Since the variables
and coefficients considered in this paper are allowed to be independent
complex quantities, we use the unified terminology
\emph{singular point quantities} throughout the paper.

Consider the complex analytic system
\begin{equation}\label{sys_saddle_general}
\dot z=z+\varepsilon P(z,w),\qquad
\dot w=-w+\varepsilon Q(z,w),
\end{equation}
where $z$ and $w$ are independent complex variables,
$\varepsilon\in\mathbb C$, and $P$ and $Q$ contain only nonlinear
terms.

To study the existence of a local first integral near the origin, we
seek a formal power series of the form
\[
H(z,w)
=
zw+\sum_{j+k\geq3}h_{jk}z^jw^k.
\]
Let
\[
X_0
=
z\frac{\partial}{\partial z}
-
w\frac{\partial}{\partial w}
\]
denote the linear part of the vector field associated with
system~\eqref{sys_saddle_general}. Since
\[
X_0(z^jw^k)
=
(j-k)z^jw^k,
\]
the coefficients corresponding to the nonresonant monomials
$z^jw^k$, with $j\neq k$, can be determined recursively.

The resonant monomials, however, cannot in general be eliminated.
Consequently, the coefficients $h_{jk}$ can be chosen recursively so
that
\begin{equation*}
\frac{dH}{dt}
=
\sum_{m=1}^{\infty}
g_m(\varepsilon)(zw)^{m+1}.
\end{equation*}
The quantities
\[
g_m(\varepsilon),\qquad m=1,2,\ldots,
\]
are called the \emph{singular point quantities} of the origin.

If system~\eqref{sys_saddle_general} admits an analytic first integral
of the form
\[
H(z,w)=zw+O(3),
\]
then necessarily
\[
g_m(\varepsilon)=0,
\qquad m=1,2,\ldots.
\]
Accordingly, a necessary condition for the origin to be a weakly
persistent center is
\begin{equation*}
g_m(\varepsilon)\equiv0,
\qquad m=1,2,\ldots,
\end{equation*}
for every value of the common perturbation parameter
$\varepsilon$.

For the polynomial systems considered in this paper, each singular
point quantity is a polynomial in $\varepsilon$. Hence, it can be
written as
\[
g_m(\varepsilon)
=
\sum_{k=0}^{N_m}
G_{m,k}\varepsilon^k,
\]
where the coefficients $G_{m,k}$ are polynomials in the system
parameters. Therefore,
\(
g_m(\varepsilon)\equiv0
\)
is equivalent to
\[
G_{m,k}=0,
\qquad
k=0,1,\ldots,N_m.
\]

Consequently, necessary parameter relations for weakly persistent
centers can be obtained by equating to zero all coefficients of the
singular point quantities with respect to $\varepsilon$. The resulting
polynomial systems are then analyzed by symbolic algebraic
computations. The sufficiency of the obtained parameter relations is
established separately by constructing analytic first integrals,
inverse integrating factors, or other suitable analytic structures.

\subsection{Weakly persistent isochronous centers and linearization quantities}

We next introduce weakly persistent isochronous centers and the
corresponding linearization quantities. Consider again
system~\eqref{sys_saddle_general}. The origin is called a weakly persistent isochronous center if, for
every $\varepsilon\in\mathbb R$, there exists a local analytic
near-identity transformation
\[
Z=z+O(2),\qquad W=w+O(2),
\]
which transforms system~\eqref{sys_saddle_general}, without any
rescaling of the time variable, into the linear system
\[
\dot Z=Z,\qquad
\dot W=-W.
\]
This notion corresponds to the linearizable weakly persistent center
considered in \cite{Men2018}.

To derive necessary conditions for the origin to be a weakly persistent
isochronous center, we seek a formal near-identity transformation of the
form
\[
Z=z+\sum_{j+k\geq2}p_{jk}z^jw^k,\qquad
W=w+\sum_{j+k\geq2}q_{jk}z^jw^k.
\]
Substituting these expansions into
system~\eqref{sys_saddle_general} and comparing the coefficients of
monomials of the same degree give a sequence of homological equations.

For the linear part
\[
X_0
=
z\frac{\partial}{\partial z}
-
w\frac{\partial}{\partial w},
\]
the monomial $z^jw^k$ has weight $j-k$. Hence, the resonant monomials
in the first equation satisfy
\[
j-k=1,
\]
whereas those in the second equation satisfy
\[
j-k=-1.
\]
The coefficients of these resonant monomials cannot, in general, be
eliminated by a near-identity transformation. They give rise to the
linearization quantities
\[
L_m^{+}(\varepsilon),\qquad
L_m^{-}(\varepsilon),\qquad
m=1,2,\ldots.
\]

After eliminating the nonresonant terms recursively, the corresponding
formal normal form can be written as
\[
\dot Z
=
Z+\sum_{m=1}^{\infty}
L_m^{+}(\varepsilon)Z^{m+1}W^m,
\]
and
\[
\dot W
=
-W+\sum_{m=1}^{\infty}
L_m^{-}(\varepsilon)Z^mW^{m+1}.
\]
Therefore, a necessary condition for the origin to be a weakly
persistent isochronous center is
\[
L_m^{+}(\varepsilon)\equiv0,\qquad
L_m^{-}(\varepsilon)\equiv0,
\qquad
m=1,2,\ldots,
\]
for every value of the common perturbation parameter $\varepsilon$.

For the polynomial systems considered in this paper, each linearization
quantity is a polynomial in $\varepsilon$. Hence, it can be written as
\[
L_m^{\pm}(\varepsilon)
=
\sum_{k=0}^{N_m}
L_{m,k}^{\pm}\varepsilon^k.
\]
Therefore, \(
L_m^{\pm}(\varepsilon)\equiv0\)
is equivalent to
\[
L_{m,k}^{\pm}=0,
\qquad
k=0,1,\ldots,N_m.
\]

Consequently, necessary parameter relations for weakly persistent
isochronous centers can be obtained by equating to zero all coefficients
of the linearization quantities with respect to $\varepsilon$. The
resulting polynomial systems are then analyzed by symbolic algebraic
computations. The sufficiency of the obtained parameter relations is
established separately by constructing time-preserving analytic
linearizations.

\subsection{Nilpotent singularities and quasi-Lyapunov constants}

We next recall the method of quasi-Lyapunov constants for nilpotent singularities; see \cite{LYR2014,YU2017,Li2019}. Consider the analytic system
\begin{equation*}
    \begin{cases}
\dot{x} = X(x,y) = y + \sum_{i+j\ge 2} a_{ij} x^i y^j, \\[4pt]
\dot{y} = Y(x,y) = \sum_{i+j\ge 2} b_{ij} x^i y^j.
\end{cases}
\end{equation*}
Since $X_y(0,0)=1$, the implicit function theorem guarantees that the equation
$$
X(x,y)=0
$$
admits a unique analytic solution
$$
y=f(x)=\sum_{j=2}^{\infty} c_j x^j, \qquad f(0)=0,
$$
in a neighborhood of the origin. Suppose that
$$
Y(x,f(x)) = \alpha_k x^k + o(x^k), \qquad \alpha_k \neq 0.
$$
Then the origin is called a nilpotent singular point of multiplicity $k$. In particular, the nilpotent singularities of systems \eqref{sys_nilp} is of multiplicity three.

For a cubic nilpotent singular point, the center--focus problem can be studied by means of quasi-Lyapunov constants. More precisely, for a fixed nonnegative integer $s$, one can recursively construct a formal power series
$$
M(x,y) = x^4 + y^2 + o(r^4), \qquad r = \sqrt{x^2 + y^2},
$$
such that
\[
\frac{\partial}{\partial x}\!\left(\frac{M^{s+1}(x,y)}{X(x,y)}\right)
+ \frac{\partial}{\partial y}\!\left(\frac{M^{s+1}(x,y)}{Y(x,y)}\right)
= \frac{1}{M^{s+2}(x,y)}
\sum_{m=1}^{\infty} (2m-4s-1) V_m \bigl[x^{2m+4} + o(r^{2m+4})\bigr].
\]
The quantities $V_m$ are called the quasi-Lyapunov constants of the origin; see \cite{LYR2014,LF2012}.

The coefficients of $M(x,y)$ are determined recursively by expanding the preceding identity and comparing terms of the same weighted degree. In the recursive computations used below, if $w_{2m+4}$ denotes the coefficient obtained at the $m$-th step, then
$$
V_m = \frac{w_{2m+4}}{2m-4s-1}, \qquad m=1,2,\dots.
$$

Under the monodromy assumptions imposed in this paper, the origin is a center if and only if all its quasi-Lyapunov constants vanish:
$$
V_m = 0, \qquad m=1,2,\dots.
$$

For the perturbed nilpotent systems considered below, the quasi-Lyapunov constants are polynomials in \(\varepsilon\). Therefore, the weakly persistent center conditions are obtained by requiring all their coefficients with respect to \(\varepsilon\) to vanish.

\subsection{Criteria for proving the sufficiency of center and isochronous center conditions}
\label{subsec_sufficiency_criteria}

The vanishing of the singular point quantities or quasi-Lyapunov
constants provides necessary parameter relations for the origin to be
a weakly persistent center, while the vanishing of the linearization
quantities gives necessary relations for a weakly persistent
isochronous center. To establish the sufficiency of the resulting
parameter relations, the corresponding analytic structures must be
verified for every admissible value of the common perturbation
parameter.

For the weakly persistent center problem, we construct analytic first
integrals of the form
\[
H(z,w)=zw+O(3).
\]
Depending on the structure of the reduced system, the proofs are based
on explicit first integrals, inverse integrating factors, regular
analytic initial-value problems, or convergent analytic constructions.
For the weakly persistent isochronous center problem, we further seek
analytic coordinates
\[
U=z+O(2),
\qquad
V=w+O(2),
\]
satisfying
\[
XU=U,
\qquad
XV=-V,
\]
so that the system is analytically linearized without any
reparametrization of time. We record below several analytic tools that
will be used repeatedly in the subsequent sufficiency proofs.

We first recall a standard consequence of the theory of inverse
integrating factors; see, for example, \cite{Garcia2010,Giacomini1996}.

\begin{lemma}\label{lem_inverse_integrating_factor}
Consider the analytic system
\[
\dot z=P(z,w),
\qquad
\dot w=Q(z,w),
\]
where
\[
P(z,w)=z+O(2),
\qquad
Q(z,w)=-w+O(2).
\]
If there exists an analytic function $V(z,w)$ satisfying
\[
V(0,0)=1,
\qquad
PV_z+QV_w=(P_z+Q_w)V,
\]
then the system admits a local analytic first integral of the form
\[
H(z,w)=zw+O(3).
\]
In particular, if an analytic function $W(z,w)$ satisfies
\[
XW=KW,
\qquad
W(0,0)=1,
\]
where
\[
X
=
P\frac{\partial}{\partial z}
+
Q\frac{\partial}{\partial w}
\]
and
\[
\operatorname{div}X=\lambda K
\]
for some constant $\lambda$, then $V=W^\lambda$ is an inverse
integrating factor.
\end{lemma}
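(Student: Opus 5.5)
The plan is to prove the two assertions of Lemma~\ref{lem_inverse_integrating_factor} separately, first the general inverse integrating factor statement and then the special construction $V=W^\lambda$.

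For the first assertion, since $V(0,0)=1$, the function $1/V$ is analytic near the origin, and the identity $PV_z+QV_w=(P_z+Q_w)V$ is equivalent to
\[
\frac{\partial}{\partial z}\Big(\frac{P}{V}\Big)+\frac{\partial}{\partial w}\Big(\frac{Q}{V}\Big)=0 .
\]
This divergence-free condition is exactly what we expand: the left-hand side equals $\big((P_z+Q_w)V-PV_z-QV_w\big)/V^2$. By the holomorphic Poincar\'e lemma on a polydisc, the closed form $\frac{Q}{V}\,dz-\frac{P}{V}\,dw$ is exact. Hence there is an analytic $H$ with
\[
H_z=-\frac{Q}{V},\qquad H_w=\frac{P}{V},\qquad H(0,0)=0 .
\]
Then $XH=PH_z+QH_w=(-PQ+QP)/V=0$, so $H$ is a first integral. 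Its lowest-order terms follow from $P=z+O(2)$, $Q=-w+O(2)$ and $V=1+O(1)$: we get $H_z=w+O(2)$ and $H_w=z+O(2)$. Therefore $H=zw+O(3)$, which is the required normalization.

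For the second assertion, $W(0,0)=1$ means a branch of $W^\lambda=\exp(\lambda\log W)$ with $W^\lambda(0,0)=1$ is analytic near the origin for any complex $\lambda$. Since $X$ is a derivation,
\[
X(W^\lambda)=\lambda W^{\lambda-1}XW=\lambda K W^\lambda=(\operatorname{div}X)W^\lambda .
\]
This is precisely the equation $PV_z+QV_w=(P_z+Q_w)V$ for $V=W^\lambda$, with $V(0,0)=1$. The first part then applies, giving both that $V$ is an inverse integrating factor and the first integral $H=zw+O(3)$.

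No real obstacle is expected. The only points needing care are working on a simply connected neighbourhood, such as a polydisc, so that the closed holomorphic $1$-form is exact, and choosing the principal branch of $W^\lambda$ near $W=1$; both are handled by shrinking the neighbourhood.
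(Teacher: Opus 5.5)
Your proof is correct. Rewriting $XV=(\operatorname{div}X)V$ as $\partial_z(P/V)+\partial_w(Q/V)=0$, integrating the resulting closed holomorphic $1$-form on a polydisc to obtain $H_z=-Q/V$ and $H_w=P/V$, reading off $H=zw+O(3)$, and using $X(W^\lambda)=\lambda W^{\lambda-1}XW$ for the second part is the standard inverse-integrating-factor argument. The paper gives no proof of its own for this lemma; it only cites it as a standard consequence of that theory, and your argument is exactly what that citation stands for.
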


The next lemma provides another useful way of constructing analytic
first integrals after a suitable linear change of coordinates.

\begin{lemma}\label{lem_regular_IVP}
Suppose that
\[
x=\alpha z+\beta w,
\qquad
y=\alpha z-\beta w,
\qquad
\alpha\beta\neq0,
\]
transforms the system into
\[
\dot x=y\,d(x),
\qquad
\dot y=x\,q(x)+bxy+A(x)y^2,
\]
where
\[
d(0)=q(0)=1.
\]
Let
\[
k(x)=\frac{d'(x)+A(x)}{d(x)}.
\]
If
\[
\bigl(d(x)q(x)\bigr)'
-k(x)d(x)q(x)
=
\gamma x
\]
for some constant $\gamma$, then the system admits a local analytic
first integral
\[
H(z,w)=zw+O(3).
\]
\end{lemma}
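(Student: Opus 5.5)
The plan is to reduce the system, by a change of the dependent variable and a reparametrization of the independent variable, to a planar affine linear system that is regular at the origin. For that system an analytic first integral comes directly from the analytic dependence of solutions of a regular initial-value problem on initial data.

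\emph{Step 1 (eliminate $d$ and $A$).} Put $Y=y\,d(x)=\dot x$. A direct computation gives
\[
\dot x=Y,\qquad \dot Y=F(x)\,x+b\,xY+k(x)Y^2,\qquad F:=dq ,
\]
since $\dot Y=d\,\dot y+d'\,y\,\dot x$ and $(A+d')/d=k$. Let $E(x)=\exp\int_0^x k$, which is analytic with $E(0)=1$. Set $G=Y/E(x)$. The quadratic term cancels, leaving
\[
\dot G=\frac{x\bigl(F(x)+bY\bigr)}{E(x)} .
\]

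\emph{Step 2 (new position variable).} Introduce $\sigma(x)=\int_0^x s/E(s)\,ds=x^2/2+O(x^3)$, so $\dot\sigma=xG$. Rescale time by $d\tau=x\,dt$. Along orbits this rescaling only multiplies the vector field by $x$, so orbits are preserved. We obtain
\[
\frac{d\sigma}{d\tau}=G,\qquad \frac{dG}{d\tau}=\frac{F(x)}{E(x)}+bG .
\]
The hypothesis $(dq)'-k\,dq=\gamma x$ is exactly $\bigl(F/E\bigr)'=\gamma x/E$, that is, $\frac{d}{d\sigma}(F/E)=\gamma$. With $F(0)=E(0)=1$ this gives $F/E=1+\gamma\sigma$. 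Hence the reduced system is the affine linear system
\[
\sigma'=G,\qquad G'=1+\gamma\sigma+bG .
\]
This is where the hypothesis is used.

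\emph{Step 3 (first integral).} At $(\sigma,G)=(0,0)$ this affine system has $G'=1\neq0$, so the line $G=0$ is transversal. For a point $(\sigma_0,G_0)$ near the origin, let $h(\sigma_0,G_0)$ be the $\sigma$-coordinate of the point where its trajectory meets $G=0$. By the analytic dependence of solutions of a regular initial-value problem on initial data, together with the implicit function theorem applied to the hitting time, $h$ is analytic. It is constant on orbits, so it is a first integral. To second order, $G\,dG/d\sigma\approx1$ gives
\[
h=\sigma-\tfrac12 G^2+O(3).
\]

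\emph{Step 4 (pull back).} Substitute $\sigma=x^2/2+O(x^3)$ and $G=y+O(2)$, which follows from $G=y\,d/E$. Then
\[
H_0=h\bigl(\sigma(x),G(x,y)\bigr)=\tfrac12(x^2-y^2)+O(3)
\]
is analytic in $(x,y)$ and is a first integral of the original system, because the time rescaling preserves orbits. Finally, $x^2-y^2=4\alpha\beta\,zw$, so $H=H_0/(2\alpha\beta)=zw+O(3)$, which is the required first integral.

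The main obstacle is justifying the time rescaling $d\tau=x\,dt$ and the use of $\sigma$ as a coordinate. The map $x\mapsto\sigma$ is two-to-one, and the rescaling degenerates on $x=0$. To avoid relying on $\sigma$ as a coordinate, I would verify directly that $H_0$, defined as a composition of analytic functions of $(x,y)$, satisfies $\dot H_0=0$. By the chain rule,
\[
\dot H_0=x\bigl(h_\sigma G+h_G(1+\gamma\sigma+bG)\bigr),
\]
and the bracket vanishes identically because $h$ is a first integral of the affine system. This argument holds on all of $x\neq0$ and also at $x=0$, so no division by $x$ is ever needed.
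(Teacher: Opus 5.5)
Your proposal is correct and follows essentially the same route as the paper. Your $G=Y/E$ and $\sigma$ are the paper's $\mu p$ and $T$ (with $\mu=1/E$), your affine system becomes the paper's $(T,R)$-system after the shear $R=G-b\sigma$, and the first integral comes from the same regular initial-value construction. Your chain-rule check $\dot H_0=x\cdot 0$ also justifies the conclusion at $x=0$ more directly than the paper's remark that the identity extends analytically across $x=0$.
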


\begin{proof}
Set $p=y\,d(x)$, and let $\mu(x)$ be the analytic solution of
\[
\mu'(x)=-k(x)\mu(x),
\qquad
\mu(0)=1.
\]
Define
\[
T'(x)=\mu(x)x,
\qquad
T(0)=0,
\qquad
R=\mu(x)p-bT.
\]
By the hypothesis,
\[
\mu(x)d(x)q(x)=1+\gamma T.
\]
A direct calculation gives
\[
\dot T=x(R+bT),
\qquad
\dot R=x(1+\gamma T).
\]
Hence, away from $x=0$,
\[
\frac{dT}{dR}
=
\frac{R+bT}{1+\gamma T}.
\]
Since the right-hand side is analytic near $(T,R)=(0,0)$, this
equation admits a local analytic first integral $K(T,R)$ normalized by
\[
K(T,0)=T.
\]
Its quadratic part is
\[
K(T,R)
=
\frac{x^2-y^2}{2}
+O(3).
\]
Since
\[
x^2-y^2=4\alpha\beta zw,
\]
the function
\[
H(z,w)
=
\frac{1}{2\alpha\beta}K(T,R)
\]
is an analytic first integral satisfying
\[
H(z,w)=zw+O(3).
\]
The identity extends analytically across $x=0$, and hence the result
holds in a full neighborhood of the origin.
\end{proof}

We next record two criteria that will be used in proving the
sufficiency of the weakly persistent isochronous center conditions.
The first shows that, once an analytic first integral is available, it
is sufficient to construct only one analytic linearizing coordinate.

\begin{lemma}\label{lem_first_integral_linearization}
Consider an analytic system with linear part
\[
\dot z=z,
\qquad
\dot w=-w.
\]
Suppose that the system admits an analytic first integral
\[
H(z,w)=zw+O(3).
\]
If there exists an analytic function
\[
V(z,w)=w+O(2)
\]
satisfying
\[
XV=-V,
\]
then there exists an analytic function
\[
U(z,w)=z+O(2)
\]
such that
\begin{equation}\label{eq:iso_eigen_pair}
XU=U,
\qquad
XV=-V.
\end{equation}
Consequently, $(U,V)$ gives a local time-preserving analytic
linearization. The same conclusion holds if an analytic function
$U=z+O(2)$ satisfying $XU=U$ is given instead.
\end{lemma}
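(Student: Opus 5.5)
We need the lemma: an analytic first integral $H=zw+O(3)$ together with an analytic $V=w+O(2)$ with $XV=-V$ yields an analytic $U=z+O(2)$ with $XU=U$. The natural choice is $U=H/V$, since then
\[
XU=\frac{XH\cdot V-H\cdot XV}{V^2}=\frac{0+HV}{V^2}=U .
\]
The issue is that $H/V$ must be analytic at the origin, because $V$ vanishes on a curve through the origin. So the plan is to show that $V$ divides $H$ in the ring of convergent power series $\mathbb C\{z,w\}$.

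\emph{Step 1: divisibility.} Since $V=w+O(2)$, the Weierstrass preparation theorem (or the implicit function theorem) lets us change coordinates analytically to $(z,V)$, with $w=\phi(z,V)$. In these coordinates $V$ is a coordinate function. Divisibility of $H$ by $V$ is then equivalent to $H(z,\phi(z,0))\equiv0$, that is, to $H$ vanishing on the analytic curve $\{V=0\}$.

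\emph{Step 2: $H$ vanishes on $\{V=0\}$.} The relation $XV=-V$ says that $\{V=0\}$ is invariant, and the restriction of $X$ to it has linear part $\dot z=z$. Hence this curve is the (analytic) unstable manifold of the saddle. Its points flow backward in time to the origin. The function $H$ is constant along orbits and $H(0,0)=0$, so by continuity $H$ is identically zero on the curve.

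For a purely formal or algebraic version, valid for complex time, restrict to the curve: $h(z)=H(z,\phi(z,0))$ satisfies $\tilde X h=0$, where $\tilde X=(z+O(z^2))\,d/dz$. Writing $h=\sum c_k z^k$ and comparing the lowest-order terms gives $k c_k=0$, so every $c_k=0$ because $h(0)=0$. I would use this algebraic version, since it avoids real-time flow arguments in the complex setting.

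\emph{Step 3: normalization.} Write $H=V\,U$ with $U$ analytic. Comparing quadratic parts gives $zw=w\cdot U_1$, so the linear part of $U$ is $z$ and $U=z+O(2)$. The computation displayed above then gives $XU=U$ wherever $V\neq0$, and hence everywhere by analyticity. Since the Jacobian of $(U,V)$ at the origin is the identity, $(U,V)$ is a local analytic coordinate system in which the system reads $\dot U=U$, $\dot V=-V$, with no time rescaling.

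\emph{Symmetric case and main obstacle.} The case where $U$ is given instead of $V$ follows by the same argument with the roles of $z$ and $w$ exchanged, now using the stable manifold $\{U=0\}$ and $X$ restricted there with linear part $-w$. The main obstacle is Step 2, namely checking carefully that $H$ vanishes on the invariant curve. The one-dimensional coefficient argument handles it cleanly.
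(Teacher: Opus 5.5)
Your proposal is correct and follows the paper's proof essentially step for step: the curve $\{V=0\}$ is invariant, $H$ vanishes on it, analytic division gives $H=UV$, and $XU=U$ then follows from $XH=0$. The paper writes that last step as $0=XH=(XU-U)V$ instead of using the quotient rule. Your power-series argument that $h(z)=H(z,\phi(z,0))$ vanishes identically is a useful tightening of the paper's one-line appeal to ``$H$ is constant along the orbits,'' which is less transparent for complex systems.
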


\begin{proof}
Since $XV=-V$, the analytic curve $V=0$ is invariant. The restriction
of the vector field to this curve has the form
\[
\dot z=z+O(z^2).
\]
Since $H$ is constant along the orbits and $H(0,0)=0$, its restriction
to $V=0$ vanishes identically near the origin. Hence, analytic division
gives
\[
H=UV
\]
for some analytic function $U$. Since
\[
H=zw+O(3),
\qquad
V=w+O(2),
\]
we have
\[
U=z+O(2).
\]
Applying $X$ to $H=UV$ gives
\[
0
=
XH
=
(XU)V+U(XV)
=
(XU-U)V.
\]
Therefore,
\[
XU=U,
\]
and hence \eqref{eq:iso_eigen_pair} holds. Since the Jacobian of
$(U,V)$ at the origin is the identity, $(U,V)$ is a local analytic
change of coordinates. Thus, the system is transformed into
\[
\dot U=U,
\qquad
\dot V=-V
\]
without any change of the time variable. The case in which $U$ is
given first is analogous.
\end{proof}

For some algebraic parameter families, an explicit analytic
linearization is first constructed on a generic subset. The following
criterion allows the remaining degenerate parameter values to be
included.

\begin{lemma}\label{lem_linearization_extension}
Consider an irreducible algebraic family of analytic systems with
linear part
\[
\dot z=z,
\qquad
\dot w=-w.
\]
Suppose that the system is analytically linearizable on a
Zariski-dense open subset of the parameter family. Then all
linearization quantities vanish identically on the whole family.
Consequently, the system is analytically linearizable at every
parameter value of the family.
\end{lemma}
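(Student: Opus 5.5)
The plan is to show that each linearization quantity, viewed as a function on the family, is a polynomial (in particular regular/analytic) function of the parameters. We then use Zariski density to force it to vanish everywhere. Finally, we pass from the vanishing of all quantities to an actual convergent linearization through the classical result that, for the $1:-1$ resonant case, a formal linearization plus an analytic first integral gives analytic linearizability.

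\emph{Step 1 (polynomial dependence).} Write the family as an irreducible affine variety $\mathcal V$ in coefficient space. The homological equations determining the formal transformation $Z=z+\sum p_{jk}z^jw^k$, $W=w+\sum q_{jk}z^jw^k$ are solved degree by degree. Each nonresonant coefficient is obtained by dividing by an integer weight $j-k\mp1\neq0$. Hence every $p_{jk}$, $q_{jk}$ and every $L_m^{\pm}$ is a polynomial with rational coefficients in the system coefficients (and $\varepsilon$), computed with the normalization that resonant coefficients of the transformation are set to zero.

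\emph{Step 2 (vanishing of the quantities).} On the Zariski-dense open subset $\mathcal U\subset\mathcal V$, the system is analytically linearizable. An analytic linearization is in particular formal. The uniqueness part of normal form theory then forces $L_m^{\pm}=0$ at those parameters: if some first nonzero $L_m^{\pm}$ existed, the formal normal form $\dot Z=Z+L_m^+Z^{m+1}W^m+\cdots$ would be formally conjugate to the linear one. This is impossible, because the lowest resonant term is invariant under near-identity changes that preserve lower-order linearity. Thus each polynomial $L_m^{\pm}$ vanishes on $\mathcal U$. Since $\mathcal V$ is irreducible and $\mathcal U$ is dense, and zero sets of polynomials are Zariski closed, $L_m^{\pm}\equiv0$ on $\mathcal V$.

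\emph{Step 3 (from formal to analytic).} At any parameter in $\mathcal V$, all $L_m^{\pm}$ vanish, so the system is formally linearizable. In particular, $H=ZW$ is a formal first integral, so all singular point quantities vanish. By the Poincaré–Lyapunov theorem for the $1:-1$ resonance (Mattei–Moussu / Brjuno: in dimension two a resonant saddle with a formal first integral has an analytic one), there is an analytic first integral $H=zw+O(3)$. The formal linearization then converges: with the analytic $H$, the vector field is orbitally linearizable analytically, and formal linearizability without time change means the time-rescaling function $h(H)$ with $X= h(H)\,X_{\mathrm{lin}}$ is formally, hence analytically, identically $1$. Alternatively, one can construct an analytic $V=w+O(2)$ with $XV=-V$ from the analytic normal form and invoke Lemma~\ref{lem_first_integral_linearization}.

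The main obstacle is Step 3, the convergence. I would cite the known result that for a resonant $1:-1$ saddle, integrability (analytic first integral) together with the vanishing of all linearization quantities implies analytic linearizability. The ingredients are: analytic normalization to $\dot Z=Z(1+\phi(ZW))$, $\dot W=-W(1+\psi(ZW))$, where integrability forces $\phi=\psi$, and the vanishing of the $L_m^{\pm}$ forces $\phi=0$.
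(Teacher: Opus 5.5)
Your Steps 1 and 2 are essentially the paper's argument. The obstructions are regular functions of the parameters, because every division in the recursion is by a fixed nonzero integer. They vanish on the dense set, hence everywhere. Your remark that the first nonzero resonant term is invariant usefully justifies a point the paper leaves implicit: analytic linearizability forces the \emph{normalized} obstructions to vanish.

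Step 3 takes a genuinely different route to convergence. The paper applies Brjuno's theorem directly. All nonzero divisors $i-j-\lambda_r$ are integers of modulus at least $1$, so condition $\omega$ holds, and the formal normal form is linear, so Condition A holds. This needs neither a first integral nor two-dimensional structure. You go through integrability instead. Formal linearization gives the formal first integral $ZW$; Mattei--Moussu (equivalently the Poincar\'e--Lyapunov theorem) upgrades it to an analytic $H=zw+O(3)$; what remains is elementary. Your route is specific to planar $1:-1$ saddles. In exchange, the deep input reduces to the integrability theorem already behind the center results, and it matches how Section~\ref{sec_cubic_iso} uses Lemma~\ref{lem_first_integral_linearization}. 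The paper's route is a single citation valid far more generally.

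One step needs tightening. Analytic Morse coordinates with $H=ZW$ only give $X=g(Z,W)X_{\mathrm{lin}}$, where $g$ is an arbitrary analytic germ with $g(0)=1$. They do not give $g=h(H)$. You need one more tangent-to-identity change.
\begin{itemize}
\item Put $\tilde Z=Ze^{\phi}$, $\tilde W=We^{-\phi}$. This preserves $ZW$ and replaces $g$ by $g(1+X_{\mathrm{lin}}\phi)$.
\item Let $D(ZW)$ be the diagonal part of $1/g$, and set $k=1/D$, which is analytic with $k(0)=1$.
\item Solve $X_{\mathrm{lin}}\phi=k(ZW)/g-1$. The right-hand side has no monomials $Z^iW^i$, so $\phi=\sum r_{ij}Z^iW^j/(i-j)$ converges.
\item Now $X=k(ZW)X_{\mathrm{lin}}$. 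If $k=1+k_m(ZW)^m+\cdots$ with $k_m\neq0$, then $k_m$ would be the first nonzero linearization quantity. That quantity is invariant, which contradicts formal linearizability, so $k\equiv1$.
\end{itemize}

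Relatedly, the logic in your last paragraph runs backwards. A non-integrable $1:-1$ saddle generally has no convergent normalization to $\dot Z=Z(1+\phi(ZW))$, $\dot W=-W(1+\psi(ZW))$. It is integrability that makes such a normalization convergent, and it automatically gives $\phi=\psi$.
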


\begin{proof}
Seek a tangent-to-the-identity formal change of variables degree by degree.  At
degree $n$, the homological operator on a monomial $z^iw^je_r$ has eigenvalue
\[
 \langle(i,j),(1,-1)\rangle-\lambda_r=i-j-\lambda_r,
 \qquad \lambda_1=1,\quad\lambda_2=-1.
\]
All nonzero eigenvalues are nonzero integers and can therefore be inverted
without introducing parameter denominators.  After the lower-degree equations
have been solved, projection of the degree-$n$ right-hand side onto the kernel
of the homological operator gives finitely many resonant obstruction functions
$B_{n,\alpha}\in\mathcal O(S)$.  Their coefficients are regular functions of
$s$ because the original coefficients are regular and every division made in
the recursion is by a fixed nonzero integer.

Analytic linearizability on the dense subset implies formal linearizability
there, so every $B_{n,\alpha}$ vanishes on a Zariski-dense subset of $S$.
Since $S$ is reduced and irreducible, each obstruction is identically zero in
$\mathcal O(S)$.  The degree-by-degree recursion consequently produces a
formal linearization of $X_s$ for every $s\in S$.

It remains to justify convergence.  In the notation of Brjuno's analytic
normalization theorem, put
\[
 \omega_k=\min\bigl\{
 |i-j-\lambda_r|:\ 2\le i+j<2^k,\ r=1,2,
 \ i-j-\lambda_r\ne0\bigr\}.
\]
Here $\omega_k\ge1$ for every $k$; hence
\[
 \sum_{k\ge1}2^{-k}\log(\omega_k^{-1})=0.
\]
Thus Brjuno's arithmetic condition $\omega$ holds.  Moreover, the formal
normal form just obtained is exactly $Ax$, so Brjuno's Condition A holds with
scalar factor $f\equiv1$.  Brjuno's convergence theorem therefore turns the
formal linearization at each fixed $s$ into a holomorphic one; see
\cite{Bruno1971}.  No uniform neighborhood or parameter-holomorphic choice of
the conjugacy is needed.  Notice also that the dense-set hypothesis must give
an all-order linearization: vanishing through any fixed finite order would not
justify this argument.
\end{proof}

For several parameter conditions, one of the two equations becomes
autonomous. The following elementary formula gives the corresponding
one-dimensional analytic linearization.

\begin{lemma}\label{lem_one_dim_linearization}
Let
\[
\dot u=\sigma u f(u),
\qquad
\sigma\in\{1,-1\},
\qquad
f(0)=1,
\]
where $f$ is analytic near the origin. Then
\begin{equation}\label{eq:one_dim_linearization}
\Phi(u)
=
u\exp\left(
\int_0^u
\frac{f(s)^{-1}-1}{s}\,ds
\right)
\end{equation}
is analytic near $u=0$, satisfies
\[
\Phi(u)=u+O(u^2),
\]
and transforms the equation into
\[
\dot\Phi=\sigma\Phi.
\]
\end{lemma}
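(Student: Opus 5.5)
We prove Lemma~\ref{lem_one_dim_linearization} by direct verification in two steps: first show that the exponent in \eqref{eq:one_dim_linearization} is analytic, then check the differential identity.

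\textbf{Analyticity.} Since $f$ is analytic near $0$ with $f(0)=1$, the function $f(s)^{-1}$ is analytic on a disc $|s|<\rho$ where $f$ does not vanish, and $f(s)^{-1}-1=O(s)$. Hence
\[
g(s)=\frac{f(s)^{-1}-1}{s}
\]
has a removable singularity at $s=0$ and extends analytically to the disc. Its primitive
\[
G(u)=\int_0^u g(s)\,ds
\]
is then analytic with $G(0)=0$. Consequently $\Phi(u)=u\,e^{G(u)}$ is analytic and satisfies $\Phi(u)=u+O(u^2)$. Because $\Phi'(0)=1$, $\Phi$ is also a local analytic change of coordinates.

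\textbf{The identity $\dot\Phi=\sigma\Phi$.} Differentiate to get
\[
\Phi'(u)=e^{G(u)}\bigl(1+u\,g(u)\bigr)=e^{G(u)}f(u)^{-1}.
\]
Along solutions of $\dot u=\sigma u f(u)$ the chain rule then gives
\[
\dot\Phi=\Phi'(u)\,\dot u=e^{G(u)}f(u)^{-1}\,\sigma u f(u)=\sigma u\,e^{G(u)}=\sigma\Phi.
\]
This holds in a full neighborhood of $u=0$, including $u=0$ itself, since every function involved is analytic there. No step here is a genuine obstacle; the only point requiring care is the removable singularity of $g$ at the origin, which is exactly what $f(0)=1$ guarantees.
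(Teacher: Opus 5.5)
Your proof is correct and follows essentially the same route as the paper: the removable singularity of $(f(s)^{-1}-1)/s$ at $s=0$ gives analyticity and $\Phi(u)=u+O(u^2)$, and the chain rule then gives $\dot\Phi=\sigma\Phi$. The only cosmetic difference is that you compute $\Phi'(u)=e^{G(u)}f(u)^{-1}$ directly, whereas the paper uses the logarithmic derivative $\Phi'/\Phi=1/(uf)$; your version has the small advantage of not dividing by $\Phi$, so it holds at $u=0$ without a separate continuity remark.
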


\begin{proof}
Since $f(0)=1$, the function
\[
\frac{f(u)^{-1}-1}{u}
\]
has a removable singularity at $u=0$ and is analytic there. Hence,
$\Phi$ in \eqref{eq:one_dim_linearization} is analytic near the origin
and satisfies
\[
\Phi(u)=u+O(u^2).
\]
Moreover,
\[
\frac{\Phi'(u)}{\Phi(u)}
=
\frac{1}{u}
+
\frac{f(u)^{-1}-1}{u}
=
\frac{1}{uf(u)}.
\]
Therefore,
\[
\dot\Phi
=
\Phi'(u)\dot u
=
\sigma\Phi.
\]
\end{proof}

Finally, the following observation allows us to avoid repeating the
proofs for parameter conditions obtained by exchanging the two
coefficient sets.

\begin{lemma}\label{lem_exchange}
Let
\[
\sigma(z,w)=(w,z),
\]
and let $X^{e}$ denote the vector field obtained from $X$ by exchanging
the two coefficient sets $a_{ij}$ and $b_{ij}$. Then
\begin{equation}\label{eq:exchange_vector_field}
X^{e}=-\sigma_*X.
\end{equation}
Consequently, both the weakly persistent center property and the
weakly persistent isochronous center property are preserved under the
exchange
\[
a_{ij}\longleftrightarrow b_{ij}.
\]
\end{lemma}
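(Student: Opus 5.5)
The plan is a direct computation: apply the swap $\sigma(z,w)=(w,z)$ to the vector field $X$ of system~\eqref{sys_general} and compare with $X^{e}$. Write
\[
X=\bigl(z+\varepsilon A(z,w)\bigr)\partial_z-\bigl(w+\varepsilon B(z,w)\bigr)\partial_w ,
\]
where
\[
A=a_{20}z^2+a_{11}zw+a_{02}w^2+a_{30}z^3+a_{21}z^2w+a_{12}zw^2+a_{03}w^3
\]
and $B$ is the analogous polynomial with the roles of $z$ and $w$ reversed, that is,
\[
B(z,w)=b_{20}w^2+b_{11}zw+b_{02}z^2+\cdots
\]
The key observation is $B(z,w)=A_b(w,z)$, where $A_b$ denotes $A$ with each $a_{ij}$ replaced by $b_{ij}$.

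For the push-forward, set new coordinates $(\tilde z,\tilde w)=(w,z)$. Then along solutions
\[
\dot{\tilde z}=\dot w=-\bigl(\tilde z+\varepsilon B(\tilde w,\tilde z)\bigr)=-\bigl(\tilde z+\varepsilon A_b(\tilde z,\tilde w)\bigr),
\]
\[
\dot{\tilde w}=\dot z=\tilde w+\varepsilon A(\tilde w,\tilde z)=\tilde w+\varepsilon B_a(\tilde z,\tilde w),
\]
where $B_a$ is $B$ with $b_{ij}$ replaced by $a_{ij}$. Multiplying by $-1$ gives
\[
-\sigma_*X=\bigl(\tilde z+\varepsilon A_b\bigr)\partial_{\tilde z}-\bigl(\tilde w+\varepsilon B_a\bigr)\partial_{\tilde w},
\]
which is exactly $X^{e}$. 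This proves \eqref{eq:exchange_vector_field}. The only real care needed is the bookkeeping of the index convention, namely that $b_{ij}$ multiplies $w^iz^j$ in the second equation. I regard this as the single point to check, and it holds term by term for degrees two and three.

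For the consequences, let $H(z,w)=zw+O(3)$ be an analytic first integral of $X$ for a given $\varepsilon$. Then $H\circ\sigma$ satisfies $X^{e}(H\circ\sigma)=-(XH)\circ\sigma=0$, and $H\circ\sigma=wz+O(3)$ still has the required form. The vanishing of all singular point quantities transfers in the same way. Since the relation holds for every $\varepsilon$ with the same $\varepsilon$, the weakly persistent center property is preserved.

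For isochronicity, suppose $(U,V)$ satisfies $XU=U$ and $XV=-V$. Set $\tilde U=V\circ\sigma=z+O(2)$ and $\tilde V=U\circ\sigma=w+O(2)$. Then
\[
X^{e}\tilde U=-(XV)\circ\sigma=V\circ\sigma=\tilde U,
\]
and similarly $X^{e}\tilde V=-\tilde V$. So $(\tilde U,\tilde V)$ is a time-preserving analytic linearization of $X^{e}$, and no time rescaling is needed beyond the overall sign already absorbed. Because the exchange is an involution, the converse direction follows by symmetry.
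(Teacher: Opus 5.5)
Your proposal is correct and matches the paper's proof: both set $H^{e}=H\circ\sigma$, $U^{e}=V\circ\sigma$, $V^{e}=U\circ\sigma$ and use $X^{e}=-\sigma_*X$ to get $X^{e}H^{e}=0$, $X^{e}U^{e}=U^{e}$ and $X^{e}V^{e}=-V^{e}$. The only difference is that the paper asserts the identity $X^{e}=-\sigma_*X$ without checking it, whereas you verify it through the bookkeeping $B(z,w)=A_b(w,z)$, which is a useful addition.
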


\begin{proof}
Suppose first that the origin is a weakly persistent center of $X$.
Then, for every admissible value of the common perturbation parameter,
there exists an analytic first integral
\[
H(z,w)=zw+O(3)
\]
satisfying $XH=0$. Define
\[
H^{e}(z,w)=H(w,z).
\]
By \eqref{eq:exchange_vector_field},
\[
X^{e}H^{e}
=
-(XH)(w,z)
=
0.
\]
Moreover,
\[
H^{e}(z,w)=zw+O(3).
\]
Hence, the exchanged system also has a weakly persistent center.

Next, suppose that the origin is a weakly persistent isochronous
center of $X$. Then, for every admissible value of the common
perturbation parameter, there exist analytic functions
\[
U=z+O(2),
\qquad
V=w+O(2),
\]
satisfying
\[
XU=U,
\qquad
XV=-V.
\]
Define
\[
U^{e}(z,w)=V(w,z),
\qquad
V^{e}(z,w)=U(w,z).
\]
Clearly,
\[
U^{e}=z+O(2),
\qquad
V^{e}=w+O(2).
\]
Using \eqref{eq:exchange_vector_field}, we obtain
\[
X^{e}U^{e}
=
-(XV)(w,z)
=
V(w,z)
=
U^{e},
\]
and
\[
X^{e}V^{e}
=
-(XU)(w,z)
=
-U(w,z)
=
-V^{e}.
\]
Therefore,
\[
X^{e}U^{e}=U^{e},
\qquad
X^{e}V^{e}=-V^{e}.
\]
Thus, the exchanged system is analytically linearizable without any
reparametrization of the time variable. Hence, the weakly persistent
isochronous center property is also preserved under the exchange.

In addition to the results mentioned above, generalized Liénard systems have also been extensively studied in the literature; see, for example, \cite{gasull1998,lienard,CHERKAS2006,cherkas1995}. 

In particular, Le Van Linh and Sadovskii \cite{van2003} established the following criterion for generalized Liénard systems.

\begin{thm}
Consider the generalized Liénard system
$$
\frac{dx}{dt}=y,\qquad 
\frac{dy}{dt}=\sum_{i=0}^{3} p_i(x) y^i .
$$
Then the critical point $O(0,0)$ is a center if and only if either
$$
F_1(x)=F_1(y),\quad F_2(x)=F_2(y),
$$
or
$$
F_1(x)=F_1(y),\quad F_3(x)=F_3(y),
$$
where
$$
F_1(x) = \frac{Q_2^3(x)}{Q_1^5(x)}, \quad
F_2(x) = \frac{Q_3^3(x)}{Q_1^7(x)},\quad
F_3(x)=\frac{Q_4(x)}{Q_1^3(x)},
$$
with
\begin{align*}
 Q_1(x) = & 2 p_1(x)^3 - 9 p_0(x) p_1(x) p_2(x) + 27 p_0(x)^2 p_3(x) + 9 p_1(x) p_0'(x) - 9 p_0(x) p_1'(x),\\
  R (x)= &p_1(x)^2 - 3 p_0(x) p_2(x) + 3 p_0'(x),\quad Q_2(x) = Q_1(x) R(x) - p_0(x) Q_1'(x),\\
  Q_3(x) =& 5 Q_2(x) R(x) - 3 p_0(x) Q_2'(x), \quad
Q_4 (x)= 7 Q_3(x) R(x) - 3 p_0(x) Q_3'(x). 
\end{align*}
Moreover, the above equations admit a solution $y=\varphi(x)$, where $\varphi(x)$ is analytic in a neighborhood of $x=0$ and satisfies
$$
\varphi(0)=0,\qquad \varphi'(0)=-1.
$$
The case where one or both systems reduce to identities is also included.
\end{thm}

This result provides an effective criterion for determining the center of generalized Liénard systems. In the following, for systems that can be transformed into the generalized Liénard form, we will apply this criterion to verify the center property at the origin.
\end{proof}
\section{Weakly persistent centers in the general complex cubic system} \label{sec_general_cubic_center}

\subsection{Weakly persistent center conditions}
\label{subsec_general_cubic_center_conditions}

We now consider system~\eqref{sys_general}. By computing the singulart
point quantities and analyzing the resulting polynomial relations, the finite collection of necessary conditions is decomposed completely into candidate algebraic components, and then every surviving component is proved sufficient by an independent analytic argument. We obtain a complete characterization of its weakly persistent centers.
The main result is stated as follows.

\begin{thm}\label{thm:cubic_center}
The origin of system~\eqref{sys_general} is a weakly persistent center
if and only if
\[
a_{21}=b_{21},
\qquad
a_{11}a_{20}=b_{11}b_{20},
\]
and one of the following conditions holds:
\begin{align*}
\text{F$_{01}$}\quad
& b_{11}-2a_{20}
=a_{11}-2b_{20}
=b_{12}-3a_{30}
=a_{12}-3b_{30}
=0;
\\
\text{F$_{02}$}\quad
& \begin{aligned}[t]
&\text{there exists }h\in\mathbb C,\quad h\neq0,\quad\text{such that}\\
&b_{20}-ha_{20}=hb_{11}-a_{11}=h^3b_{02}-a_{02}=b_{30}-h^2a_{30}=h^2b_{12}-a_{12}
 =h^4b_{03}-a_{03}\\
&\quad \ =0;
\end{aligned}
\\
\text{F$_{03}$}\quad
& a_{11}=b_{11}=a_{21}=a_{03}=b_{03}=a_{12}+b_{30}=b_{12}+a_{30}=0;
\\
\text{F$_{04}$}\quad
& \begin{aligned}[t]
&\text{there exists }t\in\mathbb C\text{ such that}\\
&a_{02}=b_{02}=a_{03}=b_{03}=a_{11}-tb_{20}=b_{11}-ta_{20}=a_{12}-(2t-1)b_{30}\\
& \quad  \ =b_{12}-(2t-1)a_{30}=0;
\end{aligned}
\\ 
\text{F$_{05}$}\quad
& a_{20}=b_{20}=a_{02}=b_{02}=a_{30}=b_{30}=a_{03}=b_{03}=0;
\\ 
\text{F$_{06}$(a)}\quad
& a_{20}=b_{11}=b_{02}=a_{30}=b_{12}=b_{03}=0;
\\ 
\text{F$_{06}$(b)}\quad
& b_{20}=a_{11}=a_{02}=b_{30}=a_{12}=a_{03}=0;
\\ 
\end{align*}
\begin{align*}
\text{F$_{07}$(a)}\quad
& a_{11}=a_{02}=a_{12}=a_{03}=a_{21}=b_{11}=b_{30}=0;
\\ 
\text{F$_{07}$(b)}\quad
& b_{11}=b_{02}=b_{12}=b_{03}=b_{21}=a_{11}=a_{30}=0;
\\ 
\text{F$_{08}$}\quad
& a_{11}=a_{02}=a_{21}=a_{12}=a_{03}=b_{11}=b_{02}=b_{12}=b_{03}=0;
\\
\text{F$_{09}$}\quad
& \begin{aligned}[t]
&a_{20}b_{20}\neq0, \enspace a_{30}=a_{21}=a_{12}=a_{03}
=b_{30}=b_{03}=b_{12}=2a_{11}+b_{20}=2b_{11}+a_{20}\\
&\quad \ =4a_{02}b_{02}-a_{20}b_{20}=0;
\end{aligned}
\\ 
\text{F$_{10}$}\quad
& \begin{aligned}[t]
&a_{20}b_{20}\neq0, \enspace 
2a_{11}+b_{20}=2b_{11}+a_{20}=2a_{02}a_{20}+b_{20}^2=2b_{02}b_{20}+a_{20}^2=a_{12}-3b_{30}\\
& \quad \ =b_{12}-3a_{30}=2a_{03}a_{20}^3
+3a_{20}^2b_{20}b_{30}-a_{30}b_{20}^3=2b_{03}b_{20}^3-a_{20}^3b_{30}
+3a_{20}a_{30}b_{20}^2\\
& \quad \ =2a_{20}a_{21}b_{20}+3a_{20}^2b_{30}
+3a_{30}b_{20}^2=0;
\end{aligned}
\\
\text{F$_{11}$}\quad
& \begin{aligned}[t]
&a_{20}b_{20}\neq0, \enspace
4a_{11}-b_{20}=4b_{11}-a_{20}
=4a_{02}a_{20}-b_{20}^2
=4b_{02}b_{20}-a_{20}^2
=a_{12}-3b_{30}\\
&\quad\ 
=b_{12}-3a_{30}
=2a_{03}a_{20}^3+3a_{20}^2b_{20}b_{30}-a_{30}b_{20}^3=2b_{03}b_{20}^3-a_{20}^3b_{30}
+3a_{20}a_{30}b_{20}^2\\
&\quad\
=2a_{20}a_{21}b_{20}
-3a_{20}^2b_{30}-3a_{30}b_{20}^2=a_{20}^4b_{30}^2
-4a_{20}^2a_{30}b_{20}^2b_{30}
+a_{30}^2b_{20}^4=0;
\end{aligned}
\\
\text{F$_{12}$}\quad
& \begin{aligned}[t]
&\text{there exist }r,\tau\in\mathbb C
\text{ such that }a_{20}b_{20}r(7\tau-2)\neq0,\\
&a_{11}=b_{11}
=a_{02}a_{20}r^2(7\tau-2)-b_{20}^2(6r\tau-2r-\tau)\\
&\quad\
=b_{02}b_{20}(7\tau-2)-a_{20}^2r(-r\tau+6\tau-2) =a_{30}b_{20}^2-a_{20}^2b_{30}r^2=a_{12}-b_{30}(6\tau-3)
\\
&\quad\
=b_{12}-a_{30}(6\tau-3)=a_{03}a_{20}r-b_{20}b_{30}\tau=b_{03}b_{20}^3-a_{20}^3b_{30}r^3\tau
\\
&\quad\ =a_{21}b_{20}-a_{20}b_{30}r(5\tau-2)=0;
\end{aligned}
\\
\text{F$_{13}$}\quad
& \begin{aligned}[t]
&\text{there exist }c,e\in\mathbb C
\text{ such that }a_{20}b_{20}ce(ce^2-1)\neq0,\\
&a_{11}=b_{11}=a_{12}=b_{12}
=a_{02}a_{20}(ce^2-1)-b_{20}^2e(e-1)\\
& \quad \ =b_{02}b_{20}(ce^2-1)-a_{20}^2ce(ce-1)
=a_{30}b_{20}^2-a_{20}^2b_{30}c=a_{03}a_{20}-b_{20}b_{30}e
\\
&\quad \ =b_{03}b_{20}^3-a_{20}^3b_{30}c^2e
=a_{21}b_{20}-a_{20}b_{30}ce=0;
\end{aligned}
\\
\text{F$_{14}$}\quad
& \begin{aligned}[t]
&a_{20}b_{20}\neq0, \enspace
a_{11}=b_{11}=a_{12}=b_{12}
=a_{02}a_{20}^3+b_{02}b_{20}^3-a_{20}^2b_{20}^2=a_{30}b_{20}^2-a_{20}^2b_{30}
\\
&\quad\
=a_{03}a_{20}-b_{20}b_{30}=b_{03}b_{20}-a_{20}a_{30}=a_{20}a_{21}-a_{30}b_{20}=0;
\end{aligned}
\\ 
\text{F$_{15}$}\quad
& \begin{aligned}[t]
&a_{20}b_{20}\neq0, \enspace
a_{11}=b_{11}
=a_{02}a_{20}^3+b_{02}b_{20}^3+2a_{20}^2b_{20}^2=a_{30}b_{20}^2-a_{20}^2b_{30}
=7a_{12}+9b_{30}\\
&\quad\
=7b_{12}+9a_{30}=7a_{03}a_{20}+2b_{20}b_{30}
=7b_{03}b_{20}+2a_{20}a_{30}
=7a_{20}a_{21}-4a_{30}b_{20}=0;
\end{aligned}
\\ 
\text{F$_{16}$}\quad
& \begin{aligned}[t]
&a_{20}b_{20}\neq0, \enspace
a_{11}=b_{11}
=a_{02}b_{02}-a_{20}b_{20}
=a_{02}a_{30}+a_{20}b_{30}=a_{12}+b_{30}
=b_{12}+a_{30}\\
&\quad\
=a_{03}a_{20}^2-a_{02}^2b_{03}
=a_{20}a_{21}-a_{02}b_{03}=0;
\end{aligned}
\\ 
\text{F$_{17}$}\quad
& \begin{aligned}[t]
&\text{there exists }r\in\mathbb C
\text{ such that }a_{20}b_{20}r\neq0,\\
&a_{11}=b_{11}=a_{30}=b_{30}
=a_{02}a_{20}r^2-b_{20}^2(2r+1)
=b_{02}b_{20}-a_{20}^2r(r+2)\\
&\quad\
=b_{12}b_{20}^2-a_{12}a_{20}^2r^2=2a_{03}a_{20}r+a_{12}b_{20}=2b_{03}b_{20}^3+a_{12}a_{20}^3r^3
\\
&\quad\ =2a_{21}b_{20}-3a_{12}a_{20}r=0;
\end{aligned}
\\ 
\text{F$_{18}$}\quad
& \begin{aligned}[t]
&\text{there exists }r\in\mathbb C
\text{ such that }a_{20}b_{20}r\neq0,\\
&a_{11}=b_{11}=a_{30}=b_{30}
=7a_{02}a_{20}r^2-b_{20}^2(6r-1)
=7b_{02}b_{20}-a_{20}^2r(6-r)\\
&\quad\
=b_{12}b_{20}^2-a_{12}a_{20}^2r^2=6a_{03}a_{20}r-a_{12}b_{20}=6b_{03}b_{20}^3-a_{12}a_{20}^3r^3
\\
&\quad\ =6a_{21}b_{20}-5a_{12}a_{20}r=0;
\end{aligned}
\\ 
\text{F$_{19}$(a)}\quad
& \begin{aligned}[t]
&a_{20}=a_{02}=a_{21}=a_{12}=a_{03}
=b_{11}=b_{30}=b_{03}
=2a_{11}+b_{20}
=b_{12}+2a_{30}=0;
\end{aligned}
\\ 
\text{F$_{19}$(b)}\quad
& \begin{aligned}[t]
&b_{20}=b_{02}=b_{21}=b_{12}=b_{03}
=a_{11}=a_{30}=a_{03}
=2b_{11}+a_{20}
=a_{12}+2b_{30}=0;
\end{aligned}
\\ 
\text{F$_{20}$(a)}\quad
& \begin{aligned}[t]
&a_{20}=a_{02}=a_{30}=a_{21}=a_{12}=a_{03}
=b_{11}=b_{02}=b_{30}=b_{12}
=3a_{11}+b_{20}=0;
\end{aligned}
\\ 
\text{F$_{20}$(b)}\quad
& \begin{aligned}[t]
&b_{20}=b_{02}=b_{30}=b_{21}=b_{12}=b_{03}
=a_{11}=a_{02}=a_{30}=a_{12}
=3b_{11}+a_{20}=0;
\end{aligned}
\\ 
\text{F$_{21}$(a)}\quad
& \begin{aligned}[t]
&a_{20}=a_{02}=a_{30}=a_{21}=a_{03}
=b_{11}=b_{02}=b_{12}
=3a_{11}-b_{20}
=3a_{12}+b_{30}=0;
\end{aligned}
\end{align*}
\begin{align*}
\text{F$_{21}$(b)}\quad
& \begin{aligned}[t]
&b_{20}=b_{02}=b_{30}=b_{21}=b_{03}
=a_{11}=a_{02}=a_{12}
=3b_{11}-a_{20}
=3b_{12}+a_{30}=0;
\end{aligned}
\\ 
\text{F$_{22}$(a)}\quad
& \begin{aligned}[t]
&b_{20}\neq0, \enspace
a_{20}=a_{11}=b_{11}=b_{02}=a_{03}=b_{03}
=b_{20}^2b_{30}-a_{02}^2a_{30}
=a_{12}+3b_{30}
\\
&\quad \ =b_{12}+3a_{30}
=a_{21}b_{20}+2a_{02}a_{30}=0;
\end{aligned}
\\ 
\text{F$_{22}$(b)}\quad
& \begin{aligned}[t]
&a_{20}\neq0, \enspace
b_{20}=b_{11}=a_{11}=a_{02}=b_{03}=a_{03}
=a_{20}^2a_{30}-b_{02}^2b_{30}
=b_{12}+3a_{30}
\\
&\quad \ =a_{12}+3b_{30}
=a_{20}b_{21}+2b_{02}b_{30}=0;
\end{aligned}
\\ 
\text{F$_{23}$(a)}\quad
& \begin{aligned}[t]
&b_{02}b_{20}\neq0, \enspace
a_{20}=a_{11}=b_{11}=a_{02}
=b_{02}b_{30}+a_{30}b_{20}
=a_{12}+b_{30}
 =b_{12}+a_{30}\\
&\quad \
=b_{03}b_{20}^2-a_{03}b_{02}^2
=a_{21}b_{20}-a_{03}b_{02}=0;
\end{aligned}
\\ 
\text{F$_{23}$(b)}\quad
& \begin{aligned}[t]
&a_{02}a_{20}\neq0, \enspace
b_{20}=b_{11}=a_{11}=b_{02}
=a_{02}a_{30}+a_{20}b_{30}
=b_{12}+a_{30}
 =a_{12}+b_{30}
\\
&\quad \ =a_{03}a_{20}^2-a_{02}^2b_{03}
=a_{20}b_{21}-a_{02}b_{03}=0;
\end{aligned}
\\ 
\text{F$_{24}$(a)}\quad
& \begin{aligned}[t]
&a_{02}b_{20}\neq0, \enspace
a_{20}=a_{11}=b_{11}=a_{30}=b_{30}
=a_{02}^2b_{02}-4b_{20}^3
=a_{02}^2b_{12}-4a_{12}b_{20}^2
\\
&\quad \ =4a_{03}b_{20}+a_{02}a_{12}
=a_{02}b_{03}+b_{12}b_{20}
=a_{02}a_{21}-3a_{12}b_{20}=0;
\end{aligned}
\\ 
\text{F$_{24}$(b)}\quad
& \begin{aligned}[t]
&a_{20}b_{02}\neq0, \enspace
b_{20}=b_{11}=a_{11}=b_{30}=a_{30}
=a_{02}b_{02}^2-4a_{20}^3
=a_{12}b_{02}^2-4a_{20}^2b_{12}
\\
&\quad \ =4a_{20}b_{03}+b_{02}b_{12}
=a_{03}b_{02}+a_{12}a_{20}
=b_{02}b_{21}-3a_{20}b_{12}=0;
\end{aligned}
\\ 
\text{F$_{25}$(a)}\quad
& \begin{aligned}[t]
&\text{there exists }r\in\mathbb C
\text{ such that }a_{02}b_{20}r(r-1)\neq0,\\
&a_{20}=a_{11}=b_{11}=a_{12}=b_{12}
=a_{02}^2b_{02}-b_{20}^3r^2(r-1)=a_{02}^2a_{30}-b_{20}^2b_{30}r(r-1)
\\
&\quad\ =a_{03}b_{20}(r-1)+a_{02}b_{30}=a_{02}b_{03}+b_{02}b_{30}
=a_{02}a_{21}+b_{20}b_{30}r=0;
\end{aligned}
\\
\text{F$_{25}$(b)}\quad
& \begin{aligned}[t]
&\text{there exists }r\in\mathbb C
\text{ such that }a_{20}b_{02}r(r-1)\neq0,\\
&b_{20}=b_{11}=a_{11}=b_{12}=a_{12}
=a_{02}b_{02}^2-a_{20}^3r^2(r-1)=b_{02}^2b_{30}-a_{20}^2a_{30}r(r-1)\\
&\quad\
=a_{20}b_{03}(r-1)+a_{30}b_{02}=a_{03}b_{02}+a_{02}a_{30}
=b_{02}b_{21}+a_{20}a_{30}r=0;
\end{aligned}
\\
\text{F$_{26}$(a)}\quad
& \begin{aligned}[t]
&\text{there exists }r\in\mathbb C
\text{ such that }a_{02}b_{20}r(r-1)\neq0,\\
&a_{20}=a_{11}=b_{11}
=a_{02}^2b_{02}-b_{20}^3r^2(r-1)
=a_{02}^2a_{30}-b_{20}^2b_{30}r^2=a_{02}^2b_{12}-a_{12}b_{20}^2r^2\\
&\quad\
=a_{12}(7r-6)-b_{30}(6-9r)=6a_{03}b_{20}r
+a_{02}(7a_{12}+9b_{30})(r-1)\\
&\quad\ =6a_{02}^3b_{03}
+b_{20}^3r^3(7a_{12}+9b_{30})(r-1)=6a_{02}a_{21}
+b_{20}r(1-2r)(7a_{12}+9b_{30})=0;
\end{aligned}
\\
\text{F$_{26}$(b)}\quad
& \begin{aligned}[t]
&\text{there exists }r\in\mathbb C
\text{ such that }a_{20}b_{02}r(r-1)\neq0,\\
&b_{20}=b_{11}=a_{11}
=a_{02}b_{02}^2-a_{20}^3r^2(r-1)
=b_{02}^2b_{30}-a_{20}^2a_{30}r^2=a_{12}b_{02}^2-a_{20}^2b_{12}r^2\\
&\quad\
=b_{12}(7r-6)-a_{30}(6-9r)=6a_{20}b_{03}r
+b_{02}(9a_{30}+7b_{12})(r-1)\\
&\quad\ =6a_{03}b_{02}^3
+a_{20}^3r^3(9a_{30}+7b_{12})(r-1)=6b_{02}b_{21}
+a_{20}r(1-2r)(9a_{30}+7b_{12})=0;
\end{aligned}
\\
\text{F$_{27}$(a)}\quad
& \begin{aligned}[t]
&a_{20}=a_{11}=a_{02}=a_{30}=a_{21}=a_{03}
=b_{20}=b_{11}=b_{12}=b_{03}
=2a_{12}+b_{30}=0;
\end{aligned}
\\
\text{F$_{27}$(b)}\quad
& \begin{aligned}[t]
&b_{20}=b_{11}=b_{02}=b_{30}=b_{21}=b_{03}
=a_{20}=a_{11}=a_{12}=a_{03}
=2b_{12}+a_{30}=0;
\end{aligned}
\\
\text{F$_{28}$}\quad
& \begin{aligned}[t]
&a_{30}b_{30}\neq0, \enspace
a_{20}=a_{11}=a_{02}=b_{20}=b_{11}=b_{02}=a_{21}=3a_{12}+b_{30}
=3b_{12}+a_{30}\\
&\quad\
=9a_{03}b_{03}-4a_{30}b_{30}=0.
\end{aligned}
\end{align*}
\end{thm}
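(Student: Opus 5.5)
The proof has two halves: necessity by elimination, and sufficiency component by component. Lemma~\ref{lem_exchange} handles every (a)/(b) pair.

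\textbf{Necessity.} I will compute the singular point quantities $g_m(\varepsilon)$ of system~\eqref{sys_general} by the recursive algorithm of Section~\ref{sec_pre}, up to some order $m\le M$. I expect $M$ to be around $7$ or $8$; it is fixed by when the decomposition stabilizes. Because the nonlinear part carries the factor $\varepsilon$, each $g_m$ is a polynomial in $\varepsilon$. Moreover, the degree-$2$ terms carry weight $\varepsilon$ and the degree-$3$ terms carry weight $\varepsilon$, so grading by $\varepsilon$ separates contributions of different ``quadratic/cubic content''. I will then split every $g_m$ into its coefficients $G_{m,k}$.

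The lowest coefficients should be the first conditions. I expect $G_{1,1}$ to be proportional to $a_{21}-b_{21}$, and $G_{1,2}$ to be proportional to $a_{11}a_{20}-b_{11}b_{20}$. This gives the two common relations in the statement. Next I will decompose the ideal $\langle G_{m,k}\rangle$ (or its radical) with modular/triangular decomposition, following a case split. The branches are first $a_{20}b_{20}\neq0$ versus $a_{20}=0$ or $b_{20}=0$, and inside these $a_{11}=b_{11}=0$ versus not, and $a_{30}b_{30}\neq0$ and so on. Parametrized components such as F$_{02}$, F$_{04}$, F$_{12}$, F$_{13}$, F$_{17}$, F$_{18}$, F$_{25}$ and F$_{26}$ appear as images of rational maps. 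I will recover them by introducing auxiliary variables such as $h,t,r,\tau,c,e$ via ratios, for instance $h=b_{20}/a_{20}$. Finally I will check that each prime component found equals one of F$_{01}$--F$_{28}$, or is contained in one. Where a component is not realized, I will show that some higher $G_{m,k}$ is nonzero on it.

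\textbf{Sufficiency.} For each family I will produce, for all $\varepsilon$, an analytic first integral $zw+O(3)$, grouping the families by mechanism.
\begin{itemize}
\item F$_{02}$ is a reversibility/symmetry case. The map $(z,w)\mapsto(\kappa w,\kappa^{-1}z)$ with $h=\kappa^{\pm1}$, composed with time reversal, should be a symmetry. A formal argument as in Lemma~\ref{lem_exchange} then gives the first integral.
\item F$_{01}$, F$_{03}$, F$_{04}$ and F$_{16}$ look Hamiltonian or Hamiltonian-times-factor. There the divergence conditions, for example $b_{11}=2a_{20}$ and $b_{12}=3a_{30}$, make $\operatorname{div}X=0$ up to a factor.
\item The families with many vanishing coefficients (F$_{05}$--F$_{08}$, F$_{19}$--F$_{21}$, F$_{27}$) have one equation autonomous or decoupled along an invariant line. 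For these I will build Darboux factors $W$ with $XW=KW$ and $\operatorname{div}X=\lambda K$, and apply Lemma~\ref{lem_inverse_integrating_factor}.
\item The parametric families with $a_{20}b_{20}\neq0$ (F$_{09}$--F$_{15}$, F$_{17}$, F$_{18}$, F$_{22}$--F$_{26}$, F$_{28}$) I will try to bring, by the linear change $x=\alpha z+\beta w$, $y=\alpha z-\beta w$, into the form of Lemma~\ref{lem_regular_IVP}, or into Liénard form for the Le~Van~Linh--Sadovskii criterion. Otherwise I will find invariant conics or cubics $f_i$ and use $V=\prod f_i^{\lambda_i}$.
\end{itemize}
Every construction must hold uniformly in $\varepsilon$. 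Since $\varepsilon$ enters only as a scaling of the nonlinear part, I will absorb it by $(z,w)\mapsto(z,w)/\sqrt{\varepsilon}$ where the structure is homogeneous-compatible. Otherwise I will check that the cofactor relations are linear in $\varepsilon$.

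The main obstacle is the completeness of the algebraic decomposition. The ideal in fourteen parameters is huge, and the relevant components are only found after parametrization. Certifying that no component is missing is the hard part: I would use modular primary decomposition over several primes with rational reconstruction, then verify that the intersection of the listed components has the same radical as the computed ideal. A secondary difficulty is finding the integrating factors for F$_{12}$, F$_{13}$, F$_{25}$ and F$_{26}$. For these I would first search for invariant algebraic curves of degree $\le3$ with undetermined coefficients.
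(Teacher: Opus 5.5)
Your necessity half matches the paper's. You take the $\varepsilon$-coefficients of $g_1,\dots,g_7$ and decompose them on the charts $a_{20}b_{20}\neq0$, exactly one of $a_{20},b_{20}$ zero, and $a_{20}=b_{20}=0$. Each surviving component is then matched to a family or excluded by a higher coefficient. Your remark that $\varepsilon^k$ in $g_m$ collects exactly $2k-2m$ quadratic and $2m-k$ cubic coefficients is correct.

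The gap is in sufficiency, which is most of the actual proof. You offer a menu of techniques rather than constructions, and some of your assignments fail.

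(i) $\mathrm{F}_{03}$ and $\mathrm{F}_{16}$ are not Hamiltonian. Under $a_{12}=-b_{30}$, $b_{12}=-a_{30}$, $a_{21}=b_{21}$ one gets $\operatorname{div}X=2\varepsilon(a_{20}z-b_{20}w+2a_{30}z^2-2b_{30}w^2)\neq0$. Saying the divergence vanishes ``up to a factor'' only restates that an integrating factor exists, which is what must be proved. The paper uses an explicit quartic inverse integrating factor $V_{03}=1+2\varepsilon M+\varepsilon^2N+\varepsilon^3T$ for $\mathrm{F}_{03}$. For $\mathrm{F}_{16}$ it splits into two cases:
\begin{itemize}
\item $a_{30}=0$, handled by a polynomial $V_{16}$;
\item $a_{30}\neq0$, where $x=\rho Z+W$, $y=\rho Z-W$ with $\rho^2=-1/A$ puts the system in the form of Lemma~\ref{lem_regular_IVP}.
\end{itemize}

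(ii) Your Darboux mechanism ($XW=KW$, $\operatorname{div}X=\lambda K$) does close $\mathrm{F}_{19}$, $\mathrm{F}_{20}$, $\mathrm{F}_{27}$, and $\mathrm{F}_{05}$ with an exponential factor. The paper finds no such factor for the other families in that group and uses analytic constructions instead:
\begin{itemize}
\item $\mathrm{F}_{06}$: a regular ODE for $zw$ as a function of $w$;
\item $\mathrm{F}_{07}$: an analytic invariant curve $w=\phi(z)$ of the Riccati equation, plus quadratures;
\item $\mathrm{F}_{21}$: the series $zf^{-2/3}\sum_m(\varepsilon b_{03}z^3/f^2)^m p_m(w)$, with $f=1+\varepsilon b_{20}w+\varepsilon b_{30}w^2$.
\end{itemize}
For $\mathrm{F}_{21}$ the polynomial coefficients ($\deg p_m\le m+1$) must be shown to grow at most geometrically. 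Nothing in your plan would produce such an object or prove its convergence.

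Three smaller points. First, $\mathrm{F}_{22}$--$\mathrm{F}_{26}$ have $a_{20}=0\neq b_{20}$ (or the exchange) and $\mathrm{F}_{28}$ has $a_{20}=b_{20}=0$, so they are not in your $a_{20}b_{20}\neq0$ group. The paper treats them with one-sided normalizations such as $Z=z/A$, $W=b_{20}w$, $A=a_{02}/b_{20}^2$. It closes $\mathrm{F}_{25}$ by recognizing it as the $\kappa=0$ case of the $\mathrm{F}_{13}$ certificate $V_1V_2$.

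Second, the rescaling $(z,w)\mapsto\lambda(z,w)$ multiplies the quadratic part by $\lambda$ and the cubic part by $\lambda^2$, so $\varepsilon$ cannot be absorbed when both are present. You do not need it. Every family is invariant under $(a_{ij},b_{ij})\mapsto\varepsilon(a_{ij},b_{ij})$, since its relations are homogeneous in the coefficients. So proving a center at $\varepsilon=1$ on the whole family suffices.

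Third, for $\mathrm{F}_{02}$ your involution $(z,w)\mapsto(hw,z/h)$ with time reversal is indeed a symmetry. However, symmetrizing $H$ only yields a formal first integral, so you must also invoke the convergence theorem for $1:-1$ resonant saddles. The paper avoids this with $u=z+hw$, $v=z-hw$. This gives $\dot u=vf(u,v^2)$, $\dot v=g(u,v^2)$, and an analytic invariant of the regular equation $dT/du=2g/f$ with $T=v^2$.
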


\begin{proof}
We first prove the necessity. Suppose that the origin of
system~\eqref{sys_general} is a weakly persistent center. Then all
singular point quantities introduced in Section~\ref{sec_pre} must
vanish identically with respect to $\varepsilon$.

The first singular point quantity is
\[
g_1(\varepsilon)
=
(a_{21}-b_{21})\varepsilon
+
(b_{11}b_{20}-a_{11}a_{20})\varepsilon^2.
\]
Hence,
\begin{equation}\label{eq:common_center_conditions}
a_{21}=b_{21},
\qquad
a_{11}a_{20}=b_{11}b_{20}.
\end{equation}
Moreover, the second singular point quantity has the form
\[
g_2(\varepsilon)
=
G_{2,2}\varepsilon^2
+
G_{2,3}\varepsilon^3
+
G_{2,4}\varepsilon^4.
\]
To solve the resulting polynomial equations, we distinguish three
cases according to the values of $a_{20}$ and $b_{20}$.

\medskip
\noindent\textbf{Case 1. $a_{20}b_{20}\neq0$.}

Under the nonsingular scaling
\[
Z=a_{20}z,\qquad W=b_{20}w,
\]
we introduce
\[
t=\frac{a_{11}}{b_{20}}
=\frac{b_{11}}{a_{20}},
\qquad
A=\frac{a_{02}a_{20}}{b_{20}^2},
\qquad
B=\frac{b_{02}b_{20}}{a_{20}^2},
\qquad
C=\frac{a_{30}}{a_{20}^2},
\qquad
D=\frac{b_{30}}{b_{20}^2},
\]
\[
U=\frac{a_{12}}{b_{20}^2},
\qquad
V=\frac{b_{12}}{a_{20}^2},
\qquad
E=\frac{a_{03}a_{20}}{b_{20}^3},
\qquad
F=\frac{b_{03}b_{20}}{a_{20}^3}.
\]
In these normalized parameters,
\[
G_{2,2}=DV-CU,
\]
\[
\begin{aligned}
G_{2,3}
=\frac{1}{3}\bigl(
&5ACt+2AC+AFt-2AF-3AVt+2AV
-5BDt-2BD-BEt+2BE
\\
&+3BUt-2BU+6Ct^2-3Ct
-6Dt^2+3Dt+3Ut-3Vt
\bigr),
\end{aligned}
\]
and
\[
G_{2,4}
=
\frac{1}{3}t(A-B)(t-2)(2t+1).
\]
Thus, $G_{2,4}=0$ yields the branches
\[
t=0,\qquad
t=2,\qquad
A=B,\qquad
t=-\frac12.
\]

The higher-order singular point quantities are then computed
successively on these branches. In particular, the branch $A=B$,
together with its rank-degenerate subcases, yields the families
corresponding to $F_{01}$, $F_{02}$, $F_{04}$, and
$F_{09}$--$F_{11}$. The further decomposition of the branch $t=0$,
including the cases
\[
C=D=0,
\qquad
(C,D)\neq(0,0),
\]
and the subsequent vanishing or nonvanishing of the relevant
elimination factors, gives the families corresponding to
$F_{03}$, $F_{07}(a)$--$F_{08}$, and $F_{12}$--$F_{18}$,
together with families already obtained on the preceding branches.
Consequently, the case $a_{20}b_{20}\neq0$ yields
\[
F_{01}\text{--}F_{04},
\qquad
F_{07}(a),\ F_{07}(b),\ F_{08}	-F_{18}.
\]

\medskip
\noindent\textbf{Case 2. $a_{20}b_{20}=0$ and
$(a_{20},b_{20})\neq(0,0)$.}

By exchanging the two coefficient sets if necessary, we first assume
\[
a_{20}=0,\qquad b_{20}=d\neq0.
\]
It follows from \eqref{eq:common_center_conditions} that $b_{11}=0$.
After the scaling
\[
Z=z,\qquad W=dw,
\]
we set
\[
\begin{aligned}
t&=\frac{a_{11}}{d},
&
A&=\frac{a_{02}}{d^2},
&
B&=db_{02},&
C&=a_{30},
&
D&=\frac{b_{30}}{d^2},
\\
U&=\frac{a_{12}}{d^2},
&
V&=b_{12},&
E&=\frac{a_{03}}{d^3},
&
F&=db_{03}.
\end{aligned}
\]
Then
\[
G_{2,2}=DV-CU,
\]
\[
G_{2,3}
=
\frac{1}{3}
\bigl(
AFt-2AF-5BDt-2BD+3BUt-2BU
+6Ct^2-3Ct-3Vt
\bigr),
\]
and
\[
G_{2,4}
=
-\frac{1}{3}Bt(t-2)(2t+1).
\]
Hence, the relevant branches are
\[
t=0,\qquad
t=2,\qquad
t=-\frac12,
\qquad
B=0.
\]

The higher-order singular point quantities are evaluated successively
on these branches, together with all their degenerate boundary cases.
Solving the resulting polynomial systems gives
\[
\begin{gathered}
F_{01},\quad F_{03},\quad F_{04},\quad
F_{06}(a),\quad F_{07}(a),\quad F_{08},\quad
F_{19}(a),\quad F_{20}(a),\\ F_{21}(a),\quad
 F_{22}(a),\quad
F_{23}(a),\quad F_{24}(a),\quad F_{25}(a),\quad F_{26}(a).
\end{gathered}
\]
Thus, these are the conditions obtained when
$a_{20}=0$ and $b_{20}\neq0$.

When
\[
b_{20}=0,\qquad a_{20}\neq0,
\]
the corresponding conditions are obtained by exchanging the two
coefficient sets. They are
\[
\begin{gathered}
F_{01},\quad F_{03},\quad F_{04},\quad
F_{06}(b),\quad F_{07}(b),\quad F_{08},\quad
F_{19}(b),\quad F_{20}(b),\\ F_{21}(b),\quad F_{22}(b),\quad
F_{23}(b),\quad F_{24}(b),\quad F_{25}(b),\quad F_{26}(b).
\end{gathered}
\]
Hence, these two lists exhaust the singly degenerate case
$a_{20}b_{20}=0$ with $(a_{20},b_{20})\neq(0,0)$.

\medskip
\noindent\textbf{Case 3. $a_{20}=b_{20}=0$.}

Set
\[
\begin{alignedat}{5}
u&=a_{11},
&\qquad v&=b_{11},
&\qquad A&=a_{02},
&\qquad B&=b_{02},
&\qquad C&=a_{30},
\\[0.8ex]
D&=b_{30},
&\qquad V&=b_{12},
&\qquad U&=a_{12},
&\qquad E&=a_{03},
&\qquad F&=b_{03}.
\end{alignedat}
\]
The relation $a_{21}=b_{21}$ remains in force. In this case,
\[
G_{2,2}=DV-CU,
\]
\[
G_{2,3}
=
\frac{1}{3}
\bigl(
5ACv+AFu-3AVv-5BDu-BEv+3BUu
+6Cu^2-6Dv^2
\bigr),
\]
and
\[
G_{2,4}
=
\frac{2}{3}\left(Av^3-Bu^3\right).
\]
We further distinguish
\[
uv\neq0,\qquad
uv=0,\quad (u,v)\neq(0,0),
\qquad
u=v=0.
\]

The higher-order singular point quantities are then computed
successively in each subcase. When $u=v=0$, the remaining quadratic
coefficients $A$ and $B$ are further separated according to their
vanishing, including the purely cubic case. Solving the resulting
polynomial systems gives
\[
F_{01}-F_{05},\quad F_{06}(a),\quad F_{06}(b),
\quad
F_{21}(a),\quad F_{21}(b),\quad
F_{27}(a),\quad F_{27}(b),\quad F_{28}.
\]
Thus, these conditions exhaust the doubly degenerate case
$a_{20}=b_{20}=0$.

Combining the above three cases, all the parameter relations
$F_{01}$--$F_{28}$ stated in Theorem~\ref{thm:cubic_center} are
obtained. Therefore, the necessity is proved.


We now prove the sufficiency of the conditions listed in
Theorem~\ref{thm:cubic_center}. Throughout the proof, we write
\[
S=a_{21}=b_{21}.
\]

When condition $\mathrm{F}_{01}$ holds, system~\eqref{sys_general} can be rewritten as 
\[
\begin{cases} 
\dot z =z+\varepsilon\bigl( a_{20}z^2+2b_{20}zw+a_{02}w^2 +a_{30}z^3+Sz^2w+3b_{30}zw^2+a_{03}w^3 \bigr),\\[1ex]
\dot w =-w-\varepsilon\bigl( b_{20}w^2+2a_{20}zw+b_{02}z^2 +b_{30}w^3+Szw^2+3a_{30}z^2w+b_{03}z^3 \bigr). 
\end{cases}
\]
It admits the following first integral 
\[ 
\begin{aligned}
H_{01}={}&zw+\varepsilon\Bigg( a_{20}z^2w+b_{20}zw^2 +\frac{a_{02}}{3}w^3+\frac{b_{02}}{3}z^3+a_{30}z^3w+b_{30}zw^3 +\frac{S}{2}z^2w^2\\ 
& +\frac{a_{03}}{4}w^4+\frac{b_{03}}{4}z^4 \Bigg).
\end{aligned} 
\]
Therefore, the origin is a weakly persistent center.

When condition $\mathrm{F}_{02}$ holds,
system~\eqref{sys_general} can be rewritten as
\[
\begin{cases}
\dot z=z+\varepsilon\left(
a_{20}z^2+a_{11}zw+a_{02}w^2
+a_{30}z^3+Sz^2w+a_{12}zw^2+a_{03}w^3
\right),\\[1ex]
\dot w=-w-\varepsilon\left(
ha_{20}w^2+\dfrac{a_{11}}{h}zw+\dfrac{a_{02}}{h^3}z^2
+h^2a_{30}w^3+Szw^2
+\dfrac{a_{12}}{h^2}z^2w+\dfrac{a_{03}}{h^4}z^3
\right),
\end{cases}
\]
where $h\neq0$. Taking
\[
u=z+hw,
\qquad
v=z-hw,
\]
the system becomes
\[
\dot u=v f(u,v^2),
\qquad
\dot v=g(u,v^2),
\]
where $f$ and $g$ are analytic near the origin and satisfy
\[
f(0,0)=1,
\qquad
g(u,v^2)=u+O(2).
\]
Indeed, the above system becomes 
\begin{equation*}
\begin{cases} 
\dot u =v+\frac{\varepsilon v}{4h^3} \Big[ 4h(a_{20}h^2-a_{02})u +(a_{21}h^2-a_{12}h-3a_{03}+3a_{30}h^3)u^2 \\ \qquad +(-a_{21}h^2+a_{12}h-a_{03}+a_{30}h^3)v^2 \Big],\\
\dot v =u+\frac{\varepsilon}{4h^3} \Big[ 2h(a_{02}+a_{11}h+a_{20}h^2)u^2 +(a_{21}h^2+a_{12}h+a_{03}+a_{30}h^3)u^3\\
\qquad +2h(a_{02}-a_{11}h+a_{20}h^2)v^2 +(-a_{21}h^2-a_{12}h+3a_{03}+3a_{30}h^3)uv^2 \Big]. 
\end{cases} 
\end{equation*}
Setting $T=v^2$, we obtain
\[
\frac{dT}{du}
=
\frac{2g(u,T)}{f(u,T)}.
\]
Since $f(0,0)=1$, the equation is regular analytic near the origin and
admits a local analytic invariant $K(u,T)$ such that
\[
K(u,v^2)=v^2-u^2+O(3).
\]
Since $v^2-u^2=-4hzw$, it follows that
\[
H_{02}(z,w)=-\frac{1}{4h}K(u,v^2)=zw+O(3)
\]
is an analytic first integral. Hence, the origin is a weakly persistent
center.

When condition $\mathrm{F}_{03}$ holds,
system~\eqref{sys_general} reduces to
\[
\begin{cases}
\dot z=z+\varepsilon\left(
a_{20}z^2+a_{02}w^2+a_{30}z^3-b_{30}zw^2
\right),\\
\dot w=-w-\varepsilon\left(
b_{20}w^2+b_{02}z^2+b_{30}w^3-a_{30}z^2w
\right).
\end{cases}
\]
Let
\[
V_{03}
=
1+2\varepsilon M+\varepsilon^2N+\varepsilon^3T,
\]
where
\begin{align*}
M={}&
a_{20}z+b_{20}w+a_{30}z^2+b_{30}w^2,
\\
N={}&
M^2+(a_{20}b_{20}-3a_{02}b_{02})zw
+a_{02}a_{20}w^2+b_{02}b_{20}z^2+4a_{02}a_{30}w^2z
+4b_{02}b_{30}wz^2
\\
&-4a_{30}b_{30}z^2w^2,
\\
T={}&
\left(a_{02}w^3+a_{20}wz^2+b_{02}z^3+b_{20}w^2z\right)\\
&\times \left(a_{02}a_{30}w-a_{02}b_{02}+a_{20}b_{20}
+a_{20}b_{30}w+a_{30}b_{20}z+b_{02}b_{30}z\right).
\end{align*}
Denote the corresponding vector field by $X$. Direct differentiation
shows that
\[
XV_{03}
=
(\operatorname{div}X)V_{03}.
\]
Since
\[
V_{03}(0,0)=1,
\]
Lemma~\ref{lem_inverse_integrating_factor} implies that the system
admits a local analytic first integral
\[
H_{03}(z,w)=zw+O(3).
\]
Hence, the origin is a weakly persistent center.

When condition $\mathrm{F}_{04}$ holds,
system~\eqref{sys_general} can be rewritten as
\[
\begin{cases}
\dot z=z+\varepsilon\left(
a_{20}z^2+tb_{20}zw+a_{30}z^3
+Sz^2w+(2t-1)b_{30}zw^2
\right),\\[1ex]
\dot w=-w-\varepsilon\left(
b_{20}w^2+ta_{20}zw+b_{30}w^3
+Szw^2+(2t-1)a_{30}z^2w
\right).
\end{cases}
\]
Set
\(
q=zw,\quad
R=a_{20}z+b_{20}w+a_{30}z^2+b_{30}w^2,
\quad
K=a_{20}z-b_{20}w+2a_{30}z^2-2b_{30}w^2.
\) 
A direct calculation gives
\[
\dot q=\varepsilon(1-t)Kq,
\qquad
\dot R=K\bigl(1+\varepsilon R+\varepsilon Sq\bigr).
\]

If $t\neq0$, let \(F=1+\varepsilon\left(R+\frac{S}{t}q\right)\), 
then $\dot F=\varepsilon KF$, and hence
\[
H_{04}=qF^{\,t-1}=zw+O(3)
\]
is an analytic first integral.

If $t=0$, then
\[
\frac{dq}{dR}
=
\frac{\varepsilon q}
{1+\varepsilon R+\varepsilon Sq}.
\]
This regular analytic equation admits a local analytic invariant
$\mathcal K(R,q)$ with $\mathcal K(0,q)=q$. Therefore,
\[
H_{04}(z,w)
=
\mathcal K(R(z,w),q(z,w))
=
zw+O(3)
\]
is an analytic first integral.

Thus, the origin is a weakly persistent center.

When condition $\mathrm{F}_{05}$ holds,
system~\eqref{sys_general} reduces to
\[
\begin{cases}
\dot z=z+\varepsilon\left(
a_{11}zw+Sz^2w+a_{12}zw^2
\right),\\
\dot w=-w-\varepsilon\left(
b_{11}zw+Szw^2+b_{12}z^2w
\right).
\end{cases}
\]
Define
\[
H_{05}
=
zw\exp\left[
\varepsilon\left(
b_{11}z+a_{11}w+Szw
+\frac{b_{12}}{2}z^2
+\frac{a_{12}}{2}w^2
\right)
\right].
\]
A direct differentiation gives
\[
\frac{dH_{05}}{dt}=0.
\]
Moreover,
\[
H_{05}(z,w)=zw+O(3).
\]
Hence, $H_{05}$ is a local analytic first integral, and the origin is a
weakly persistent center.

When condition $\mathrm{F}_{06}(a)$ holds,
system~\eqref{sys_general} can be rewritten as
\[
\begin{cases}
\dot z=z+\varepsilon\left(
a_{11}zw+a_{02}w^2+Sz^2w+a_{12}zw^2+a_{03}w^3
\right),\\
\dot w=-w-\varepsilon\left(
b_{20}w^2+Szw^2+b_{30}w^3
\right).
\end{cases}
\]
Let $q=zw$. Then
\[
\frac{dq}{dw}
=
-\frac{
\varepsilon\left[
(a_{11}-b_{20})q+a_{02}w^2
+(a_{12}-b_{30})qw+a_{03}w^3
\right]
}{
1+\varepsilon b_{20}w+\varepsilon Sq
+\varepsilon b_{30}w^2
}.
\]
Since the denominator equals one at the origin, this equation is
regular analytic near $(q,w)=(0,0)$ and admits a local analytic
invariant $K_{06}(q,w)$ with
\[
K_{06}(q,0)=q.
\]
Hence,
\[
H_{06}(z,w)
=
K_{06}(zw,w)
=
zw+O(3)
\]
is an analytic first integral, and the origin is a weakly persistent
center.

When condition $\mathrm{F}_{06}(b)$ holds, the corresponding system is
\[
\begin{cases}
\dot z=z+\varepsilon\left(
a_{20}z^2+a_{30}z^3+Sz^2w
\right),\\
\dot w=-w-\varepsilon\left(
b_{11}zw+b_{02}z^2+Szw^2
+b_{12}z^2w+b_{03}z^3
\right).
\end{cases}
\]
This is the exchanged form of the system corresponding to
$\mathrm{F}_{06}(a)$. Therefore, by
Lemma~\ref{lem_exchange}, the analytic first integral constructed for
$\mathrm{F}_{06}(a)$ yields an analytic first integral
\[
H_{06b}(z,w)=zw+O(3).
\]
Hence, the origin is a weakly persistent center.

When condition $\mathrm{F}_{07}(a)$ holds,
system~\eqref{sys_general} reduces to
\[
\begin{cases}
\dot z=z+\varepsilon\left(
a_{20}z^2+a_{30}z^3
\right),\\
\dot w=-w-\varepsilon\left(
b_{20}w^2+b_{02}z^2+b_{12}z^2w+b_{03}z^3
\right).
\end{cases}
\]
Write
\[
p(z)
=
1+\varepsilon a_{20}z+\varepsilon a_{30}z^2.
\]
The system admits an analytic invariant curve
\[
w=\phi(z),
\qquad
\phi(z)=O(z^2).
\]
Set
\[
u=w-\phi(z),
\qquad
a_0(z)
=
-1-\varepsilon b_{12}z^2
-2\varepsilon b_{20}\phi(z),
\]
and define
\[
\eta(z)
=
\frac{p(z)+a_0(z)}{zp(z)},
\qquad
\mu(z)
=
\exp\left(
-\int_0^z\eta(s)\,ds
\right),
\qquad
\rho=\mu(z)zu.
\]
Furthermore, let
\[
B(z)
=
\frac{\varepsilon b_{20}}{p(z)\mu(z)},
\qquad
J(z)
=
\int_0^z
\frac{B(s)-\varepsilon b_{20}}{s^2}\,ds.
\]
Since $(p\mu)'(0)=0$, we have
\[
B(z)=\varepsilon b_{20}+O(z^2),
\]
and hence $J(z)$ is analytic at the origin. A direct calculation then
shows that
\[
H_{07}(z,w)
=
\frac{\rho}
{1+\varepsilon b_{20}\mu(z)u-\rho J(z)}
=
zw+O(3)
\]
is an analytic first integral. Therefore, the origin is a weakly
persistent center.

When condition $\mathrm{F}_{07}(b)$ holds, system~\eqref{sys_general}
can be rewritten as
\[
\begin{cases}
\dot z=z+\varepsilon\left(
a_{20}z^2+a_{02}w^2+a_{12}zw^2+a_{03}w^3
\right),\\
\dot w=-w-\varepsilon\left(
b_{20}w^2+b_{30}w^3
\right).
\end{cases}
\]
This system is the exchanged form of the system corresponding to
condition $\mathrm{F}_{07}(a)$. Hence, by
Lemma~\ref{lem_exchange}, the analytic first integral obtained for
$\mathrm{F}_{07}(a)$ yields an analytic first integral
\[
H_{07b}(z,w)=zw+O(3).
\]
Therefore, the origin is a weakly persistent center.

When condition $\mathrm{F}_{08}$ holds,
system~\eqref{sys_general} reduces to
\[
\begin{cases}
\dot z=z+\varepsilon\left(
a_{20}z^2+a_{30}z^3
\right),\\
\dot w=-w-\varepsilon\left(
b_{20}w^2+b_{30}w^3
\right).
\end{cases}
\]
Let
\[
p(z)
=
1+\varepsilon a_{20}z+\varepsilon a_{30}z^2,
\quad
r(w)
=
1+\varepsilon b_{20}w+\varepsilon b_{30}w^2,
\]
and define
\[
\Phi(z)
=
z\exp\left[
\int_0^z
\left(
\frac{1}{s p(s)}-\frac{1}{s}
\right)ds
\right],\quad
\Psi(w)
=
w\exp\left[
\int_0^w
\left(
\frac{1}{s r(s)}-\frac{1}{s}
\right)ds
\right].
\]
The integrands are analytic at the origin, and direct differentiation
gives
\[
X\Phi=\Phi,
\qquad
X\Psi=-\Psi.
\]
Consequently,
\[
H_{08}(z,w)
=
\Phi(z)\Psi(w)
=
zw+O(3)
\]
is an analytic first integral. Hence, the origin is a weakly persistent
center.

For the following conditions with $a_{20}b_{20}\neq0$, we use the
normalization
\begin{equation}\label{eq:common_normalization}
\begin{alignedat}{4}
Z&=a_{20}z,
&\qquad W&=b_{20}w,
&\qquad t&=\frac{a_{11}}{b_{20}}
           =\frac{b_{11}}{a_{20}},
&\qquad \widehat S&=\frac{S}{a_{20}b_{20}},
\\[0.8ex]
A&=\frac{a_{02}a_{20}}{b_{20}^2},
& B&=\frac{b_{02}b_{20}}{a_{20}^2},
& C&=\frac{a_{30}}{a_{20}^2},
& D&=\frac{b_{30}}{b_{20}^2},
\\[0.8ex]
U&=\frac{a_{12}}{b_{20}^2},
& V&=\frac{b_{12}}{a_{20}^2},
& E&=\frac{a_{03}a_{20}}{b_{20}^3},
& F&=\frac{b_{03}b_{20}}{a_{20}^3}.
\end{alignedat}
\end{equation}
Then system~\eqref{sys_general} takes the form
\begin{equation}\label{eq:normalized_general}
\begin{cases}
\dot Z
=
Z+\varepsilon\left(
Z^2+tZW+AW^2
+CZ^3+\widehat S Z^2W+UZW^2+EW^3
\right),\\
\dot W
=
-W-\varepsilon\left(
W^2+tZW+BZ^2
+DW^3+\widehat S ZW^2+VZ^2W+FZ^3
\right).
\end{cases}
\end{equation}

When condition $\mathrm{F}_{09}$ holds,
\eqref{eq:common_normalization} gives
\[
t=-\frac12,\qquad
AB=\frac14,
\qquad
C=D=U=V=E=F=\widehat S=0.
\]
Thus, writing $B=1/(4A)$, system~\eqref{eq:normalized_general}
reduces to
\[
\begin{cases}
\dot Z
=
Z+\varepsilon\left(
Z^2-\dfrac12ZW+AW^2
\right),\\[1ex]
\dot W
=
-W-\varepsilon\left(
W^2-\dfrac12ZW+\dfrac{1}{4A}Z^2
\right).
\end{cases}
\]
Set
\[
s=\varepsilon(Z+W),
\qquad
\ell=\varepsilon(Z+2AW),
\]
and define
\[
V_{09}
=
\frac{
\left[8A(1+s)+\ell^2\right]
\left[
64A^2+96A^2s
+12A(2A+1)s\ell
+(2A+1)\ell^3
\right]
}{
512A^3
}.
\]
A direct calculation gives
\[
XV_{09}=(\operatorname{div}X)V_{09},
\qquad
V_{09}(0,0)=1.
\]
Hence, by Lemma~\ref{lem_inverse_integrating_factor}, after pulling
back and normalizing, system~\eqref{sys_general} admits an analytic
first integral
\[
H_{09}(z,w)=zw+O(3).
\]
Therefore, the origin is a weakly persistent center.

When condition $\mathrm{F}_{10}$ holds,
system~\eqref{eq:normalized_general} reduces to
\[
\begin{cases}
\dot Z
=
Z+\varepsilon\left[
Z^2-\dfrac12ZW-\dfrac12W^2
+CZ^3-\dfrac32(C+D)Z^2W
+3DZW^2+\dfrac{C-3D}{2}W^3
\right],\\[1ex]
\dot W
=
-W-\varepsilon\left[
W^2-\dfrac12ZW-\dfrac12Z^2
+DW^3-\dfrac32(C+D)ZW^2
+3CZ^2W+\dfrac{D-3C}{2}Z^3
\right].
\end{cases}
\]
Let
\[
h=C-D,\qquad
j=C+D,\qquad
x=Z+W,\qquad
y=Z-W,
\]
and set $p=\dot y$. Then
\[
\dot y=p,
\qquad
\dot p
=
y+\frac{5\varepsilon}{2}yp
+\left(\varepsilon j-\frac{3\varepsilon^2}{4}\right)y^3
+\frac{3\varepsilon^2h^2}{4}y^5.
\]
With $u=y^2$,
\[
\frac{du}{dp}
=
\frac{2p}{
1+\dfrac{5\varepsilon}{2}p
+\left(\varepsilon j-\dfrac{3\varepsilon^2}{4}\right)u
+\dfrac{3\varepsilon^2h^2}{4}u^2
}.
\]
This equation is regular analytic at the origin and admits a local
analytic invariant $K_{10}(p,u)$ with $K_{10}(0,u)=u$. Since
\[
y^2-p^2=-4ZW+O(3),
\]
we obtain
\[
\widetilde H_{10}
=
-\frac14K_{10}(p,y^2)
=
ZW+O(3).
\]
After pulling back and normalizing,
\[
H_{10}(z,w)=zw+O(3).
\]
Hence, the origin is a weakly persistent center.

When condition $\mathrm{F}_{11}$ holds, the normalized parameters
satisfy
\[
t=A=B=\frac14,\quad
\widehat S=\frac32(C+D),
\qquad
U=3D,\qquad
V=3C,
\qquad
E=\frac{C-3D}{2},
\]
\[
F=\frac{D-3C}{2}, \quad \text{and} \quad
C^2-4CD+D^2=0.
\]
Hence, system~\eqref{eq:normalized_general} becomes
\[
\begin{cases}
\dot Z
=
Z+\varepsilon\left[
Z^2+\dfrac14ZW+\dfrac14W^2
+CZ^3+\dfrac32(C+D)Z^2W
+3DZW^2+\dfrac{C-3D}{2}W^3
\right],\\[1ex]
\dot W
=
-W-\varepsilon\left[
W^2+\dfrac14ZW+\dfrac14Z^2
+DW^3+\dfrac32(C+D)ZW^2
+3CZ^2W+\dfrac{D-3C}{2}Z^3
\right].
\end{cases}
\]
We may write
\[
C=\frac{h(r+1)}{2},
\qquad
D=\frac{h(r-1)}{2},
\qquad
r^2=3.
\]

For $\varepsilon\neq0$, the scaling
\(
(\widetilde Z,\widetilde W)=(\varepsilon Z,\varepsilon W)
\)
reduces the proof to $\varepsilon=1$; the case $\varepsilon=0$ is
linear. Suppressing the tildes and relabeling the cubic parameter, set
\[
x=Z+W,\qquad
y=Z-W,\qquad
u=y+rx,
\]
and
\[
p=\dot u
=
ru+\frac{u^2}{2}+\frac{hu^3}{4}
-x\left(2+\frac{ru}{4}\right),
\qquad
\mu=\left(1+\frac{ru}{8}\right)^{-4}.
\]
Define
\[
T=\left(\frac{ru}{8+ru}\right)^2,
\qquad
F_0(T)
=
-\frac{14}{9}T(16rhT-3T+6),
\]
and
\[
\begin{aligned}
V_0(T)=&-\frac{8}{27}T\Big(768h^2T^3-1024h^2T^2
-96rhT^3+256rhT^2-192rhT+9T^3-36T^2\\
&+54T-36
\Big).
\end{aligned}
\]
With \(R=\mu p-F_0(T)\), we obtain
\[
\frac{dT}{dR}=\frac{R+F_0(T)}{V_0'(T)}.
\]
This equation is regular analytic at the origin and admits a local
analytic invariant satisfying
\[
K_{11}(T,R)
=
T-\frac{3}{64}R^2+O(3).
\]
Since \(u^2-p^2=8ZW+O(3)\), it follows that
\[
\widetilde H_{11}
=
\frac83K_{11}(T,R)
=
ZW+O(3)
\]
is an analytic first integral. Undoing the scalings and normalizing
the quadratic term gives
\[
H_{11}(z,w)=zw+O(3).
\]
Hence, the origin is a weakly persistent center.

When condition $\mathrm{F}_{12}$ holds, there exist
$r,\tau\in\mathbb C$ such that
\[
a_{20}b_{20}r(7\tau-2)\neq0.
\]
Under the normalization \eqref{eq:common_normalization}, put \(h=D\).
The defining relations of $\mathrm{F}_{12}$ give
\[
A=
\frac{6r\tau-2r-\tau}{r^2(7\tau-2)},
\quad
B=
\frac{r(6\tau-2-r\tau)}{7\tau-2},
 \quad C=hr^2,\]
\[
U=h(6\tau-3),\quad
V=hr^2(6\tau-3),\quad
E=\frac{h\tau}{r},
\quad
F=h\tau r^3,
\quad
\widehat S=hr(5\tau-2).
\]
Hence, system~\eqref{eq:normalized_general} becomes
\[
\begin{cases}
\dot Z=Z+\varepsilon\left(Z^2+AW^2+hr^2Z^3+hr(5\tau-2)Z^2W+h(6\tau-3)ZW^2+\dfrac{h\tau}{r}W^3\right),\\[1ex]
\dot W=-W-\varepsilon\left(W^2+BZ^2+hW^3+hr(5\tau-2)ZW^2+hr^2(6\tau-3)Z^2W+h\tau r^3Z^3\right).
\end{cases}
\]
Let \(x=rZ+W,\quad y=rZ-W\), 
and put
\[
\alpha=
\frac{\tau(1+1/r)}{7\tau-2},
\qquad
\beta=
\frac{(3\tau-1)(1+1/r)}{7\tau-2},
\qquad
\gamma=\frac1r-1.
\]
Then
\[
\dot x=yd(x),
\qquad
\dot y=xQ_0(x)+\varepsilon\gamma xy+A_0(x)y^2,
\]
where
\[
d(x)=1+\varepsilon\alpha x
+\varepsilon h(1-\tau)x^2,\quad
Q_0(x)=1+\varepsilon\beta x
+\varepsilon h(3\tau-1)x^2,
\quad
A_0(x)=\varepsilon\beta
+2\varepsilon h(1-\tau)x.
\]
Setting
\[
k(x)=\frac{d'(x)+A_0(x)}{d(x)},
\]
we have
\[
(dQ_0)'-kdQ_0
=
\left[
\varepsilon^2\beta(\alpha-\beta)
+4\varepsilon h(2\tau-1)
\right]x.
\]
Therefore, Lemma~\ref{lem_regular_IVP} yields, after returning to the
original coordinates, an analytic first integral
\[
H_{12}(z,w)=zw+O(3).
\]
Hence, the origin is a weakly persistent center.

When condition $\mathrm{F}_{13}$ holds, there exist $c,e\in\mathbb C$
such that
\[
a_{20}b_{20}ce(ce^2-1)\neq0.
\]
Under the normalization~\eqref{eq:common_normalization}, set
\[
h=D=\frac{b_{30}}{b_{20}^2}.
\]
Choose $r_0\in\mathbb C$ such that $r_0^2=c$, and let
\[
u=r_0Z,\qquad
v=W,\qquad
\kappa=\frac{1}{r_0},
\qquad
\theta=er_0.
\]
Then $\theta(\theta^2-1)\neq0$, and the defining relations of
$\mathrm{F}_{13}$ reduce the system to
\begin{equation}\label{eq:C15_certificate_system}
\begin{cases}
\displaystyle
\dot u
=
u+\varepsilon\left[
\kappa u^2
+\frac{\theta(\theta\kappa-1)}{\theta^2-1}v^2
+h\left(
u^3+\theta u^2v+\theta v^3
\right)
\right],\\[2ex]
\displaystyle
\dot v
=
-v-\varepsilon\left[
v^2
+\frac{\theta(\theta-\kappa)}{\theta^2-1}u^2
+h\left(
v^3+\theta uv^2+\theta u^3
\right)
\right].
\end{cases}
\end{equation}

Set
\[
q=\theta(u^2+v^2)+uv,
\qquad
\Delta=\theta^2-1,
\]
and
\[
m=\frac{2\kappa\theta^2-\kappa-\theta}{\Delta},
\qquad
n=\frac{2\theta^2-1-\kappa\theta}{\Delta},
\qquad
c_0=
\frac{(\kappa^2+1)\theta^3
-3\kappa\theta^2+\kappa}{\Delta^2}.
\]
Define
\[
V_1
=
1+\varepsilon\left(
\frac{\theta-\kappa}{\Delta}u
+
\frac{\kappa\theta-1}{\Delta}v
\right)
+\frac{\varepsilon h}{2\theta}q,
\]
and
\[
\begin{aligned}
V_2
={}&
1+\varepsilon(mu+nv)
+\varepsilon h(u^2+v^2+4\theta uv)
+\varepsilon^2c_0q
\\
&+\varepsilon^2h\left[
\theta n u^3
+(2\kappa\theta+1)u^2v
+(\kappa+2\theta)uv^2
+\theta m v^3
\right]
+\varepsilon^2h^2q^2.
\end{aligned}
\]
For
\[
V_{\kappa,\theta,h}=V_1V_2,
\]
direct polynomial differentiation gives
\begin{equation}\label{eq:C15_IIF_identity}
XV_{\kappa,\theta,h}
=
(\operatorname{div}X)V_{\kappa,\theta,h},
\qquad
V_{\kappa,\theta,h}(0,0)=1.
\end{equation}
Notice that neither $V_{\kappa,\theta,h}$ nor
\eqref{eq:C15_IIF_identity} involves division by $\kappa$; hence the
identity remains valid for $\kappa=0$ whenever
$\theta(\theta^2-1)\neq0$.

By Lemma~\ref{lem_inverse_integrating_factor},
system~\eqref{eq:C15_certificate_system} admits a local analytic first
integral
\[
\widetilde H_{13}(u,v)=uv+O(3).
\]
Since
\[
uv=r_0a_{20}b_{20}zw
\]
and $r_0a_{20}b_{20}\neq0$, pulling the first integral back and
normalizing its quadratic term yields
\[
H_{13}(z,w)=zw+O(3).
\]
Therefore, the origin is a weakly persistent center.

When condition $\mathrm{F}_{14}$ holds, under the normalization
\eqref{eq:common_normalization}, set \(r=A,\quad h=C=D\). 
The defining relations of $\mathrm{F}_{14}$ give
\[
B=1-r,\qquad
U=V=0,\qquad
E=F=\widehat S=h.
\]
Hence, system~\eqref{eq:normalized_general} reduces to
\[
\begin{cases}
\dot Z
=
Z+\varepsilon\left[
Z^2+rW^2
+h\left(Z^3+Z^2W+W^3\right)
\right],\\
\dot W
=
-W-\varepsilon\left[
W^2+(1-r)Z^2
+h\left(W^3+ZW^2+Z^3\right)
\right].
\end{cases}
\]
Set
\begin{equation}\label{eq:common_abq}
a=\varepsilon,\qquad
b=\varepsilon h,\qquad
q=Z^2+ZW+W^2.
\end{equation}
Define
\[
\begin{aligned}
F_1={}&
bq+2a\bigl[rW+(1-r)Z\bigr]+2,\\
F_2={}&
b^2\left(
W^4+2W^3Z+3W^2Z^2+2WZ^3+Z^4
\right)\\
&+ab\left[
(r+1)W^3+3W^2Z+3WZ^2+(2-r)Z^3
\right]\\
&+a^2(r^2-r+1)q
+b\left(W^2+4WZ+Z^2\right)
+a\left[(2-r)W+(r+1)Z\right]+1.
\end{aligned}
\]
Then \(V_{14}=\frac{F_1F_2}{2}\) satisfies
\[
XV_{14}=(\operatorname{div}X)V_{14},
\qquad
V_{14}(0,0)=1.
\]
Therefore, Lemma~\ref{lem_inverse_integrating_factor} yields, after
pulling back and normalizing, an analytic first integral
\[
H_{14}(z,w)=zw+O(3).
\]
Hence, the origin is a weakly persistent center.

When condition $\mathrm{F}_{15}$ holds, under the normalization
\eqref{eq:common_normalization}, set \(r=A,\enspace h=C=D.\)
The defining relations of $F_{15}$ give
\[
B=-2-r,\qquad
U=V=-\frac{9h}{7},\qquad
E=F=-\frac{2h}{7},
\qquad
\widehat S=\frac{4h}{7}.
\]
Hence, system~\eqref{eq:normalized_general} reduces to
\[
\begin{cases}
\dot Z
=
Z+\varepsilon\left[
Z^2+rW^2+hZ^3
+\dfrac{4h}{7}Z^2W
-\dfrac{9h}{7}ZW^2
-\dfrac{2h}{7}W^3
\right],\\
\dot W
=
-W-\varepsilon\left[
W^2-(2+r)Z^2+hW^3
+\dfrac{4h}{7}ZW^2
-\dfrac{9h}{7}Z^2W
-\dfrac{2h}{7}Z^3
\right].
\end{cases}
\]
Using the notation \eqref{eq:common_abq}, define
\[
G_1
=
5b(W-Z)^2+7a(r+1)(W-Z)+7,
\]
\[
\begin{aligned}
G_2={}&
b^2(W-Z)^4
+7ab\left[
r(W-Z)^3-W^3-W^2Z+5WZ^2-3Z^3
\right]\\
&+49a^2\left[
r^2q+r(W^2+2WZ+3Z^2)+WZ+2Z^2
\right]\\
&+b\left(70W^2+196WZ+70Z^2\right)
+49a\left[(1-r)W+(r+3)Z\right]+49.
\end{aligned}
\]
Then \(V_{15}=\frac{G_1G_2}{343}\) satisfies
\[
XV_{15}=(\operatorname{div}X)V_{15},
\qquad
V_{15}(0,0)=1.
\]
Therefore, Lemma~\ref{lem_inverse_integrating_factor} yields, after
pulling back and normalizing, an analytic first integral
\[
H_{15}(z,w)=zw+O(3).
\]
Hence, the origin is a weakly persistent center.

When condition $\mathrm{F}_{16}$ holds, under the normalization
\eqref{eq:common_normalization}, the defining relations give
\[
t=0,\qquad
AB=1,\qquad
D=-AC,\qquad
U=-D,\qquad
V=-C,
\]
and
\[
E=A^2F,\qquad
\widehat S=AF.
\]
In particular, $A\neq0$. Setting \(d=F\), 
system~\eqref{eq:normalized_general} reduces to
\[
\begin{cases}
\dot Z
=
Z+\varepsilon\left[
Z^2+AW^2
+CZ^3+AdZ^2W+ACZW^2+A^2dW^3
\right],\\
\dot W
=
-W-\varepsilon\left[
W^2+\dfrac1A Z^2
-ACW^3+AdZW^2-CZ^2W+dZ^3
\right].
\end{cases}
\]

If $C=0$, then $D=0$. Let \(Q_0=Z^2+AW^2\) and define
\[
\begin{aligned}
V_{16}={}&
1+\varepsilon\left[
2(Z+W)+4AdZW
\right]\\
&+\varepsilon^2\left[
\left(1+\frac1A\right)Z^2
+(1+A)W^2
+2d(Z+AW)Q_0
+Ad^2Q_0^2
\right].
\end{aligned}
\]
A direct calculation gives
\[
XV_{16}
=
(\operatorname{div}X)V_{16},
\qquad
V_{16}(0,0)=1.
\]
Hence, Lemma~\ref{lem_inverse_integrating_factor} yields an analytic
first integral with quadratic leading term $ZW$.

Suppose now that $C\neq0$. Choose $\rho\in\mathbb C$ such that \(\rho^2=-\frac1A\), and set \(h=-AC,\quad
j=\frac{A^2d}{h}\).
Then
\[
A=-\frac1{\rho^2},\quad
B=-\rho^2,\quad
C=h\rho^2,\quad
D=h,\quad E=hj,\quad
F=hj\rho^4,\quad
\widehat S=-hj\rho^2.
\]
With \(x=\rho Z+W,\qquad
y=\rho Z-W\),  the system becomes
\[
\dot x=yd_0(x),
\quad
\dot y=x+\varepsilon\gamma xy+\varepsilon\nu xy^2,
\]
where
\[
d_0(x)=1+\varepsilon\alpha x+\varepsilon\beta x^2,
\]
with
\[
\alpha=1+\frac1\rho,\qquad
\gamma=\frac1\rho-1,
\qquad
\beta=h(1-j\rho),
\qquad
\nu=h(1+j\rho).
\]
Setting
\[
k(x)=\frac{d_0'(x)+\varepsilon\nu x}{d_0(x)},
\]
we have
\[
d_0'(x)-k(x)d_0(x)=-\varepsilon\nu x.
\]
Therefore, Lemma~\ref{lem_regular_IVP} applies.

Thus, in both cases, after returning to the original coordinates and
normalizing the quadratic term, system~\eqref{sys_general} admits an
analytic first integral
\[
H_{16}(z,w)=zw+O(3).
\]
Hence, the origin is a weakly persistent center.

For conditions $\mathrm{F}_{17}$ and $\mathrm{F}_{18}$, under the
normalization \eqref{eq:common_normalization}, set $h=U$ and use the
transformation
\begin{equation}\label{eq:C19_C20_transformation}
x=rZ+W,\qquad
y=rZ-W,\qquad
b=\frac1r-1.
\end{equation}

When condition $\mathrm{F}_{17}$ holds, the defining relations give
\[
t=C=D=0,\qquad
A=\frac{2r+1}{r^2},\qquad
B=r(r+2),
\]
\[
\widehat S=\frac{3hr}{2},\qquad
V=hr^2,\qquad
E=-\frac{h}{2r},\qquad
F=-\frac{hr^3}{2}.
\]
Hence, system~\eqref{eq:normalized_general} reduces to
\[
\begin{cases}
\dot Z
=
Z+\varepsilon\left[
Z^2+\dfrac{2r+1}{r^2}W^2
+\dfrac{3hr}{2}Z^2W
+hZW^2-\dfrac{h}{2r}W^3
\right],\\[1ex]
\dot W
=
-W-\varepsilon\left[
W^2+r(r+2)Z^2
+\dfrac{3hr}{2}ZW^2
+hr^2Z^2W-\dfrac{hr^3}{2}Z^3
\right].
\end{cases}
\]

Under the transformation \eqref{eq:C19_C20_transformation}, set
$a=1+\frac{1}{r}$. Then
\[
\dot x=yd_0(x),
\qquad
\dot y=xQ_0(x)+\varepsilon bxy+A_0(x)y^2,
\]
where
\[
d_0(x)=1-\varepsilon ax+\frac{\varepsilon h}{2}x^2,
\qquad
Q_0(x)=1+\varepsilon ax+\frac{\varepsilon h}{2}x^2,
\]
and $A_0(x)=\varepsilon a-\varepsilon hx$. In particular,
\[
\frac{d_0'(x)+A_0(x)}{d_0(x)}=0.
\]

Set $p=yd_0(x)$, $T=\frac{x^2}{2}$, and $R=p-\varepsilon bT$. Then
\[
\dot T=x(R+\varepsilon bT),
\qquad
\dot R
=
x\left[
1+2(\varepsilon h-\varepsilon^2a^2)T
+\varepsilon^2h^2T^2
\right].
\]
Therefore, the extended form of
Lemma~\ref{lem_regular_IVP} yields, after pulling back and normalizing,
an analytic first integral
\[
H_{17}(z,w)=zw+O(3).
\]
Hence, the origin is a weakly persistent center.

When condition $\mathrm{F}_{18}$ holds, the defining relations give
\[
t=C=D=0,\qquad
A=\frac{6r-1}{7r^2},\qquad
B=\frac{r(6-r)}{7},
\]
\[
\widehat S=\frac{5hr}{6},\qquad
V=hr^2,\qquad
E=\frac{h}{6r},\qquad
F=\frac{hr^3}{6}.
\]
Hence, system~\eqref{eq:normalized_general} reduces to
\[
\begin{cases}
\dot Z
=
Z+\varepsilon\left[
Z^2+\dfrac{6r-1}{7r^2}W^2
+\dfrac{5hr}{6}Z^2W
+hZW^2+\dfrac{h}{6r}W^3
\right],\\[1ex]
\dot W
=
-W-\varepsilon\left[
W^2+\dfrac{r(6-r)}{7}Z^2
+\dfrac{5hr}{6}ZW^2
+hr^2Z^2W+\dfrac{hr^3}{6}Z^3
\right].
\end{cases}
\]

Using \eqref{eq:C19_C20_transformation}, set
$a=(1+1/r)/7$. Then
\[
\dot x=yd_0(x),
\qquad
\dot y=xQ_0(x)+\varepsilon bxy+A_0(x)y^2,
\]
where
\[
d_0(x)=1+\varepsilon ax-\frac{\varepsilon h}{6}x^2,
\qquad
Q_0(x)=1+3\varepsilon ax+\frac{\varepsilon h}{2}x^2,
\]
and $A_0(x)=3\varepsilon a-\varepsilon hx/3$. Setting
\[
k(x)
=
\frac{d_0'(x)+A_0(x)}{d_0(x)}
=
\frac{4\varepsilon a-\dfrac{2\varepsilon h}{3}x}
     {d_0(x)},
\]
we obtain
\[
(d_0Q_0)'-kd_0Q_0
=
\left(
\frac{4\varepsilon h}{3}
-6\varepsilon^2a^2
\right)x.
\]
Therefore, Lemma~\ref{lem_regular_IVP} yields, after pulling back and
normalizing, an analytic first integral
\[
H_{18}(z,w)=zw+O(3).
\]
Hence, the origin is a weakly persistent center.

When condition $\mathrm{F}_{19}(a)$ holds, system~\eqref{sys_general} can be
rewritten as
\begin{equation}\label{eq:C21_reduced}
\begin{cases}
\displaystyle
\dot z
=
z+\varepsilon\left(
-\frac{b_{20}}{2}zw+a_{30}z^3
\right),\\[1ex]
\displaystyle
\dot w
=
-w-\varepsilon\left(
b_{20}w^2+b_{02}z^2-2a_{30}z^2w
\right).
\end{cases}
\end{equation}
Set
\[
W_{19}(z,w)
=
1+\varepsilon b_{20}w
+\varepsilon\left(
a_{30}+\frac{\varepsilon b_{20}b_{02}}{2}
\right)z^2.
\]
A direct calculation gives
\[
XW_{19}
=
\varepsilon\left(
2a_{30}z^2-b_{20}w
\right)W_{19},
\]
and
\[
\operatorname{div}X
=
\frac{5}{2}\varepsilon\left(
2a_{30}z^2-b_{20}w
\right).
\]
Therefore, by Lemma~\ref{lem_inverse_integrating_factor},
\[
V_{19}(z,w)
=
W_{19}^{5/2}
=
\left[
1+\varepsilon b_{20}w
+\varepsilon\left(
a_{30}+\frac{\varepsilon b_{20}b_{02}}{2}
\right)z^2
\right]^{5/2}
\]
is an analytic inverse integrating factor near the origin. Since
$W_{19}(0,0)=1$, the local analytic branch satisfying
$V_{19}(0,0)=1$ is well defined. Hence,
system~\eqref{eq:C21_reduced} admits a local analytic first integral
\[
H_{19}(z,w)=zw+O(3).
\]
Therefore, the origin is a weakly persistent center.

Condition $\mathrm{F}_{19}(b)$ is obtained from $\mathrm{F}_{19}(a)$ by
exchanging the two coefficient sets
\[
a_{ij}\longleftrightarrow b_{ij}.
\]
Thus, by Lemma~\ref{lem_exchange} and the result for
system~\eqref{eq:C21_reduced}, the origin is also a weakly persistent
center under condition $\mathrm{F}_{19}(b)$.

When condition $\mathrm{F}_{20}(a)$ holds, system~\eqref{sys_general} can be
rewritten as
\begin{equation}\label{eq:C23_reduced}
\begin{cases}
\displaystyle
\dot z
=
z-\frac{\varepsilon b_{20}}{3}zw,\\[1ex]
\displaystyle
\dot w
=
-w-\varepsilon\left(
b_{20}w^2+b_{03}z^3
\right).
\end{cases}
\end{equation}
Set
\[
W_{20}(z,w)
=
1+\varepsilon b_{20}w
+\frac{\varepsilon^2b_{20}b_{03}}{3}z^3.
\]
A direct calculation gives
\[
XW_{20}
=
-\varepsilon b_{20}w\,W_{20},\quad
\operatorname{div}X
=
-\frac{7}{3}\varepsilon b_{20}w.
\]
Therefore, by Lemma~\ref{lem_inverse_integrating_factor},
\[
V_{20}(z,w)
=
W_{20}^{7/3}
=
\left(
1+\varepsilon b_{20}w
+\frac{\varepsilon^2b_{20}b_{03}}{3}z^3
\right)^{7/3}
\]
is an analytic inverse integrating factor near the origin. Since
$W_{20}(0,0)=1$, system~\eqref{eq:C23_reduced} admits a local analytic
first integral
\[
H_{20}(z,w)=zw+O(3).
\]
Therefore, the origin is a weakly persistent center.

Condition $\mathrm{F}_{20}(b)$ is the exchanged form of
$\mathrm{F}_{20}(a)$. Hence, by Lemma~\ref{lem_exchange}, the origin is
also a weakly persistent center.

When condition $\mathrm{F}_{21}(a)$ holds, system~\eqref{sys_general} can be
rewritten as
\begin{equation}\label{eq:C25_reduced}
\begin{cases}
\displaystyle
\dot z
=
z+\varepsilon\left(
\frac{b_{20}}{3}zw-\frac{b_{30}}{3}zw^2
\right),\\[1ex]
\displaystyle
\dot w
=
-w-\varepsilon\left(
b_{20}w^2+b_{30}w^3+b_{03}z^3
\right).
\end{cases}
\end{equation}
Set
\[
f(w)
=
1+\varepsilon b_{20}w+\varepsilon b_{30}w^2,
\qquad
g(w)
=
f(w)-\frac{2}{3}wf'(w).
\]
Then
\[
g(w)
=
1+\frac{\varepsilon b_{20}}{3}w
-\frac{\varepsilon b_{30}}{3}w^2,
\]
and system~\eqref{eq:C25_reduced} takes the form
\[
\dot z=zg(w),
\qquad
\dot w=-wf(w)-\varepsilon b_{03}z^3.
\]

Let $p_0(w)=w$, and define $p_m(w)$ recursively by
\[
\left(
w\frac{d}{dw}-(3m+1)
\right)p_m(w)
=
-f(w)p_{m-1}'(w)
+
\left(
2m-\frac{4}{3}
\right)
f'(w)p_{m-1}(w),
\qquad m\geq1.
\]
We first show that this recursion uniquely determines $p_m$ and that
\[
\deg p_m\leq m+1.
\]
Indeed, if $\deg p_{m-1}\leq m$, then the right-hand side of the above
recursion is a polynomial of degree at most $m+1$. Moreover, on a
monomial $w^j$, the operator on the left-hand side acts as
\[
\left(
w\frac{d}{dw}-(3m+1)
\right)w^j
=
\bigl(j-(3m+1)\bigr)w^j.
\]
Since $0\leq j\leq m+1$, we have
\[
|j-(3m+1)|\geq 2m.
\]
Hence none of the corresponding divisors vanishes, so the recursion
uniquely determines each $p_m$, and $\deg p_m\leq m+1$ follows by
induction.

For a polynomial
\[
P(w)=\sum_j c_jw^j,
\]
define
\[
\|P\|_1=\sum_j|c_j|.
\]
Using the preceding lower bound for the divisors, we obtain
\[
\|p_m\|_1
\leq
\frac{1}{2m}
\left[
\|f p_{m-1}'\|_1
+
\left(
2m-\frac{4}{3}
\right)
\|f'p_{m-1}\|_1
\right].
\]
Since $\deg p_{m-1}\leq m$,
\[
\|p_{m-1}'\|_1
\leq
m\|p_{m-1}\|_1.
\]
Therefore,
\[
\begin{aligned}
\|p_m\|_1
&\leq
\frac{1}{2m}
\left[
m\|f\|_1
+
\left(
2m-\frac{4}{3}
\right)
\|f'\|_1
\right]
\|p_{m-1}\|_1\\
&\leq
\left(
\frac{\|f\|_1}{2}
+
\|f'\|_1
\right)
\|p_{m-1}\|_1.
\end{aligned}
\]
Since $\|p_0\|_1=1$, it follows inductively that
\[
\|p_m\|_1
\leq
\left(
\frac{\|f\|_1}{2}
+
\|f'\|_1
\right)^m.
\]

Now define
\[
H_{21}(z,w)
=
zf(w)^{-\frac{2}{3}}
\sum_{m=0}^{\infty}
\left(
\frac{\varepsilon b_{03}z^3}{f(w)^2}
\right)^m
p_m(w).
\]
Since $f(0)=1$, we may restrict to a sufficiently small neighborhood
of the origin in which $f(w)\neq0$. Moreover,
\[
|p_m(w)|
\leq
\|p_m\|_1
\leq
\left(
\frac{\|f\|_1}{2}
+
\|f'\|_1
\right)^m
\]
for $|w|$ sufficiently small. Hence the $m$th term of the series is
bounded by
\[
\left[
\left|
\frac{\varepsilon b_{03}z^3}{f(w)^2}
\right|
\left(
\frac{\|f\|_1}{2}
+
\|f'\|_1
\right)
\right]^m.
\]
Therefore, after further restricting the neighborhood if necessary so
that
\[
\left|
\frac{\varepsilon b_{03}z^3}{f(w)^2}
\right|
\left(
\frac{\|f\|_1}{2}
+
\|f'\|_1
\right)
<1,
\]
the series defining $H_{21}$ converges normally and hence defines an
analytic function near the origin.

Direct substitution, together with the recursion for $p_m$, gives
\[
XH_{21}=0.
\]
Since $p_0(w)=w$ and $f(0)=1$, we have
\[
H_{21}(z,w)=zw+O(3).
\]
Hence, $H_{21}$ is a local analytic first integral, and the origin is a
weakly persistent center.

When condition $\mathrm{F}_{21}(b)$ holds, the conclusion follows from
the corresponding exchanged system by Lemma~\ref{lem_exchange}.

When condition $\mathrm{F}_{22}(a)$ holds, system~\eqref{sys_general} can be
rewritten as
\[
\begin{cases}
\displaystyle
\dot z
=
z+\varepsilon\left(
a_{02}w^2+a_{30}z^3
+a_{21}z^2w-3b_{30}zw^2
\right),\\[1ex]
\displaystyle
\dot w
=
-w-\varepsilon\left(
b_{20}w^2+b_{30}w^3
+a_{21}zw^2-3a_{30}z^2w
\right),
\end{cases}
\]
where
\[
b_{20}\neq0,\qquad
b_{20}^2b_{30}=a_{02}^2a_{30},\qquad
a_{21}b_{20}=-2a_{02}a_{30}.
\]
Taking
\[
Z=z,\qquad W=b_{20}w,
\qquad
A=\frac{a_{02}}{b_{20}^2},
\qquad
C=a_{30},
\]
we obtain
\begin{equation}\label{eq:C27_normalized}
\begin{cases}
\dot Z
=
Z+\varepsilon\left(
AW^2+CZ^3-2ACZ^2W-3A^2CZW^2
\right),\\[1ex]
\dot W
=
-W-\varepsilon\left(
W^2+A^2CW^3-2ACZW^2-3CZ^2W
\right).
\end{cases}
\end{equation}

If $A=0$, system~\eqref{eq:C27_normalized} reduces to
\[
\dot Z=Z+\varepsilon CZ^3,\qquad
\dot W=-W-\varepsilon\left(W^2-3CZ^2W\right).
\]
Set
\[
p(Z)=1+\varepsilon CZ^2,\qquad
\mu(Z)=p(Z)^{-2},\qquad
q=\mu(Z)ZW.
\]
A direct calculation shows that
\[
\widetilde H_{22}(Z,W)
=
\frac{q}
{1+\varepsilon\mu(Z)W-\varepsilon^2CZq}
\]
satisfies
\[
X\widetilde H_{22}=0,
\qquad
\widetilde H_{22}(Z,W)=ZW+O(3).
\]

Now suppose that $A\neq0$. Taking
\[
x=Z+AW,\qquad y=Z-AW,
\]
system~\eqref{eq:C27_normalized} is transformed into
\[
\dot x=yd(x),\qquad
\dot y=xQ_0(x)+bxy+A_0(x)y^2,
\]
where
\[
a=\frac{\varepsilon}{2A},\quad
c=\varepsilon C,\quad
d(x)=1+cx^2,\quad
Q_0(x)=1+ax-cx^2,
\quad
b=-2a,\quad
A_0(x)=a+2cx.
\]
For
\[
k(x)=\frac{d'(x)+A_0(x)}{d(x)},
\]
a direct calculation gives
\[
\bigl(d(x)Q_0(x)\bigr)'
-k(x)d(x)Q_0(x)
=
-(a^2+4c)x.
\]
Hence, by Lemma~\ref{lem_regular_IVP}, system~\eqref{eq:C27_normalized}
admits a local analytic first integral
\[
\widetilde H_{22}(Z,W)=ZW+O(3).
\]
Thus, in either case, pulling the first integral back to the original
coordinates and normalizing its quadratic part gives
\[
H_{22}(z,w)=zw+O(3).
\]
Therefore, the origin is a weakly persistent center.

Condition $\mathrm{F}_{22}(b)$ is the exchanged form of
$\mathrm{F}_{22}(a)$. Hence, by Lemma~\ref{lem_exchange}, the origin is
also a weakly persistent center.

When condition $\mathrm{F}_{23}(a)$ holds, system~\eqref{sys_general} can be
rewritten as
\[
\begin{cases}
\displaystyle
\dot z
=
z+\varepsilon\left(
a_{30}z^3+a_{21}z^2w
-b_{30}zw^2+a_{03}w^3
\right),\\[1ex]
\displaystyle
\dot w
=
-w-\varepsilon\left(
b_{20}w^2+b_{02}z^2+b_{30}w^3
+a_{21}zw^2-a_{30}z^2w+b_{03}z^3
\right),
\end{cases}
\]
where
\[
b_{02}b_{20}\neq0,\quad
b_{02}b_{30}=-a_{30}b_{20},
\quad
b_{03}b_{20}^2=a_{03}b_{02}^2,\quad
a_{21}b_{20}=a_{03}b_{02}.
\]
Choose $\lambda\in\mathbb C\setminus\{0\}$ such that
$\lambda^2=b_{20}b_{02}$, and set
\[
Z=\lambda z,\qquad
W=b_{20}w,\qquad
C=\frac{a_{30}}{b_{20}b_{02}},\qquad
E=\frac{\lambda a_{03}}{b_{20}^3}.
\]
Then the system becomes
\begin{equation}\label{eq:C29_normalized}
\begin{cases}
\dot Z
=
Z+\varepsilon\left(
CZ^3+EZ^2W+CZW^2+EW^3
\right),\\[1ex]
\dot W
=
-W-\varepsilon\left(
W^2+Z^2-CW^3
+EZW^2-CZ^2W+EZ^3
\right).
\end{cases}
\end{equation}
Let $q=Z^2+W^2$ and define
\[
\begin{aligned}
V_{23}(Z,W)
={}
1+2\varepsilon\left[
W+C(Z^2-W^2)+2EZW
\right]+\varepsilon^2q
\left[
1-2CW+2EZ+(C^2+E^2)q
\right].
\end{aligned}
\]
A direct calculation gives
\[
XV_{23}=(\operatorname{div}X)V_{23},
\qquad
V_{23}(0,0)=1.
\]
Therefore, by Lemma~\ref{lem_inverse_integrating_factor},
system~\eqref{eq:C29_normalized} admits a local analytic first integral
with leading term $ZW$. Since $ZW=\lambda b_{20}zw$ and
$\lambda b_{20}\neq0$, after pulling it back and normalizing we obtain
\[
H_{23}(z,w)=zw+O(3).
\]
Hence, the origin is a weakly persistent center.

Condition $\mathrm{F}_{23}(b)$ is the exchanged form of
$\mathrm{F}_{23}(a)$. Hence, by Lemma~\ref{lem_exchange}, the origin is
also a weakly persistent center.

When condition $\mathrm{F}_{24}(a)$ holds, system~\eqref{sys_general} can be
rewritten as
\[
\begin{cases}
\displaystyle
\dot z
=
z+\varepsilon\left(
a_{02}w^2+a_{21}z^2w+a_{12}zw^2+a_{03}w^3
\right),\\[1ex]
\displaystyle
\dot w
=
-w-\varepsilon\left(
b_{20}w^2+b_{02}z^2+a_{21}zw^2
+b_{12}z^2w+b_{03}z^3
\right),
\end{cases}
\]
where
\[
a_{02}b_{20}\neq0,
\qquad
a_{02}^2b_{02}=4b_{20}^3,
\qquad
a_{02}^2b_{12}=4a_{12}b_{20}^2,
\]
\[
4a_{03}b_{20}=-a_{02}a_{12},
\qquad
a_{02}b_{03}=-b_{12}b_{20},
\qquad
a_{02}a_{21}=3a_{12}b_{20}.
\]
Introduce the normalization
\begin{equation}\label{eq:one_sided_normalization}
A=\frac{a_{02}}{b_{20}^2},
\qquad
Z=\frac{z}{A},
\qquad
W=b_{20}w.
\end{equation}
Since $a_{02}b_{20}\neq0$, this transformation is invertible. Setting
\[
h=\frac{4a_{12}}{b_{20}^2},
\]
the defining relations of $\mathrm{F}_{24}(a)$ give
\begin{equation}\label{eq:C31_normalized}
\begin{cases}
\displaystyle
\dot Z
=
Z+\varepsilon\left(
W^2+\frac{3h}{4}Z^2W
+\frac{h}{4}ZW^2-\frac{h}{16}W^3
\right),\\[1ex]
\displaystyle
\dot W
=
-W-\varepsilon\left(
W^2+4Z^2+\frac{3h}{4}ZW^2
+hZ^2W-hZ^3
\right).
\end{cases}
\end{equation}

Taking
\[
x=2Z+W,\qquad y=2Z-W,
\]
and setting
\[
a=\varepsilon,
\qquad
c=\frac{\varepsilon h}{8},
\]
system~\eqref{eq:C31_normalized} becomes
\[
\dot x=yd(x),
\qquad
\dot y=xQ_0(x)-axy+(a-2cx)y^2,
\]
where
\[
d(x)=1-ax+cx^2,
\qquad
Q_0(x)=1+ax+cx^2.
\]
Since
\[
d'(x)+a-2cx=0,
\]
let
\[
p=yd(x),
\qquad
T=\frac{x^2}{2},
\qquad
R=p+aT.
\]
A direct calculation gives
\[
\dot T=x(R-aT),
\qquad
\dot R
=
x\left[
1+2(2c-a^2)T+4c^2T^2
\right].
\]
Hence,
\[
\frac{dT}{dR}
=
\frac{R-aT}
{1+2(2c-a^2)T+4c^2T^2}
\]
is regular analytic near $(T,R)=(0,0)$ and admits a local analytic
invariant $K_{24}(T,R)$ satisfying $K_{24}(T,0)=T$. Its leading terms
are
\[
K_{24}(T,R)
=
T-\frac{R^2}{2}+O(3)
=
4ZW+O(3).
\]
Therefore,
\[
\widetilde H_{24}(Z,W)
=
\frac14K_{24}(T,R)
=
ZW+O(3)
\]
is a local analytic first integral. Pulling it back through
\eqref{eq:one_sided_normalization} and normalizing its quadratic term
gives
\[
H_{24}(z,w)=zw+O(3).
\]
Therefore, the origin is a weakly persistent center.

Condition $\mathrm{F}_{24}(b)$ is the exchanged form of
$\mathrm{F}_{24}(a)$. Hence, by Lemma~\ref{lem_exchange}, the origin is
also a weakly persistent center.

When condition $\mathrm{F}_{25}(a)$ holds, there exists $r\in\mathbb C$
such that
\[
a_{02}b_{20}r(r-1)\neq0,
\]
and system~\eqref{sys_general} can be rewritten as
\[
\begin{cases}
\dot z
=
z+\varepsilon\left(
a_{02}w^2+a_{30}z^3+a_{21}z^2w+a_{03}w^3
\right),\\[1ex]
\dot w
=
-w-\varepsilon\left(
b_{20}w^2+b_{02}z^2+b_{30}w^3
+a_{21}zw^2+b_{03}z^3
\right),
\end{cases}
\]
where
\[
a_{02}^2b_{02}
=
b_{20}^3r^2(r-1),
\qquad
a_{02}^2a_{30}
=
b_{20}^2b_{30}r(r-1),
\]
\[
a_{03}b_{20}(r-1)
=
-a_{02}b_{30},
\qquad
a_{02}b_{03}
=
-b_{02}b_{30},
\qquad
a_{02}a_{21}
=
-b_{20}b_{30}r.
\]

Applying the normalization~\eqref{eq:one_sided_normalization} and
setting \(h=\frac{b_{30}}{b_{20}^2}\), we obtain
\begin{equation}\label{eq:C33_normalized}
\begin{cases}
\dot Z
=
Z+\varepsilon\left[
W^2+hr(r-1)Z^3
-hrZ^2W-\dfrac{h}{r-1}W^3
\right],\\[1ex]
\dot W
=
-W-\varepsilon\left[
W^2+r^2(r-1)Z^2+hW^3
-hrZW^2-hr^2(r-1)Z^3
\right].
\end{cases}
\end{equation}

Choose $\theta\in\mathbb C$ such that
\[
\theta^2=\frac{r}{r-1},
\]
and set
\[
\rho=-\frac{\theta}{\theta^2-1}.
\]
Since $r(r-1)\neq0$, we have
\[
\theta(\theta^2-1)\neq0,
\qquad
\rho^2=r(r-1),
\qquad
-\frac{r}{\rho}=\theta.
\]
Under the scaling
\[
u=\rho Z,\qquad v=W,
\]
system~\eqref{eq:C33_normalized} reduces precisely to
system~\eqref{eq:C15_certificate_system} with $\kappa=0$.

By \eqref{eq:C15_IIF_identity}, the corresponding function
$V_{0,\theta,h}$ satisfies
\[
XV_{0,\theta,h}
=
(\operatorname{div}X)V_{0,\theta,h},
\qquad
V_{0,\theta,h}(0,0)=1.
\]
Therefore, Lemma~\ref{lem_inverse_integrating_factor} yields a local
analytic first integral with leading term $uv$. Since all the above
scalings are invertible, pulling this first integral back and
normalizing its quadratic term gives
\[
H_{25}(z,w)=zw+O(3).
\]
Therefore, the origin is a weakly persistent center.

Condition $\mathrm{F}_{25}(b)$ is the exchanged form of
$\mathrm{F}_{25}(a)$. Hence, by Lemma~\ref{lem_exchange}, the origin is
also a weakly persistent center.

When condition $\mathrm{F}_{26}(a)$ holds, there exists $r\in\mathbb C$
such that
\[
a_{02}b_{20}r(r-1)\neq0,
\]
and system~\eqref{sys_general} can be rewritten as
\[
\begin{cases}
\dot z
=
z+\varepsilon\left(
a_{02}w^2+a_{30}z^3+a_{21}z^2w
+a_{12}zw^2+a_{03}w^3
\right),\\[1ex]
\dot w
=
-w-\varepsilon\left(
b_{20}w^2+b_{02}z^2+b_{30}w^3
+a_{21}zw^2+b_{12}z^2w+b_{03}z^3
\right),
\end{cases}
\]
where
\[
a_{02}^2b_{02}
=
b_{20}^3r^2(r-1),
\quad
a_{02}^2a_{30}
=
b_{20}^2b_{30}r^2,\quad
a_{02}^2b_{12}
=
a_{12}b_{20}^2r^2,
\quad
a_{12}(7r-6)
=
b_{30}(6-9r),
\]
\[
6a_{03}b_{20}r
=
-a_{02}(7a_{12}+9b_{30})(r-1),
\quad
6a_{02}^3b_{03}
=
-b_{20}^3r^3(7a_{12}+9b_{30})(r-1),
\]
and
\[
6a_{02}a_{21}
=
-b_{20}r(1-2r)(7a_{12}+9b_{30}).
\]

Applying the normalization~\eqref{eq:one_sided_normalization}, set
\[
D=\frac{b_{30}}{b_{20}^2},
\qquad
U=\frac{a_{12}}{b_{20}^2}.
\]
The defining relations give
\[
U(7r-6)=D(6-9r).
\]
The relation
\[
U(7r-6)=3D(2-3r)
\]
can be parameterized as
\[
D=(7r-6)v,
\quad
U=3(2-3r)v,
\]
for some $v\in\mathbb C$. This parametrization also includes
$r=6/7$, in particular, no division by $7r-6$ is required.
 Substituting the remaining
relations, the normalized system becomes
\begin{equation}\label{eq:C35_normalized}
\begin{cases}
\begin{aligned}
\dot Z &=Z+\varepsilon\left[
W^2+r^2(7r-6)vZ^3
+2r(1-2r)vZ^2W
+3(2-3r)vZW^2
+\frac{2(r-1)v}{r}W^3
\right],\\
\dot W
&=-W-\varepsilon\left[
W^2+r^2(r-1)Z^2
+(7r-6)vW^3
+2r(1-2r)vZW^2 \right.
+3r^2(2-3r)vZ^2W
\\
&\quad\left.+2(r-1)r^3vZ^3
\right].
\end{aligned}
\end{cases}
\end{equation}

Taking
\[
x=rZ+W,\qquad y=rZ-W,
\]
system~\eqref{eq:C35_normalized} is transformed into
\[
\dot x=yd(x),
\qquad
\dot y=xQ_0(x)+bxy+A_0(x)y^2,
\]
where
\[
d(x)
=
1+\varepsilon(1-r)x
+\varepsilon v(5r-4)x^2,
\quad
Q_0(x)
=
1+\frac{\varepsilon r}{2}x
-\varepsilon vrx^2,
\quad
b=-\varepsilon,
\]
and
\[
A_0(x)
=
\frac{\varepsilon r}{2}
+2\varepsilon v(5r-4)x.
\]
For
\[
k(x)=\frac{d'(x)+A_0(x)}{d(x)},
\]
a direct calculation gives
\[
\bigl(d(x)Q_0(x)\bigr)'
-k(x)d(x)Q_0(x)
=
\gamma x,
\]
where
\[
\gamma
=
(2-3r)
\left(
\frac{\varepsilon^2r}{4}
+4\varepsilon v
\right).
\]
Hence, by Lemma~\ref{lem_regular_IVP},
system~\eqref{eq:C35_normalized} admits a local analytic first integral
with leading term $ZW$. Pulling it back through
\eqref{eq:one_sided_normalization} and normalizing the quadratic term
gives
\[
H_{26}(z,w)=zw+O(3).
\]
Therefore, the origin is a weakly persistent center.

Condition $\mathrm{F}_{26}(b)$ is the exchanged form of
$\mathrm{F}_{26}(a)$. Hence, by Lemma~\ref{lem_exchange}, the origin is
also a weakly persistent center.

When condition $\mathrm{F}_{27}(a)$ holds, system~\eqref{sys_general}
reduces to
\begin{equation}\label{eq:C37_reduced}
\begin{cases}
\displaystyle
\dot z
=
z-\frac{\varepsilon b_{30}}{2}zw^2,\\[1ex]
\displaystyle
\dot w
=
-w-\varepsilon\left(
b_{02}z^2+b_{30}w^3
\right).
\end{cases}
\end{equation}
Set
\[
\Phi_{27}(z,w)
=
1+\varepsilon b_{30}w^2
+2\varepsilon^2b_{02}b_{30}z^2w
+\frac{\varepsilon^3b_{02}^2b_{30}}{2}z^4.
\]
A direct calculation gives
\[
X\Phi_{27}
=
-2\varepsilon b_{30}w^2\Phi_{27},
\qquad
\operatorname{div}X
=
-\frac{7}{2}\varepsilon b_{30}w^2.
\]
Therefore,
\[
V_{27}(z,w)
=
\Phi_{27}(z,w)^{7/4}
\]
is an analytic inverse integrating factor near the origin. Since
$\Phi_{27}(0,0)=1$, Lemma~\ref{lem_inverse_integrating_factor}
yields a local analytic first integral
\[
H_{27}(z,w)=zw+O(3).
\]
Therefore, the origin is a weakly persistent center.

Condition $\mathrm{F}_{27}(b)$ is the exchanged form of
$\mathrm{F}_{27}(a)$. Hence, by Lemma~\ref{lem_exchange}, the origin is
also a weakly persistent center.

When condition $\mathrm{F}_{28}$ holds, system~\eqref{sys_general}
reduces to
\[
\begin{cases}
\displaystyle
\dot z
=
z+\varepsilon\left(
a_{30}z^3+a_{12}zw^2+a_{03}w^3
\right),\\[1ex]
\displaystyle
\dot w
=
-w-\varepsilon\left(
b_{30}w^3+b_{12}z^2w+b_{03}z^3
\right),
\end{cases}
\]
where
\[
a_{30}b_{30}\neq0,
\quad
3a_{12}=-b_{30},
\quad
3b_{12}=-a_{30},
\quad
9a_{03}b_{03}=4a_{30}b_{30}.
\]
Choose $\alpha,\beta\in\mathbb C\setminus\{0\}$ such that
\[
\alpha^2=a_{30},
\quad
\beta^2=b_{30},
\]
and set
\[
Z=\alpha z,\quad W=\beta w,
\quad
a=\frac{\alpha a_{03}}{\beta^3}.
\]
Since $9a_{03}b_{03}=4a_{30}b_{30}$ and
$a_{30}b_{30}\neq0$, we have $a\neq0$, and the system becomes
\begin{equation}\label{eq:C39_normalized}
\begin{cases}
\displaystyle
\dot Z
=
Z+\varepsilon\left(
Z^3-\frac13ZW^2+aW^3
\right),\\[1ex]
\displaystyle
\dot W
=
-W-\varepsilon\left(
W^3-\frac13Z^2W+\frac{4}{9a}Z^3
\right).
\end{cases}
\end{equation}

Let
\[
Q_0(Z,W)
=
\frac{a}{2}W^2
+\frac{2}{3}ZW
+\frac{2}{9a}Z^2,
\]
and define
\[
\Phi_{28}(Z,W)
=
1+\varepsilon\left[
\frac23(Z^2+W^2)
+\left(
a+\frac{4}{9a}
\right)ZW
\right]
+\varepsilon^2Q_0(Z,W)^2.
\]
A direct calculation gives
\[
X\Phi_{28}
=
\frac{4}{3}\varepsilon(Z^2-W^2)\Phi_{28}
=
\frac{2}{5}(\operatorname{div}X)\Phi_{28}.
\]
Hence,
\[
V_{28}(Z,W)
=
\Phi_{28}(Z,W)^{5/2}
\]
is an analytic inverse integrating factor near the origin. Since
$\Phi_{28}(0,0)=1$, Lemma~\ref{lem_inverse_integrating_factor}
gives a local analytic first integral with leading term $ZW$.
Because $ZW=\alpha\beta zw$ and $\alpha\beta\neq0$, pulling it back
and normalizing the quadratic term yields
\[
H_{28}(z,w)=zw+O(3).
\]
Therefore, the origin is a weakly persistent center.
\end{proof}

\begin{remark}
The completeness of the necessary parameter decomposition is established by exact computer-algebraic certificates provided in the Supplementary Material. More precisely, the obstruction ideal through order seven is decomposed by an exhaustive system of saturated algebraic charts, with every complementary zero locus treated separately, yielding $V(I_7)=V(I_{\infty})$. Thus no additional parameter components can arise from higher-order singular point quantities. The sufficiency of each resulting family is proved analytically below.
\end{remark}
\subsection{Reduction to the real cubic system}
\label{subsec_real_reduction}

The preceding classification is obtained for the general complex cubic
system, where the two sets of coefficients $a_{ij}$ and $b_{ij}$ are
independent. We now briefly explain its relation to the corresponding
real cubic system.

With the notation adopted in system~\eqref{sys_general}, impose the
conjugacy conditions
\begin{equation}\label{eq:conjugacy_condition}
\begin{aligned}
b_{20}&=\overline{a_{20}},&
b_{11}&=\overline{a_{11}},&
b_{02}&=\overline{a_{02}},\\
b_{30}&=\overline{a_{30}},&
b_{21}&=\overline{a_{21}},&
b_{12}&=\overline{a_{12}},&
b_{03}&=\overline{a_{03}}.
\end{aligned}
\end{equation}
After multiplying the vector field by $i$, system~\eqref{sys_general}
takes the form
\[
\begin{cases}
\dot z
=
iz+i\varepsilon F(z,w),\\
\dot w
=
-iw-i\varepsilon G(z,w),
\end{cases}
\]
where, under \eqref{eq:conjugacy_condition}, the second equation is the
complex conjugate of the first one on $w=\overline z$. Hence, setting
$z=x+iy$ and $w=x-iy$ reduces the complex system to a real planar
cubic differential system.

The correspondence between weakly persistent centers of complex
systems and those of the associated real systems under the conjugacy
condition is well known; see, for example, Chen et al.~\cite{CHEN2014}.
Therefore, the classification obtained above immediately yields the
following consequence.

\begin{corollary}\label{cor_real_cubic_center}
Under the conjugacy conditions \eqref{eq:conjugacy_condition}, the
origin of the corresponding real cubic system is a weakly persistent
center if and only if the coefficients satisfy one of the weakly
persistent center conditions given in the preceding theorem, restricted
to \eqref{eq:conjugacy_condition}.
\end{corollary}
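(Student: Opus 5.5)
The plan is to reduce the corollary to Theorem~\ref{thm:cubic_center} by showing that, on the conjugate parameter subspace \eqref{eq:conjugacy_condition}, the real and complex notions coincide for every value of the perturbation parameter. Fix coefficients satisfying \eqref{eq:conjugacy_condition} and consider the real system obtained from $z=x+iy$, $w=x-iy$. After the time rescaling $T=it$ described in Section~\ref{sec_pre}, it is exactly system~\eqref{sys_general} restricted to the invariant real slice $w=\overline z$. The coefficients $a_{ij}$, $b_{ij}$ do not depend on $\varepsilon$, and $\varepsilon$ enters only as a common factor of the nonlinear part. Hence for real $\varepsilon=u$ the real family is exactly $\dot z=iz+uF(z,\bar z)$ in the sense of Cima et al.

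\textbf{Step 1 (focal values versus singular point quantities).} I will invoke the correspondence recalled in Section~\ref{sec_pre}. Under the conjugacy relations, for each fixed real $\varepsilon$, the singular point quantities $g_m(\varepsilon)$ coincide with the focal values $v_{2m+1}(\varepsilon)$ of the real system up to nonzero normalization factors, modulo the ideal generated by the earlier ones. This is the standard Liu--Li algebraic equivalence, as used in Chen et al.~\cite{CHEN2014}. By Poincaré--Lyapunov, the real origin is a center if and only if all focal values vanish, and hence if and only if all $g_m(\varepsilon)=0$. Since each $g_m$ is a polynomial in $\varepsilon$, vanishing for all real $\varepsilon$ is equivalent to vanishing of all coefficients $G_{m,k}$. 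This is the same as vanishing for all complex $\varepsilon$. The real weakly persistent center property is therefore equivalent to $G_{m,k}=0$ for all $m,k$ at the given conjugate parameter point.

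\textbf{Step 2 (transfer of the classification).} The necessity part of Theorem~\ref{thm:cubic_center}, together with the Remark asserting $V(I_7)=V(I_\infty)$, gives the following: the parameter points where all $G_{m,k}$ vanish form exactly the union of the families F$_{01}$--F$_{28}$, each intersected with $a_{21}=b_{21}$ and $a_{11}a_{20}=b_{11}b_{20}$. Intersecting with the real-linear subspace \eqref{eq:conjugacy_condition} yields the stated restriction. This settles necessity.

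\textbf{Step 3 (sufficiency).} Conversely, suppose a conjugate point lies in one of the families. The sufficiency part of the theorem gives, for each $\varepsilon$, an analytic first integral $H=zw+O(3)$. Its existence forces all $g_m(\varepsilon)=0$, so by Step~1 the real system is a center for every real $\varepsilon$. Alternatively, one can argue directly from $H$. Both $H(z,w)$ and $\overline{H(\bar w,\bar z)}$ are first integrals, so their average restricted to $w=\bar z$ gives a real analytic first integral $x^2+y^2+\cdots$. This version also avoids any worry about whether the existential parameters $h$, $t$, $r$, etc.\ are themselves real.

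\textbf{Main obstacle.} The only subtle point is Step~1: justifying that the correspondence between saddle quantities and focal values holds uniformly in $\varepsilon$ and is an equivalence of vanishing, not merely a proportionality at first nonvanishing order. I would handle this by citing the ideal-equality statement $\langle g_1,\dots,g_m\rangle=\langle v_3,\dots,v_{2m+1}\rangle$ from the literature, applied with $\varepsilon$ treated as an extra parameter. Once that is in place, the rest is set-theoretic restriction of the classification.
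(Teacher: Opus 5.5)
Your proposal is correct and follows the same route as the paper. The paper cites Chen et al.\ for the correspondence between complex and real weakly persistent centers under \eqref{eq:conjugacy_condition}, and then restricts the classification of Theorem~\ref{thm:cubic_center} to the conjugate subspace. You fill in what that citation leaves implicit, and both additions are sound: passing from real $u$ to complex $\varepsilon$ through the polynomiality of $g_m$ in $\varepsilon$, and using the symmetrized first integral $\frac{1}{2}\bigl(H(z,w)+\overline{H(\bar w,\bar z)}\bigr)$ to obtain a real one.
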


Thus, the weakly persistent center problem for the corresponding
general real cubic system is obtained directly as the conjugate
restriction of the complex classification.
\section{Weakly persistent isochronous centers in the general complex cubic system} \label{sec_cubic_iso} 

We now investig ate the weakly persistent isochronous center problem for system~\eqref{sys_general}. Since analytic linearizability implies the existence of an analytic first integral, every weakly persistent isochronous center is necessarily a weakly persistent center. Hence, the weakly persistent center conditions obtained in the preceding section serve as the starting point for the present analysis.

Using the linearization quantities introduced in the preliminary section, we compute some linearization quantities at first, the finite collection of necessary conditions is decomposed completely into candidate algebraic components, and then every surviving component is also proved sufficient by an independent analytic argument. Then we obtain a complete classification of the weakly persistent isochronous centers of system~\eqref{sys_general}. In terms of the original parameters, the classification consists of $13$ basic families, which yield $21$ explicit parameter conditions after the corresponding exchanged cases are included. Our main result is stated as follows.

\begin{thm}\label{thm_cubic_iso}
The origin of system~\eqref{sys_general} is a weakly persistent
isochronous center if and only if
\[
a_{21}=b_{21}=0
\]
and one of the following conditions holds:
\begin{align*}
\text{I$_{01}$}\quad
& \begin{aligned}[t]
&a_{20}=a_{30}=b_{11}=b_{02}=b_{12}=b_{03}=0;
\end{aligned}
\\[1ex]
\text{I$_{02}$}\quad
& \begin{aligned}[t]
&b_{20}=b_{30}=a_{11}=a_{02}=a_{12}=a_{03}=0;
\end{aligned}
\\[1ex]
\text{I$_{03}$}\quad
& \begin{aligned}[t]
&a_{11}=a_{02}=a_{12}=a_{03}=b_{11}=b_{30}=0;
\end{aligned}
\\[1ex]
\text{I$_{04}$}\quad
& \begin{aligned}[t]
&b_{11}=b_{02}=b_{12}=b_{03}=a_{11}=a_{30}=0;
\end{aligned}
\\[1ex]
\text{I$_{05}$}\quad
& \begin{aligned}[t]
&a_{11}=a_{02}=a_{12}=a_{03}
=b_{11}=b_{02}=b_{12}=b_{03}=0;
\end{aligned}
\\[1ex]
\text{I$_{06}$}\quad
& \begin{aligned}[t]
&a_{11}=b_{11}=a_{02}=a_{03}=b_{03}
=a_{12}+b_{30}=a_{30}+b_{12}=0;
\end{aligned}
\\[1ex]
\text{I$_{07}$}\quad
& \begin{aligned}[t]
&a_{11}=b_{11}=b_{02}=b_{03}=a_{03}
=a_{30}+b_{12}=a_{12}+b_{30}=0;
\end{aligned}
\\[1ex]
\text{I$_{08}$}\quad
& \begin{aligned}[t]
&a_{20}=a_{02}=a_{12}=a_{03}
=b_{11}=b_{02}=b_{12}=b_{03}
=2a_{11}-b_{20}=0;
\end{aligned}
\\[1ex]
\text{I$_{09}$}\quad
& \begin{aligned}[t]
&b_{20}=b_{02}=b_{12}=b_{03}
=a_{11}=a_{02}=a_{12}=a_{03}
=2b_{11}-a_{20}=0;
\end{aligned}
\\[1ex]
\text{I$_{10}$}\quad
& \begin{aligned}[t]
&a_{20}=a_{02}=a_{12}=a_{03}
=b_{11}=b_{30}=b_{03}
=2a_{11}+b_{20}=2a_{30}+b_{12}=0;
\end{aligned}
\\[1ex]
\text{I$_{11}$}\quad
& \begin{aligned}[t]
&b_{20}=b_{02}=b_{12}=b_{03}
=a_{11}=a_{30}=a_{03}
=a_{20}+2b_{11}=a_{12}+2b_{30}=0;
\end{aligned}
\\[1ex]
\text{I$_{12}$}\quad
& \begin{aligned}[t]
&a_{20}=a_{02}=a_{30}=a_{12}=a_{03}
=b_{11}=b_{02}=b_{30}=b_{12}
=3a_{11}+b_{20}=0;
\end{aligned}
\\[1ex]
\text{I$_{13}$}\quad
& \begin{aligned}[t]
&b_{20}=b_{02}=b_{30}=b_{12}=b_{03}
=a_{11}=a_{02}=a_{30}=a_{12}
=a_{20}+3b_{11}=0;
\end{aligned}
\\[1ex]
\text{I$_{14}$}\quad
& \begin{aligned}[t]
&a_{20}=a_{02}=a_{30}=a_{03}
=b_{11}=b_{02}=b_{12}
=3a_{11}-b_{20}=3a_{12}+b_{30}=0;
\end{aligned}
\\[1ex]
\text{I$_{15}$}\quad
& \begin{aligned}[t]
&b_{20}=b_{02}=b_{30}=b_{03}
=a_{11}=a_{02}=a_{12}
=3b_{11}-a_{20}=a_{30}+3b_{12}=0;
\end{aligned}
\\[1ex]
\text{I$_{16}$}\quad
& \begin{aligned}[t]
&a_{20}=a_{11}=a_{02}=a_{30}=a_{03}
=b_{20}=b_{11}=b_{12}=b_{03}
=2a_{12}+b_{30}=0;
\end{aligned}
\\[1ex]
\text{I$_{17}$}\quad
& \begin{aligned}[t]
&b_{20}=b_{11}=b_{02}=b_{30}=b_{03}
=a_{20}=a_{11}=a_{12}=a_{03}
=a_{30}+2b_{12}=0;
\end{aligned}
\\[1ex]
\text{I$_{18}$}\quad
& \begin{aligned}[t]
&a_{02}=b_{02}=a_{03}=b_{03}
=a_{11}+b_{20}=a_{20}+b_{11}
=a_{12}+b_{30}=a_{30}+b_{12}
\\
& \quad \ =a_{30}b_{20}^{2}-a_{20}^{2}b_{30}=0;
\end{aligned}
\\[1ex]
\text{I$_{19}$}\quad
& \begin{aligned}[t]
&a_{20}b_{20}\neq0, \enspace
a_{30}=a_{12}=a_{03}=b_{30}=b_{12}=b_{03}
=7a_{11}+6b_{20}
=6a_{20}+7b_{11}
\\
&\quad \ =7a_{02}a_{20}-3b_{20}^{2}=-3a_{20}^{2}+7b_{02}b_{20}=0;
\end{aligned}
\\[1ex]
\text{I$_{20}$}\quad
& \begin{aligned}[t]
&a_{20}b_{20}\neq0, \enspace
a_{30}=a_{12}=a_{03}=b_{30}=b_{12}=b_{03}
=5a_{11}+2b_{20}
=2a_{20}+5b_{11}
\\
&\quad \ =5a_{02}a_{20}+3b_{20}^{2}
=3a_{20}^{2}+5b_{02}b_{20}=0;
\end{aligned}
\\[1ex]
\text{I$_{21}$}\quad
& \begin{aligned}[t]
&a_{30}b_{30}\neq0, \enspace
a_{20}=a_{11}=a_{02}=b_{20}=b_{11}=b_{02}
=7a_{12}+3b_{30}
=3a_{30}+7b_{12}
\\
&\quad=49a_{03}b_{03}-16a_{30}b_{30}
=a_{03}a_{30}^{2}-b_{03}b_{30}^{2}=0.
\end{aligned}
\end{align*}
In each case, the displayed relations are imposed together with
$a_{21}=b_{21}=0$, while all coefficients not explicitly restricted
are arbitrary complex numbers.
\end{thm}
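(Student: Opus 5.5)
The plan follows the two-part structure used for Theorem~\ref{thm:cubic_center}: necessity by algebra on the linearization quantities, sufficiency by explicit time-preserving linearizations.

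\textbf{Necessity.} Every weakly persistent isochronous center is a weakly persistent center. So I start from the decomposition $F_{01}$--$F_{28}$ of Theorem~\ref{thm:cubic_center} and intersect each component with the varieties $L^{\pm}_{m,k}=0$.

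The first linearization quantities should have the form
\[
L_1^{\pm}(\varepsilon)=c^{\pm}_1a_{21}\varepsilon+(\text{quadratic terms})\varepsilon^2 .
\]
Their $\varepsilon$-linear parts force $a_{21}=0$, and with $a_{21}=b_{21}$ this gives $a_{21}=b_{21}=0$. This step matches the structure of $g_1$.

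On each $F_j$ I then compute $L^{\pm}_2,\dots,L^{\pm}_7$, split by $\varepsilon$-degree, in the normalized coordinates \eqref{eq:common_normalization} or their one-sided analogues. I solve the resulting systems with the same saturated-chart technique as in the Remark. For example, on $F_{09}$--$F_{11}$ I expect the conditions to collapse to the quadratic families $I_{19}$ and $I_{20}$ (the generic $t=-6/7$ and $t=-2/5$ points). The cubic-only component $F_{28}$ should shrink to $I_{21}$, and $F_{04}$ with $t=-1$ should give $I_{18}$.

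Exchanged components are handled by Lemma~\ref{lem_exchange}. Completeness is certified by checking $V(I_7)=V(I_\infty)$ on the union, exactly as for the center problem.

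\textbf{Sufficiency.} For each $I_j$ I exhibit analytic $U=z+O(2)$ and $V=w+O(2)$ with $XU=U$ and $XV=-V$. By Lemma~\ref{lem_first_integral_linearization}, it suffices to find one of them, since each $I_j$ lies in some $F_k$ whose first integral is already available.

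\emph{Triangular families ($I_{01}$--$I_{05}$, $I_{08}$--$I_{17}$).} Here one equation is autonomous in a single variable, or becomes so after a substitution. Lemma~\ref{lem_one_dim_linearization} then gives $\Phi(z)$ or $\Psi(w)$ directly, and the other coordinate comes from Lemma~\ref{lem_first_integral_linearization}.
\begin{itemize}
\item $I_{01}$: $\dot w=-w(1+\varepsilon b_{20}w+\varepsilon b_{30}w^2)$, so $V=\Psi(w)$.
\item $I_{05}$: both $\Phi(z)$ and $\Psi(w)$ are available, as in $F_{08}$.
\item $I_{08}$, $I_{10}$, $I_{12}$, $I_{14}$, $I_{16}$: $\dot z=z\,g(w)$ or $\dot z=z(\cdots)$. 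I look for $U=z\,e^{\phi(w)}$, or $U=z\,\cdot$(a power of the Darboux factor $W_{19}$, $W_{20}$ or $\Phi_{27}$ found earlier). With $a_{21}=0$ the exponent should be tuned so that $XU=U$ holds exactly.
\end{itemize}

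\emph{Family $I_{18}$.} It sits inside $F_{04}$ with $t=-1$, $S=0$. Using $q=zw$ and the Darboux factor $F=1+\varepsilon R$ (which satisfies $\dot F=\varepsilon KF$), I try $U=z\,F^{\alpha}$ and $V=w\,F^{\beta}$ and solve for $\alpha,\beta$. The relation $a_{30}b_{20}^2=a_{20}^2b_{30}$ should be what makes $XU/U$ constant.

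\emph{Families $I_{06}$, $I_{07}$, $I_{19}$--$I_{21}$.} These are the harder ones. I would build a linearizing coordinate from the inverse integrating factors already found (for $F_{03}$, $F_{09}$ and $F_{28}$), in the ansatz $U=z\prod f_i^{\alpha_i}$ with the invariant curves $f_i$ read off from the factored $V$.

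If no closed form appears, the fallback is Lemma~\ref{lem_linearization_extension}. I would prove linearizability on a Zariski-dense subset, for instance by an explicit Darboux linearization for generic parameters, or by showing the formal obstructions vanish there. Brjuno's theorem then gives convergence on the whole irreducible family.

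\textbf{Main obstacle.} I expect the main obstacle to be the necessity part: the exhaustive decomposition of the linearization-quantity ideal restricted to the 28 center components, with all degenerate charts. On the sufficiency side, the hardest cases should be the non-triangular families $I_{19}$--$I_{21}$, where finding the Darboux exponents is likely to be the bottleneck.
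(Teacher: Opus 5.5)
Your overall approach is the paper's: you restrict $L^{\pm}_{m,k}$ to the center components, and for sufficiency you produce one eigenfunction and apply Lemma~\ref{lem_first_integral_linearization}. Several of your constructions coincide with the paper's:
the autonomous equation for $\mathrm{I}_{01}$--$\mathrm{I}_{05}$ and $\mathrm{I}_{08}$;
$U=zW_{19}^{-1/2}$, $zW_{20}^{-1/3}$, $z\Phi_{27}^{-1/4}$ for $\mathrm{I}_{10}$, $\mathrm{I}_{12}$, $\mathrm{I}_{16}$ (these factors are exactly the paper's $L$);
invariant lines plus Lemma~\ref{lem_linearization_extension} for $\mathrm{I}_{06}$.

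The gaps come from placing the remaining families in the wrong center components. In the normalization \eqref{eq:common_normalization}:
\begin{itemize}
\item $\mathrm{F}_{09}$--$\mathrm{F}_{11}$ have $t=-\frac12$ or $t=\frac14$, so they cannot contain $\mathrm{I}_{19}$ ($t=-\frac67$, $A=B=\frac37$) or $\mathrm{I}_{20}$ ($t=-\frac25$, $A=B=-\frac35$).
\item $\mathrm{F}_{28}$ has $3a_{12}+b_{30}=0$, whereas $\mathrm{I}_{21}$ has $7a_{12}+3b_{30}=0$.
\item $\mathrm{F}_{04}$ at $t=-1$ forces $a_{12}=-3b_{30}$, whereas $\mathrm{I}_{18}$ has $a_{12}=-b_{30}$.
\end{itemize}
On their generic parts all four families lie in the reversible family $\mathrm{F}_{02}$.

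\textbf{$\mathrm{I}_{18}$.} This misplacement breaks your construction. The field is $\dot z=z(1+\varepsilon P)$, $\dot w=-w(1-\varepsilon P)$ with $P=a_{20}z-b_{20}w+a_{30}z^2-b_{30}w^2$. Take the $\mathrm{F}_{04}$ quantities $R=a_{20}z+b_{20}w+a_{30}z^2+b_{30}w^2$, $K=a_{20}z-b_{20}w+2a_{30}z^2-2b_{30}w^2$ and $F=1+\varepsilon R$. Then $XF-\varepsilon KF=2\varepsilon^2zw\bigl(a_{20}b_{30}w-a_{30}b_{20}z\bigr)$. So $F$ is not even invariant unless $a_{20}b_{30}=a_{30}b_{20}=0$. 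Even where it is invariant, the needed exponent changes: $U=zF^{-1}$ when $a_{30}=b_{30}=0$, but $U=zF^{-1/2}$ when $a_{20}=b_{20}=0$. Hence no choice of $\alpha,\beta$ in $zF^{\alpha}$, $wF^{\beta}$ works on the mixed family. The missing idea is the identity $\dot z/z-\dot w/w\equiv2$ on $\mathrm{I}_{18}$. Writing the center integral as $H=zwh_0$, the functions $U=z\sqrt{h_0}$, $V=w\sqrt{h_0}$ satisfy $XU/U=\frac12\bigl(X\log H+X\log(z/w)\bigr)=1$ and $XV=-V$.

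\textbf{$\mathrm{I}_{14}$/$\mathrm{I}_{15}$.} You list these as triangular, but they are not. On $\mathrm{I}_{14}$ (which is exactly $\mathrm{F}_{21}(a)$) the field is $\dot z=z(1+Aw+Bw^2)$, $\dot w=-w-3Aw^2+3Bw^3-Cz^3$, with $A=\varepsilon a_{11}$, $B=\varepsilon a_{12}$, $C=\varepsilon b_{03}$. For $U=ze^{\phi(w)}$, the quotient $XU/U$ contains the term $-C\phi'(w)z^3$, which forces $\phi'\equiv0$ as soon as $b_{03}\neq0$. The factors $W_{19},W_{20},\Phi_{27}$ belong to other families, and the $\mathrm{F}_{21}$ center proof is a series that supplies no Darboux factor.

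The paper instead builds $V=\sum_{m\ge0}f_m(w)z^{3m}$. Here $f_0$ comes from Lemma~\ref{lem_one_dim_linearization}, and the higher terms satisfy $-wdf_m'+[3m(1+Aw+Bw^2)+1]f_m=Cf_{m-1}'$ with $d=1+3Aw-3Bw^2$. The divisors $3m+2-k\ge2m+2$ give geometric bounds and hence convergence. Your fallback cannot replace this: Lemma~\ref{lem_linearization_extension} presupposes all-order linearizability on a dense subset, which is exactly what is unproved here.

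\textbf{$\mathrm{I}_{19}$--$\mathrm{I}_{21}$.} Your plan was to read invariant curves off the $\mathrm{F}_{09}$ and $\mathrm{F}_{28}$ inverse integrating factors, but these families do not contain them. Their true host $\mathrm{F}_{02}$ was handled by a non-explicit regular-IVP argument, so there is no inverse integrating factor to factor. The paper writes explicit coordinates in $x=\zeta+\eta$, $y=\zeta-\eta$. For example, for $\mathrm{I}_{19}$ the functions $u=x(1+ax/2)/(1+ax)^2$ and $v=y/(1+ax)^2$ satisfy $\dot u=v$, $\dot v=u$.
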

\begin{proof}
We first prove the necessity. Suppose that the origin of
system~\eqref{sys_general} is a weakly persistent isochronous center.
Then, by the definition of the linearization quantities introduced in
the preliminary section,
\[
L_m^{+}(\varepsilon)\equiv0,
\qquad
L_m^{-}(\varepsilon)\equiv0,
\qquad m\geq1.
\]
Since each $L_m^{\pm}(\varepsilon)$ is a polynomial in
$\varepsilon$, all of its coefficients must vanish.

The first pair of linearization quantities is
\begin{align*}
L_1^{+}(\varepsilon)
&=
a_{21}\varepsilon
-
\left[
a_{11}(a_{20}+b_{11})
+\frac{2}{3}a_{02}b_{02}
\right]\varepsilon^2,\\
L_1^{-}(\varepsilon)
&=
-b_{21}\varepsilon
+
\left[
b_{11}(b_{20}+a_{11})
+\frac{2}{3}a_{02}b_{02}
\right]\varepsilon^2.
\end{align*}
Hence,
\[
a_{21}=b_{21}=0,\quad
3a_{11}(a_{20}+b_{11})+2a_{02}b_{02}=0,
\quad
3b_{11}(b_{20}+a_{11})+2a_{02}b_{02}=0.
\]
Subtracting the last two relations gives
\[
a_{11}a_{20}=b_{11}b_{20}.
\]
Thus, the first pair of linearization quantities yields the common
necessary relations
\[
a_{21}=b_{21}=0,
\qquad
a_{11}a_{20}=b_{11}b_{20}.
\]

As observed above, every weakly persistent isochronous center is
necessarily a weakly persistent center. Hence, by the weakly persistent
center classification obtained in the preceding section, it remains to
impose the linearization conditions on the center families listed there.

Substituting these center conditions into the coefficients of
\[
L_m^{\pm}(\varepsilon),
\qquad 1\leq m\leq7,
\]
and solving the resulting polynomial systems by exact algebraic
decomposition, together with the corresponding nonzero restrictions
and boundary cases, yields precisely the $21$ parameter conditions
$\mathrm{I}_{01}$--$\mathrm{I}_{21}$ listed in the theorem.

Therefore, the parameters must satisfy
\[
a_{21}=b_{21}=0
\]
together with one of
$\mathrm{I}_{01}$--$\mathrm{I}_{21}$.
This proves the necessity.

We next prove the sufficiency. Fix an arbitrary admissible value of the
common perturbation parameter $\varepsilon$. Throughout the proof, the
common condition
\[
a_{21}=b_{21}=0
\]
is understood. Whenever a condition below belongs to a weakly persistent
center family established in the preceding section, we use the
corresponding analytic first integral
\[
H(z,w)=zw+O(3).
\]

When condition $\mathrm{I}_{01}$ holds,
system~\eqref{sys_general} can be rewritten as
\[
\begin{cases}
\dot z
=
z+\varepsilon
\left(
a_{11}zw+a_{02}w^2+a_{12}zw^2+a_{03}w^3
\right),\\[1ex]
\dot w
=
-w-\varepsilon
\left(
b_{20}w^2+b_{30}w^3
\right).
\end{cases}
\]
The second equation is autonomous. Hence, by
Lemma~\ref{lem_one_dim_linearization}, there exists an analytic function
$V=w+O(2)$ satisfying $XV=-V$. By the weakly persistent center result
proved in the preceding section, the system admits an analytic first
integral $H=zw+O(3)$. Therefore, Lemma~\ref{lem_first_integral_linearization}
gives a time-preserving analytic linearization. Thus, the origin is a
weakly persistent isochronous center.

\medskip

When condition $\mathrm{I}_{02}$ holds, it is obtained from
condition $\mathrm{I}_{01}$ by exchanging the two coefficient sets.
Hence, by Lemma~\ref{lem_exchange}, the origin is also a weakly
persistent isochronous center.

\medskip

When condition $\mathrm{I}_{03}$ holds,
system~\eqref{sys_general} can be rewritten as
\[
\begin{cases}
\dot z
=
z+\varepsilon
\left(
a_{20}z^2+a_{30}z^3
\right),\\[1ex]
\dot w
=
-w-\varepsilon
\left(
b_{20}w^2+b_{02}z^2+b_{12}z^2w+b_{03}z^3
\right).
\end{cases}
\]
The first equation is autonomous. By
Lemma~\ref{lem_one_dim_linearization}, there exists an analytic function
$U=z+O(2)$ satisfying $XU=U$. Moreover, the preceding center result
provides an analytic first integral $H=zw+O(3)$. The symmetric part of
Lemma~\ref{lem_first_integral_linearization} therefore gives the
required analytic linearization. Hence, the origin is a weakly
persistent isochronous center.

\medskip

When condition $\mathrm{I}_{04}$ holds, it is obtained from
condition $\mathrm{I}_{03}$ by exchanging the two coefficient sets.
Hence, the conclusion follows from Lemma~\ref{lem_exchange}.

\medskip

When condition $\mathrm{I}_{05}$ holds,
system~\eqref{sys_general} can be rewritten as
\[
\begin{cases}
\dot z
=
z+\varepsilon
\left(
a_{20}z^2+a_{30}z^3
\right),\\[1ex]
\dot w
=
-w-\varepsilon
\left(
b_{20}w^2+b_{30}w^3
\right).
\end{cases}
\]
Both equations are autonomous. Applying
Lemma~\ref{lem_one_dim_linearization} to the two equations separately
gives analytic functions $U=z+O(2)$ and $V=w+O(2)$ satisfying
\[
XU=U,
\qquad
XV=-V.
\]
Therefore, $(U,V)$ is a time-preserving analytic linearization, and the
origin is a weakly persistent isochronous center.

\medskip

When condition $\mathrm{I}_{06}$ holds,
system~\eqref{sys_general} can be rewritten as
\[
\begin{cases}
\dot z
=
z+\varepsilon
\left(
a_{20}z^2+a_{30}z^3-b_{30}zw^2
\right),\\[1ex]
\dot w
=
-w-\varepsilon
\left(
b_{20}w^2+b_{02}z^2+b_{30}w^3-a_{30}z^2w
\right).
\end{cases}
\]
Set
\[
A=\varepsilon a_{20},
\qquad
B=\varepsilon b_{20},
\qquad
C=\varepsilon a_{30},
\qquad
D=\varepsilon b_{30},
\qquad
E=\varepsilon b_{02}.
\]
Then the system reduces to
\begin{equation}\label{eq:I06_system}
\begin{cases}
\dot z=z(1+Az+Cz^2-Dw^2),\\
\dot w=-w-Bw^2-Ez^2-Dw^3+Cz^2w.
\end{cases}
\end{equation}
Let $(p,q)$ satisfy
\begin{equation}\label{eq:I06_pq}
q^2-Bq+D=0,
\qquad
p^2-Ap+C+qE=0,
\end{equation}
and define $L_{p,q}=1+pz+qw$. A direct calculation using
\eqref{eq:I06_system} and \eqref{eq:I06_pq} gives
\begin{equation}\label{eq:I06_darboux}
XL_{p,q}
=
\left(
pz-qw+Cz^2-Dw^2
\right)L_{p,q}.
\end{equation}

For generic parameter values, the corresponding roots are distinct and
three linearly independent cofactors can be chosen. Hence, there exist
constants $\gamma_{p,q}$ satisfying
\[
\sum\gamma_{p,q}=1,
\qquad
\sum\gamma_{p,q}p=A,
\qquad
\sum\gamma_{p,q}q=0.
\]
It follows from \eqref{eq:I06_darboux} that
\[
U
=
z\prod L_{p,q}^{-\gamma_{p,q}}
\]
is analytic near the origin and satisfies $XU=U$. The preceding center
result gives an analytic first integral $H=zw+O(3)$, so
Lemma~\ref{lem_first_integral_linearization} yields an analytic
linearization on this generic parameter subset.

The parameter family defined by $\mathrm{I}_{06}$ is affine and hence
irreducible. Therefore, the remaining degenerate parameter values,
including the cases in which some roots in \eqref{eq:I06_pq} coincide,
follow from Lemma~\ref{lem_linearization_extension}. Thus, the origin is
a weakly persistent isochronous center.

\medskip

When condition $\mathrm{I}_{07}$ holds, it is obtained from
condition $\mathrm{I}_{06}$ by exchanging the two coefficient sets.
Hence, by Lemma~\ref{lem_exchange}, the origin is a weakly persistent
isochronous center.

\medskip

When condition $\mathrm{I}_{08}$ holds,
system~\eqref{sys_general} can be rewritten as
\[
\begin{cases}
\dot z
=
z+\varepsilon
\left(
a_{11}zw+a_{30}z^3
\right),\\[1ex]
\dot w
=
-w-\varepsilon
\left(
2a_{11}w^2+b_{30}w^3
\right).
\end{cases}
\]
The second equation is autonomous. Hence,
Lemma~\ref{lem_one_dim_linearization} gives an analytic function
$V=w+O(2)$ satisfying $XV=-V$. Since the preceding center result
provides an analytic first integral $H=zw+O(3)$,
Lemma~\ref{lem_first_integral_linearization} gives the desired analytic
linearization. Therefore, the origin is a weakly persistent isochronous
center.

\medskip

When condition $\mathrm{I}_{09}$ holds, it is obtained from
condition $\mathrm{I}_{08}$ by exchanging the two coefficient sets.
Hence, the conclusion follows from Lemma~\ref{lem_exchange}.

\medskip

When condition $\mathrm{I}_{10}$ holds, set
\[
A=\varepsilon a_{11},
\qquad
B=\varepsilon a_{30},
\qquad
C=\varepsilon b_{02}.
\]
Then system~\eqref{sys_general} can be rewritten as
\begin{equation*}
\begin{cases}
\dot z=z(1+Aw+Bz^2),\\
\dot w=-w+2Aw^2+2Bz^2w-Cz^2.
\end{cases}
\end{equation*}
Define
\begin{equation*}
L=1-2Aw+(B-AC)z^2,
\qquad
F=w+\frac{C}{3}z^2.
\end{equation*}
A direct calculation gives
\[
XL=2(Aw+Bz^2)L,
\qquad
XF=(-1+2Aw+2Bz^2)F.
\]
Thus, with
\[
U=zL^{-1/2},
\qquad
V=FL^{-1},
\]
we obtain
\[
XU=U,
\qquad
XV=-V.
\]
Since $L(0,0)=1$, the required analytic branches are well defined near
the origin. Hence, $(U,V)$ gives a time-preserving analytic
linearization, and the origin is a weakly persistent isochronous
center.

\medskip

When condition $\mathrm{I}_{11}$ holds, it is obtained from
condition $\mathrm{I}_{10}$ by exchanging the two coefficient sets.
Hence, its sufficiency follows from Lemma~\ref{lem_exchange}.

\medskip

When condition $\mathrm{I}_{12}$ holds, set
\[
A=\varepsilon a_{11},
\qquad
C=\varepsilon b_{03}.
\]
Then system~\eqref{sys_general} can be rewritten as
\begin{equation*}
\begin{cases}
\dot z=z(1+Aw),\\
\dot w=-w+3Aw^2-Cz^3.
\end{cases}
\end{equation*}
Define
\begin{equation*}
L=1-3Aw-ACz^3,
\qquad
F=w+\frac{C}{4}z^3.
\end{equation*}
A direct calculation yields
\[
XL=3AwL,
\qquad
XF=(-1+3Aw)F.
\]
Therefore,
\[
U=zL^{-1/3},
\qquad
V=FL^{-1}
\]
satisfy
\[
XU=U,
\qquad
XV=-V.
\]
Since $L(0,0)=1$, both functions are analytic near the origin.
Consequently, the origin is a weakly persistent isochronous center.

\medskip

When condition $\mathrm{I}_{13}$ holds, it is obtained from
condition $\mathrm{I}_{12}$ by exchanging the two coefficient sets.
Hence, its sufficiency follows from Lemma~\ref{lem_exchange}.

\medskip

When condition $\mathrm{I}_{14}$ holds, set
\[
A=\varepsilon a_{11},
\qquad
B=\varepsilon a_{12},
\qquad
C=\varepsilon b_{03}.
\]
Then system~\eqref{sys_general} can be rewritten as
\begin{equation}\label{eq:I14_system}
\begin{cases}
\dot z=z(1+Aw+Bw^2),\\
\dot w=-w-3Aw^2+3Bw^3-Cz^3.
\end{cases}
\end{equation}
Put
\[
d(w)=1+3Aw-3Bw^2
\]
and define
\begin{equation*}
f_0(w)
=
w\exp\left(
\int_0^w
\frac{d(s)^{-1}-1}{s}\,ds
\right).
\end{equation*}
We seek an analytic function of the form
\[
V(z,w)
=
\sum_{m=0}^{\infty}f_m(w)z^{3m}
\]
satisfying $XV=-V$. Substituting this series into
\eqref{eq:I14_system} gives
\begin{equation}\label{eq:I14_recursion}
-wd\,f_m'
+
\left[
3m(1+Aw+Bw^2)+1
\right]f_m
=
Cf_{m-1}',
\qquad
m\geq1.
\end{equation}
Since
\[
3(1+Aw+Bw^2)=3d-2wd',
\]
the equations in \eqref{eq:I14_recursion} can be solved recursively.
Set $P_1=C/4$. For $m\geq2$, write
\[
N_m
=
\left[
1-d-(2m-2)wd'
\right]P_{m-1}
+
wdP_{m-1}'
=
\sum_{k=1}^{m}n_{m,k}w^k
\]
and define
\[
P_m
=
C\sum_{k=1}^{m}
\frac{n_{m,k}}{3m+2-k}w^{k-1},
\qquad
f_m
=
\frac{f_0}{w}d^{-2m}P_m.
\]
Since
\[
3m+2-k\geq2m+2,
\qquad
1\leq k\leq m,
\]
no denominator vanishes. Moreover, on a sufficiently small disk the
recurrence gives at most geometric growth of the coefficients.
Therefore, the series defining $V$ converges normally and gives an
analytic function
\[
V=w+O(2),
\qquad
XV=-V.
\]
The preceding center result provides an analytic first integral
$H=zw+O(3)$. Hence, by
Lemma~\ref{lem_first_integral_linearization}, the system is analytically
linearizable. Therefore, the origin is a weakly persistent isochronous
center.

\medskip

When condition $\mathrm{I}_{15}$ holds, it is obtained from
condition $\mathrm{I}_{14}$ by exchanging the two coefficient sets.
Hence, its sufficiency follows from Lemma~\ref{lem_exchange}.

\medskip

When condition $\mathrm{I}_{16}$ holds, set
\[
A=\varepsilon a_{12},
\qquad
C=\varepsilon b_{02}.
\]
Then system~\eqref{sys_general} can be rewritten as
\begin{equation*}
\begin{cases}
\dot z=z(1+Aw^2),\\
\dot w=-w+2Aw^3-Cz^2.
\end{cases}
\end{equation*}
Define
\begin{equation*}
L
=
1-2Aw^2-4ACz^2w-AC^2z^4,
\qquad
F=w+\frac{C}{3}z^2.
\end{equation*}
A direct calculation gives
\[
XL=4Aw^2L,
\qquad
XF=(-1+2Aw^2)F.
\]
Thus,
\[
U=zL^{-1/4},
\qquad
V=FL^{-1/2}
\]
satisfy
\[
XU=U,
\qquad
XV=-V.
\]
Since $L(0,0)=1$, the required analytic branches exist near the origin.
Hence, the origin is a weakly persistent isochronous center.

\medskip

When condition $\mathrm{I}_{17}$ holds, it is obtained from
condition $\mathrm{I}_{16}$ by exchanging the two coefficient sets.
Hence, its sufficiency follows from Lemma~\ref{lem_exchange}.

\medskip

When condition $\mathrm{I}_{18}$ holds,
system~\eqref{sys_general} can be rewritten as
\begin{equation*}
\begin{cases}
\displaystyle
\dot z
=
z\left[
1+\varepsilon
\left(
a_{20}z-b_{20}w+a_{30}z^2-b_{30}w^2
\right)
\right],\\[1ex]
\displaystyle
\dot w
=
-w\left[
1-\varepsilon
\left(
a_{20}z-b_{20}w+a_{30}z^2-b_{30}w^2
\right)
\right].
\end{cases}
\end{equation*}
Therefore,
\begin{equation}\label{eq:I18_log_difference}
\frac{\dot z}{z}
-
\frac{\dot w}{w}
=
2.
\end{equation}
By the weakly persistent center result established in the preceding
section, the system admits an analytic first integral of the form
\[
H=zw\,h_0(z,w),
\qquad
h_0(0,0)=1.
\]
Taking the analytic branch of $\sqrt{h_0}$ satisfying
$\sqrt{h_0(0,0)}=1$, define
\[
U=z\sqrt{h_0},
\qquad
V=w\sqrt{h_0}.
\]
Using $XH=0$ and \eqref{eq:I18_log_difference}, we obtain
\[
\frac{XU}{U}
=
\frac12
\left(
\frac{\dot z}{z}
-
\frac{\dot w}{w}
\right)
=1,
\]
and similarly $XV/V=-1$. Hence,
\[
XU=U,
\qquad
XV=-V.
\]
Thus, $(U,V)$ is a time-preserving analytic linearization and the
origin is a weakly persistent isochronous center.

\medskip

When condition $\mathrm{I}_{19}$ holds, since
$a_{20}b_{20}\neq0$, set
\[
h=\frac{b_{20}}{a_{20}},
\qquad
\zeta=z,
\qquad
\eta=hw.
\]
Then system~\eqref{sys_general} can be rewritten as
\begin{equation}\label{eq:I19_scaled}
\begin{cases}
\displaystyle
\dot\zeta
=
\zeta+\varepsilon a_{20}
\left(
\zeta^2-\frac67\zeta\eta+\frac37\eta^2
\right),\\[1ex]
\displaystyle
\dot\eta
=
-\eta-\varepsilon a_{20}
\left(
\eta^2-\frac67\zeta\eta+\frac37\zeta^2
\right).
\end{cases}
\end{equation}
Set
\[
x=\zeta+\eta,
\qquad
y=\zeta-\eta,
\qquad
a=\frac47\varepsilon a_{20}.
\]
Then \eqref{eq:I19_scaled} reduces to
\begin{equation}\label{eq:I19_xy}
\dot x=y(1+ax),
\qquad
\dot y=x+\frac{a}{2}x^2+2ay^2.
\end{equation}
Define
\begin{equation*}
u
=
\frac{x(1+ax/2)}{(1+ax)^2},
\qquad
v
=
\frac{y}{(1+ax)^2}.
\end{equation*}
A direct calculation using \eqref{eq:I19_xy} gives
\begin{equation}\label{eq:uv_linear}
\dot u=v,
\qquad
\dot v=u.
\end{equation}
Hence, setting
\begin{equation}\label{eq:uv_to_eigen}
\widehat Z=\frac{u+v}{2},
\qquad
\widehat W=\frac{u-v}{2},
\end{equation}
we obtain
\[
\dot{\widehat Z}=\widehat Z,
\qquad
\dot{\widehat W}=-\widehat W.
\]
Since $\widehat Z=\zeta+O(2)$ and
$\widehat W=\eta+O(2)$, the transformation
\[
Z=\widehat Z,
\qquad
W=h^{-1}\widehat W
\]
is near identity in the original variables. Therefore, the origin is a
weakly persistent isochronous center.

When condition $\mathrm{I}_{20}$ holds, set
\[
h=\frac{b_{20}}{a_{20}},
\qquad
\zeta=z,
\qquad
\eta=hw.
\]
Then system~\eqref{sys_general} can be rewritten as
\begin{equation*}
\begin{cases}
\displaystyle
\dot\zeta
=
\zeta+\varepsilon a_{20}
\left(
\zeta^2-\frac25\zeta\eta-\frac35\eta^2
\right),\\[1ex]
\displaystyle
\dot\eta
=
-\eta-\varepsilon a_{20}
\left(
\eta^2-\frac25\zeta\eta-\frac35\zeta^2
\right).
\end{cases}
\end{equation*}
Set
\[
x=\zeta+\eta,
\qquad
y=\zeta-\eta,
\qquad
a=\frac85\varepsilon a_{20}.
\]
Then
\begin{equation}\label{eq:I20_xy}
\dot x=y(1+ax),
\qquad
\dot y=x+\frac{a}{4}y^2.
\end{equation}
If $a=0$, the system is already linear. Suppose $a\neq0$, and take
the analytic branch
\[
r=(1+ax)^{1/4},
\qquad
r(0)=1.
\]
Define
\begin{equation*}
u
=\frac{4}{a}(1-r^{-1}),
\qquad
v=yr^{-1},
\qquad
s
=
\frac{2}{a}(r-r^{-1}).
\end{equation*}
A direct calculation using \eqref{eq:I20_xy} gives
\begin{equation*}
\dot s=\frac{v}{1+\chi(s)},
\qquad
\dot v=\frac{s}{1+\chi(s)},
\end{equation*}
where
\[
u
=
s+\frac{4}{a}
\left(
1-\sqrt{1+\frac{a^2s^2}{16}}
\right),
\qquad
\chi(s)=\frac{du}{ds}-1.
\]
The function $\chi$ is analytic and odd. Set
\[
\xi=\frac{s+v}{2},
\qquad
\eta_1=\frac{s-v}{2},
\qquad
\mathcal L
=
\xi\frac{\partial}{\partial\xi}
-
\eta_1\frac{\partial}{\partial\eta_1}.
\]
Writing
\[
\chi(\xi+\eta_1)
=
\sum_{i,j}c_{ij}\xi^i\eta_1^j,
\]
the oddness of $\chi$ implies that $i+j$ is odd whenever
$c_{ij}\neq0$, and hence $i-j\neq0$. Thus,
\[
\varphi
=
\sum_{i,j}
\frac{c_{ij}}{i-j}\xi^i\eta_1^j
\]
is analytic and satisfies
\[
\mathcal L\varphi=\chi.
\]
Consequently,
\[
\widehat Z=\xi e^\varphi,
\qquad
\widehat W=\eta_1e^{-\varphi}
\]
satisfy
\[
X\widehat Z=\widehat Z,
\qquad
X\widehat W=-\widehat W.
\]
Since $\widehat Z=\zeta+O(2)$ and
$\widehat W=\eta+O(2)$, rescaling by
\[
Z=\widehat Z,
\qquad
W=h^{-1}\widehat W
\]
gives a near-identity time-preserving analytic linearization in the
original variables. Hence, the origin is a weakly persistent
isochronous center.

\medskip

When condition $\mathrm{I}_{21}$ holds,
system~\eqref{sys_general} can be rewritten as
\[
\begin{cases}
\dot z
=
z+\varepsilon
\left(
a_{30}z^3+a_{12}zw^2+a_{03}w^3
\right),\\[1ex]
\dot w
=
-w-\varepsilon
\left(
b_{30}w^3+b_{12}z^2w+b_{03}z^3
\right).
\end{cases}
\]
Since $a_{30}b_{30}\neq0$, the relations in
$\mathrm{I}_{21}$ imply $a_{03}b_{03}\neq0$. Set \(h
=
-\frac{4a_{30}}{7b_{03}}.\)
Then
\[
h^2=\frac{b_{30}}{a_{30}},
\qquad
h^4b_{03}=a_{03}.
\]
Let \(\zeta=z, \enspace
\eta=hw, \enspace d=hb_{03}\). 
The cubic coefficients then reduce to
\[
a_{30}=-\frac74d,
\qquad
a_{12}=\frac34d,
\qquad
a_{03}=d.
\]
Set
\[
x=\zeta+\eta,
\qquad
y=\zeta-\eta,
\qquad
k=\frac34\varepsilon d.
\]
The system becomes
\begin{equation}\label{eq:I21_xy}
\dot y=x(1-ky^2),
\qquad
\dot x
=
y\left(
1-\frac23ky^2-3kx^2
\right).
\end{equation}
Take
\[
\mu=(1-ky^2)^{-3/2}
\]
and define
\begin{equation*}
u
=
\mu y
\left(
1-\frac23ky^2
\right),
\qquad
v=\mu x.
\end{equation*}
A direct calculation using \eqref{eq:I21_xy} gives
\eqref{eq:uv_linear}. Hence, applying
\eqref{eq:uv_to_eigen}, we obtain
\[
\dot{\widehat Z}=\widehat Z,
\qquad
\dot{\widehat W}=-\widehat W.
\]
Since $\widehat Z=\zeta+O(2)$ and
$\widehat W=\eta+O(2)$, the transformation
\[
Z=\widehat Z,
\qquad
W=h^{-1}\widehat W
\]
is near identity in the original variables and does not change the time
variable. Therefore, the origin is a weakly persistent isochronous
center.

The above arguments prove the sufficiency of
$\mathrm{I}_{01}$--$\mathrm{I}_{21}$. Together with the necessity
proved above, this completes the proof.
\end{proof}

\begin{remark}
Similarly, the completeness of the necessary parameter decomposition is also established by exact computer-algebraic certificates, see Supplementary Material.  By an exhaustive system of saturated algebraic charts, the obstruction ideal through order seven is decomposed, with every complementary zero locus treated separately, yielding $V(L_7)=V(L_{\infty})$. Thus no additional parameter components can arise from higher-order singular point quantities. The sufficiency of each resulting family is proved analytically below.
\end{remark}

\section{Weakly persistent centers in nilpotent systems}\label{sec_nilpotent}

Having established the weakly persistent center conditions for the systems considered in the preceding sections, we now turn to the nilpotent case.  A complete characterization of the weakly persistent center conditions for system \eqref{sys_nilp} is given in the following theorem.

\begin{thm}\label{thm_6}
The origin of system \eqref{sys_nilp} is a weakly persistent center if and only if one of the following conditions holds:
\begin{align*}
\text{C$_{1}$} & \quad 
b_{02}=b_{11}-3a_{20}=b_{21}+3a_{30}=a_{02}=a_{12}=a_{21}=b_{03}=b_{12}=a_{11}=a_{03}=0; \\
\text{C$_{2}$} & \quad 
a_{20}=a_{30}=b_{02}-2a_{11}=b_{21}=a_{02}=b_{03}=b_{12}-2a_{21}=a_{12}=a_{03}=0; \\
\text{C$_{3}$} & \quad 
a_{11}=a_{30}=b_{02}=b_{21}=a_{12}=b_{03}=0; \\
\text{C$_{4}$} & \quad 
b_{11}=a_{20}=b_{21}=a_{02}=a_{30}=b_{03}=a_{12}=0; \\
\text{C$_{5}$} & \quad 
b_{11}+2a_{20}=b_{02}+\frac{a_{11}}{2}
=b_{21}+3a_{30}=b_{03}+\frac{a_{12}}{3}=b_{12}+a_{21}=0. 
\end{align*}
\end{thm}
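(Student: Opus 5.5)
The proof has two parts. Necessity comes from computing quasi-Lyapunov constants and decomposing the resulting ideal. Sufficiency is shown family by family with an analytic symmetry or first-integral argument.

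\emph{Necessity.} For system \eqref{sys_nilp} I would run the recursive construction of $M(x,y)=x^4+y^2+o(r^4)$ from the preliminaries and compute the quasi-Lyapunov constants $V_1,V_2,\dots$ as polynomials in $\varepsilon$ (taking $s$ fixed, say $s=0$). Weak persistence forces every coefficient of $\varepsilon^k$ in each $V_m$ to vanish. The lowest ones are essentially linear in the coefficients: for instance, the $\varepsilon^1$ coefficient of $V_1$ should be a multiple of $b_{11}+2a_{20}$ or of a similar combination. Higher powers of $\varepsilon$ give the products. I would split according to whether the first linear obstructions vanish, mirroring the case analysis used for Theorem~\ref{thm:cubic_center}, and within each branch use the further constants up to some order $m_0$, perhaps $m_0\le 8$. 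The conditions C$_1$--C$_5$ should appear as the irreducible components of $V(\{V_{m,k}\}_{m\le m_0})$. Completeness, i.e.\ that higher constants add nothing, then follows once each component is shown to be sufficient.

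\emph{Sufficiency.}
\begin{itemize}
\item \textbf{C$_5$.} The relations $b_{11}=-2a_{20}$, $b_{02}=-a_{11}/2$, $b_{21}=-3a_{30}$, $b_{12}=-a_{21}$, $b_{03}=-a_{12}/3$ are exactly what makes the perturbation Hamiltonian. With
\[
H=\tfrac{y^2}{2}+\tfrac{x^4}{4}+\varepsilon\bigl(a_{20}x^2y+\tfrac{a_{11}}{2}xy^2+\tfrac{a_{02}}{3}y^3+a_{30}x^3y+\tfrac{a_{21}}{2}x^2y^2+\tfrac{a_{12}}{3}xy^3+\tfrac{a_{03}}{4}y^4\bigr),
\]
we have $\dot x=H_y$ and $\dot y=-H_x$. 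Since $H$ is positive definite in the weighted sense near the origin, the origin is a center for every real $\varepsilon$.
\item \textbf{C$_3$.} Here $\dot x=y+\varepsilon(a_{20}x^2+a_{02}y^2+a_{21}x^2y+a_{03}y^3)$ and $\dot y=-x^3+\varepsilon(b_{11}xy+b_{12}xy^2)$. The system is invariant under $(x,y,t)\mapsto(-x,y,-t)$, since $\dot x$ is even in $x$ and $\dot y$ is odd in $x$. Monodromy then gives a center by the reversibility principle.
\item \textbf{C$_4$.} This is handled the same way, with the reversibility $(x,y,t)\mapsto(x,-y,-t)$: under C$_4$, $\dot x$ is odd in $y$ (terms $y$, $xy$, $x^2y$, $y^3$) and $\dot y$ is even in $y$ (terms $-x^3$, $y^2$, $xy^2$).
\item \textbf{C$_1$ and C$_2$.} I would look for an integrating factor of the form $(1+\varepsilon\,\cdot)^{\lambda}$, or a change of variables reducing the system to a generalized Liénard form $\dot x=y$, $\dot y=\sum p_i(x)y^i$, and then apply the Le Van Linh--Sadovskii criterion recalled earlier. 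For C$_2$ the system reads $\dot x=y+\varepsilon(a_{11}xy+a_{21}x^2y)=y(1+\varepsilon a_{11}x+\varepsilon a_{21}x^2)$ and $\dot y=-x^3+\varepsilon(b_{11}xy+2a_{11}y^2+2a_{21}xy^2)$. Dividing time by $d(x)=1+\varepsilon a_{11}x+\varepsilon a_{21}x^2$ gives a Liénard-type equation whose $y$-linear part is odd in $x$, which yields a center. C$_1$ is handled similarly.
\end{itemize}

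\emph{Main obstacle.} I expect the main difficulty to be the exhaustive decomposition in the necessity part, in particular guaranteeing that no extra components survive. I would handle this by saturating with respect to the generic nonvanishing factors and treating each degenerate locus separately, as in the Remark after Theorem~\ref{thm:cubic_center}.
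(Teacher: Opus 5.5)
Your outline is the paper's: quasi-Lyapunov constants with a case split for necessity, the Hamiltonian $H_5$ for C$_5$, reversibility for C$_3$ and C$_4$, and reduction to Liénard form for C$_1$ and C$_2$. The step that fails is the one you actually give for C$_2$. After dividing time by $d(x)$ you get $x'=y$, $y'=p_0+p_1y+p_2y^2$ with
\[
p_0=-\frac{x^3}{d(x)},\qquad p_1=\frac{\varepsilon b_{11}x}{d(x)},\qquad p_2=\frac{2\varepsilon(a_{11}+a_{21}x)}{d(x)}.
\]
Since $d$ contains the odd term $\varepsilon a_{11}x$, neither $p_0$ nor $p_1$ is odd when $a_{11}\neq0$, and a $y^2$-term remains. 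In any case, oddness of $p_1$ alone is not a center criterion. For example, $\dot x=y$, $\dot y=-x^3+xy+x^4$ is monodromic with $p_1=x$ odd, but by the Cherkas composition criterion it is not a center. Reversibility under $x\mapsto-x$ needs all of $p_0,p_1,p_2$ odd, which holds only when $a_{11}=0$.

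The paper closes this step by checking the Le Van Linh--Sadovskii invariants $F_1,F_2$. Alternatively, your symmetry idea can be rescued by an explicit change of variables:
\begin{itemize}
\item Put $K(x)=\int_0^x ds/d(s)$ and $Y=y\,e^{-\varepsilon a_{11}K}/d$. This removes the $y^2$-term, because $2\varepsilon(a_{11}+a_{21}x)=d'+\varepsilon a_{11}$.
\item Put $U=x\,d^{-1/2}e^{-\varepsilon a_{11}K/2}$. A short computation gives $\dot U=(x/U)\,Y$ and $\dot Y=(x/U)(-U^3+\varepsilon b_{11}UY)$.
\item Rescale time by the unit $x/U$. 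The system becomes $U'=Y$, $Y'=-U^3+\varepsilon b_{11}UY$, which is reversible under $(U,Y,t)\mapsto(-U,Y,-t)$.
\end{itemize}
Likewise, ``C$_1$ similarly'' needs its own step. Here $\dot x$ has no factor $y$, so you must shift $v=y+\varepsilon(a_{20}x^2+a_{30}x^3)$; after that $\dot v$ is odd in $x$, which is the paper's argument.

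Three further corrections:
\begin{itemize}
\item In C$_5$, $H$ is not weighted positive definite for every $\varepsilon$. Its weighted-degree-four part is $x^4/4+\varepsilon a_{20}x^2y+y^2/2$, which is indefinite once $2\varepsilon^2a_{20}^2>1$. Then $H$ changes sign near the origin, and the origin is not a center.
\item The same kind of restriction appears elsewhere; for instance, C$_1$ is monodromic only for $\varepsilon^2a_{20}^2<8$. The theorem can only hold under the monodromy assumption from the preliminaries, which the paper invokes implicitly. You should state that assumption instead of claiming the result for every real $\varepsilon$.
\item In the necessity part, the split $2a_{20}+b_{11}\neq0$ versus $2a_{20}+b_{11}=0$ does not come from $V_1$. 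The $\varepsilon$-linear coefficient of $V_1$ is a multiple of $3a_{30}+b_{21}$. The split comes from the auxiliary odd coefficient $w_5\propto s(2a_{20}+b_{11})$. Fixing $s=0$ removes $w_5$, but the resulting constants (e.g.\ $v_3$) have $2a_{20}+b_{11}$ in a denominator. So the locus $2a_{20}+b_{11}=0$, which contains C$_4$ and C$_5$, must be recomputed separately, as the paper does with $s$ left free.
\end{itemize}
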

\begin{proof}
We first derive the necessary center conditions. The auxiliary coefficient $w_5$ arising in the recursive computation of the quasi-Lyapunov constants is given by
\begin{equation}\label{w5}
    \begin{aligned}
    w_5 = & -\frac{(2a_{20}+b_{11}) s \varepsilon}{8(1+s)^2}  \left(8+16s+8s^2-4a_{20}^2\varepsilon^2+4a_{20}b_{11}\varepsilon^2 -b_{11}^2\varepsilon^2-8a_{20}^2 s\varepsilon^2\right.\\
    & \left.+8a_{20}b_{11}s\varepsilon^2-2b_{11}^2 s\varepsilon^2+8a_{20}b_{11}s^2\varepsilon^2 \right).
\end{aligned}
\end{equation}
Since the factor in parentheses has the nonzero constant term
$8(1+s)^2$, the condition $w_5=0$ leads to two cases:
$$
\text{(i) } 2a_{20}+b_{11}\neq 0 \ (s=0), \qquad \text{(ii) } 2a_{20}+b_{11}=0.
$$
We consider these two cases separately.

Under $2a_{20}+b_{11}\neq 0$, the first three quasi-Lyapunov constants are
\begin{align*}
   v_{1}= &-\frac{1}{30} \varepsilon \left(
-15 a_{30} - 5 b_{21} + 9 a_{11} a_{20} \varepsilon + 8 a_{20} b_{02} \varepsilon + \cdots - a_{20} b_{02} b_{11}^2 \varepsilon^3
\right),\\
 v_{2}=&-\frac{
\varepsilon \left(
2835 a_{12} + 8505 b_{03} - 2835 a_{02} a_{11} \varepsilon + \cdots - 45 a_{02} a_{20}^2 b_{02} b_{11}^6 \varepsilon^9
\right)
}{
3150 \left(1 + a_{20} b_{11} \varepsilon^2\right)
\left(-9 + 4 a_{20}^2 \varepsilon^2 - 5 a_{20} b_{11} \varepsilon^2 + b_{11}^2 \varepsilon^2\right)
},\\
v_{3}=&-\frac{\varepsilon^2
}{
661500  (2 a_{20}+ b_{11}) (1 + a_{20} b_{11}\varepsilon^2)^2 (-9 + 4 a_{20}^2 \varepsilon^2 - 5 a_{20} b_{11}\varepsilon^2 + b_{11} \varepsilon^2) 
}\\
&\times \frac{1}{(-32 + 12 a_{20}^2 \varepsilon^2 - 20 a_{20} b_{11} \varepsilon^2 + 3 b_{11}^2\varepsilon^2)}\left(
18869760  a_{12} a_{20} a_{21}
- 14696640  a_{11} a_{12} a_{30} \right.
\\
& \left.+ 2551500   a_{03} a_{20} a_{30}
+ \cdots
- 15750 a_{02}^2 a_{20}^3 b_{02} b_{11}^{11} \varepsilon^{14}
\right).
\end{align*}
For the origin to be a weakly persistent center, these quasi-Lyapunov constants must vanish identically with respect to $\varepsilon$. With the aid of Mathematica, we equate to zero all the coefficients of $v_{j}$ in powers of $\varepsilon$, for $j=1,2,3$, and solve the resulting polynomial system. After excluding inadmissible solutions and eliminating duplicate and properly contained parameter families, we obtain the conditions $C_{1}$, $C_{2}$, and $C_{3}$ stated in Theorem \ref{thm_6}.

Under $2a_{20}+b_{11}=0$ , the first two quasi-Lyapunov constants are
\begin{align*}
 v'_{1}= &\frac{1}{6} \varepsilon \left( -3 a_{30} - b_{21} + a_{11} a_{20} \varepsilon + 2 a_{20} b_{02} \varepsilon \right) \left( -1 + 2 a_{20}^2 \varepsilon^2 \right),\\
 v'_{2}=&-\frac{\varepsilon \left(
-270 a_{12} - 810 b_{03} - 180 a_{12} s + \cdots + 6960 a_{20}^5 b_{02}^3 s^3 \varepsilon^7
\right)}{900 (1+s)^2 (-3+4s) (-1+2 a_{20}^2 \varepsilon^2)}
.
\end{align*}
Similarly, we equate to zero all the coefficients of $v'_{j}$ with respect to $\varepsilon$, for $j=1,2$, and solve the resulting polynomial system using Mathematica. After removing inadmissible, duplicate, and properly contained parameter families, we obtain the remaining conditions $C_{4}$ and $C_{5}$.

Combining the results of the two cases, we obtain exactly the five parameter families $C_{1}$--$C_{5}$ listed in Theorem \ref{thm_6}. 

 This completes the proof of necessity. We next establish the sufficiency by considering the five parameter families separately.

When condition $C_{1}$ holds, system  \eqref{sys_nilp} can be changed to
\begin{equation*}
     \begin{cases}
\dot x=y+\varepsilon(a_{20} x^2 + a_{30} x^3 ),\\
\dot y=-x^3+\varepsilon(3 a_{20} x y - 3 a_{30} x^2 y).
\end{cases}
\end{equation*}
Introduce the transformation
$$
u = x, \qquad v = y + \varepsilon (a_{20} x^2 + a_{30} x^3),
$$
then we obtain
\[
\begin{cases}
\dot u = v, \\
\dot v = 5 a_{20} u v \, \varepsilon- u^3 (1 + 3 a_{20}^2 \varepsilon^2) + 3 a_{30}^2 u^5 \varepsilon^2 ,
\end{cases}
\]
which is symmetric with respect to the $v$-axis, hence the origin is a center.

When condition $C_{2}$ holds,
system  \eqref{sys_nilp} becomes
\begin{equation}\label{sys4-2}
     \begin{cases}
\dot x=y+\varepsilon(a_{11} x y + a_{21} x^{2} y ),\\
\dot y=-x^3+\varepsilon(b_{11} x y + 2 a_{11} y^{2} + 2 a_{21} x y^{2}).
\end{cases}
\end{equation}
and by performing the time transformation \(d\tau = \left(1 + (a_{11} x + a_{21} x^2 ) \varepsilon \right) dt\) , system \eqref{sys4-2} can be changed into a  Liénard system in the form of
    \begin{equation*} 
        \frac{dx}{d\tau}=y,\enspace \frac{dy}{d\tau}=p_0(x)+p_1(x) y+p_2(x) y^2+p_3(x)y^3,
    \end{equation*}
where 
\begin{align*}
      p_0(x) =&-\frac{x^3}{1 + a_{11} x \varepsilon + a_{21} x^{2} \varepsilon},\quad p_1(x) =\frac{b_{11} x \varepsilon}{1 + a_{11} x \varepsilon + a_{21} x^{2} \varepsilon}, \\ p_2(x) = &\frac{2 (a_{11} + a_{21} x) \varepsilon}{1 + a_{11} x \varepsilon + a_{21} x^{2} \varepsilon},\quad
    p_3(x) =0.
\end{align*}
Based on the center results for Liénard equations in Section \ref{sec_pre},  we can get \(F_1(x) - F_1(y)=0\) and \(F_2(x) - F_2(y)=0\). Therefore, the origin is a  center.

When condition $C_{3}$ holds, system  \eqref{sys_nilp} can be rewritten as
\begin{equation*}
     \begin{cases}
\dot x=y+\varepsilon(a_{20} x^{2}  + a_{02} y^{2}+ a_{21} x^{2} y + a_{03} y^{3} ),\\
\dot y=-x^3+\varepsilon(b_{11} x y + b_{12} x y^{2}).
\end{cases}
\end{equation*}
which is symmetric with respect to the $y$-axis, hence the origin is a center.

When condition $C_{4}$ holds,
system  \eqref{sys_nilp} becomes
\begin{equation*}
     \begin{cases}
\dot x=y+\varepsilon(a_{11} x y + a_{21} x^{2} y + a_{03} y^{3} ),\\
\dot y=-x^3+\varepsilon(b_{02} y^{2} + b_{12} x y^{2}),
\end{cases}
\end{equation*}
which is symmetric with respect to the $x-$axis, hence the origin is a center.

When condition $C_{5}$ holds, system  \eqref{sys_nilp} can be changed to
\begin{equation*}
     \begin{cases}
\dot x=y+\varepsilon(a_{20} x^{2} + a_{11} x y + a_{02} y^{2} + a_{30} x^{3} + a_{21} x^{2} y + a_{12} x y^{2} + a_{03} y^{3}),\\
\dot y=-x^3+\varepsilon(-2 a_{20} x y - \frac{a_{11} }{2}y^{2}- 3 a_{30} x^{2} y  - a_{21} x y^{2} - \frac{a_{12} }{3}y^{3}),
\end{cases}
\end{equation*}
which admits a first integral
$$H_{5} = \frac{x^4}{4} + \frac{y^2}{2} + \varepsilon \left( a_{20} x^2 y + a_{30} x^3 y + \frac{1}{2} a_{11} x y^2 + \frac{1}{2} a_{21} x^2 y^2 + \frac{1}{3} a_{02} y^3 + \frac{1}{3} a_{12} x y^3 + \frac{1}{4} a_{03} y^4 \right).$$
We have thus completed the proof of Theorem \ref{thm_6}.
\end{proof}

\section{Conclusion}\label{sec_conclusion}

In this paper, we have investigated the weakly persistent center problem for  planar cubic differential systems, including
systems with a linear  complex center and systems with a nilpotent center. By
computing the corresponding singular point quantities or
quasi-Lyapunov constants, we have derived the necessary algebraic
relations among the perturbation parameters, and the sufficiency of
the resulting conditions has been established by means of symmetry,
explicit first integrals, inverse integrating factors, suitable
analytic transformations, regular analytic initial-value problems, and
convergent power-series constructions. For the general complex cubic
system, we have further studied the weakly persistent isochronous
center problem. By combining the weakly persistent center conditions
with the vanishing of the linearization quantities and constructing
time-preserving analytic linearizations, we have obtained the
corresponding necessary and sufficient conditions for the origin to be
a weakly persistent isochronous center.

Although the results obtained here provide complete characterizations
for persistent center and isochrnous center of cubic systems under consideration,
 the center problem for general cubic differential systems is still far from being completely
resolved. In particular, as the number of parameters increases, the
algebraic conditions arising from the vanishing of the singular point
quantities become increasingly complicated,
and the verification of their sufficiency also requires more delicate analytic constructions.
It would therefore be of interest to develop more effective algebraic
and analytic methods for the general cubic center problem.
\section*{Acknowledgement}
This research was partially supported by National Natural Science
Foundation of China, No. 12471166 (F. Li), the Natural Science Foundation of Shandong Province, China, No. ZR2024MA037 (F. Li) and Shandong Province SME Technology Enhancement Program Project, No. 2024TSGC0814ZKT (H. Li).


\bibliography{sin}
\end{document}


\maketitle

\section{System, definitions, and scope}
We consider
\begin{equation}\label{sys}
\begin{aligned}
\dot z={}&z+\eps\bigl(a_{20}z^2+a_{11}zw+a_{02}w^2+a_{30}z^3+a_{21}z^2w+a_{12}zw^2+a_{03}w^3\bigr),\\
\dot w={}&-w-\eps\bigl(b_{20}w^2+b_{11}zw+b_{02}z^2+b_{30}w^3+b_{21}zw^2+b_{12}z^2w+b_{03}z^3\bigr).
\end{aligned}
\end{equation}
All $a_{ij},b_{ij}$ are independent elements of $\C$; no conjugacy relation is imposed.  A weakly persistent center has, for every $\eps\in\C$, a local analytic first integral $H_\eps=zw+\ord(3)$.  A weakly persistent isochronous center is analytically conjugate, with time unchanged, to $\dot U=U$, $\dot V=-V$ for every $\eps$.  This paper proves the necessity and completeness of the parameter decompositions.  The analytic all-order constructions proving sufficiency are recorded in the companion classification paper.

The computations use exact rational arithmetic.  Whenever a chart declares $\delta\ne0$, its ideal is replaced by $J:\delta^\infty$.  Every factor put into $\delta$ is also treated on a separate zero chart.  Thus no conclusion below is obtained by dividing by a parameter before its zero locus has been analyzed.

\section{Weakly persistent centers: necessity}
\begin{theorem}[Finite necessity and exhaustive center decomposition]\label{thm:center}
If the origin of \eqref{sys} is a weakly persistent center, then
\begin{equation}\label{common:center}
a_{21}=b_{21},\qquad a_{11}a_{20}=b_{11}b_{20},
\end{equation}
and at least one of the 39 original-coordinate groups F01--F28 listed in Appendix~\ref{app:center-list} holds.  Conversely, the exact obstruction variety through order seven has no other point:
\[
V(I_7)=\bigcup_{\mathrm{F}\nu}V(K_{\mathrm{F}\nu}),\qquad
\sqrt{I_7}=\bigcap_{\mathrm{F}\nu}\sqrt{K_{\mathrm{F}\nu}}
\quad\text{over }\C,
\]
with saturation by each group's stated nonzero factors.  Hence the displayed alternatives are a complete necessary list.
\end{theorem}

\subsection{Homological recursion and saddle quantities}
Write $X_\eps=X_0+\eps X_2+\eps X_3$, where
$X_0=z\partial_z-w\partial_w$ and $X_d$ has homogeneous coefficients of
degree $d$.  Starting with $F_1=0$ and $F_2=zw$, define at total degree $n$
\begin{equation}\label{rec:center}
T_n=\eps(X_2F_{n-1}+X_3F_{n-2}),\qquad
[z^iw^j]F_n=-\frac{[z^iw^j]T_n}{i-j}\quad(i\ne j).
\end{equation}
The resonant coefficient $[z^mw^m]F_{2m}$ is set to zero.  The term that
cannot be removed is
\begin{equation}\label{saddle}
g_k(\eps)=[z^{k+1}w^{k+1}]T_{2k+2}
=\sum_{j=k}^{2k}G_{kj}\eps^j,
\qquad I_N=\id{G_{kj}:1\le k\le N}.
\end{equation}
A center for every $\eps$ requires every $G_{kj}$ to vanish.  At the first
level,
\[
G_{11}=a_{21}-b_{21},\qquad
G_{12}=b_{11}b_{20}-a_{11}a_{20},
\]
which gives \eqref{common:center}.  The next lowest coefficient is
\[
G_{22}=b_{30}b_{12}-a_{30}a_{12}.
\]
The recursion gives 27 coefficient polynomials through $g_6$:
$2+3+4+5+6+7=27$.  They were computed over $\mathbb Q$ in the full
fourteen-parameter ring.  The complete expansions are retained in the
electronic calculation bundle; below we display the reduced combinations
that determine each radical split.

On a chart with current ideal $J$ and a product $\delta$ declared nonzero,
the relevant ideal is $J:\delta^\infty$.  An equality
\[
\sqrt{J:\delta^\infty}
=\bigcap_q\sqrt{Q_q:\delta^\infty}
\]
is verified in both directions by exact normal-form calculations.  Whenever
a factor enters $\delta$, its zero set is treated as a separate child chart.
Thus the use of normalized variables below does not remove a degenerate
boundary.

\subsection{Successive center branches and their complete coverage}\label{sec:center-branches-prose}
We now follow the order in which the saddle quantities are actually solved.
This is more informative than listing the final ideals without their parent
branches.  On a chart with current ideal $J$ and declared nonzero product
$\delta$, write
\[
 \overline G_{kj}=\operatorname{NF}(G_{kj};J),\qquad
 \mathcal R_7(J,\delta)=
 \sqrt{\id{\overline G_{kj}:1\le k\le7}:\delta^\infty}.
\]
Every denominator introduced below is a factor of $\delta$.  Its zero set is
handled before that denominator is used.

\subsubsection{The nondegenerate quadratic branch}
Assume first that $a_{20}b_{20}\ne0$.
Impose $G_{11}=G_{12}=0$ and use the normalized variables
\begin{equation}\label{eq:prose-normalization}
\begin{gathered}
t={a_{11}}/{b_{20}}={b_{11}}/{a_{20}},\quad
A={a_{02}a_{20}}/{b_{20}^{2}},\quad
B={b_{02}b_{20}}/{a_{20}^{2}},\\
C={a_{30}}/{a_{20}^{2}},\quad D={b_{30}}/{b_{20}^{2}},\quad
U={a_{12}}/{b_{20}^{2}},\quad V={b_{12}}/{a_{20}^{2}},\\
E={a_{03}a_{20}}/{b_{20}^{3}},\quad
F={b_{03}b_{20}}/{a_{20}^{3}},\quad
S={a_{21}}/{(a_{20}b_{20})}.
\end{gathered}
\end{equation}
The first three reduced quantities needed on this chart are
\begin{equation}\label{eq:nondeg-prose-entry}
 G_{22}=DV-CU,\qquad
 G_{24}=\frac13t(A-B)(t-2)(2t+1),\qquad
 \left.G_{36}\right|_{t=-1/2}
 =-\frac{25}{256}(A-B)(4AB-1).
\end{equation}
After saturation by $a_{20}b_{20}$, the complement of
\[
 t=0,\qquad t=2,\qquad A=B,
 \qquad\text{or}\qquad t=-\tfrac12,\ 4AB=1
\]
has unit obstruction ideal.  These four alternatives therefore cover the
nondegenerate chart.

First take $t=2$.  The remaining reduced quantities split into
$U=3D,V=3C$, which restores to F01, and the equal-coefficient boundary
$A=B,C=D,E=F,U=V$, which is contained in F02.  Hence no additional branch is
created at $t=2$.

Next take $A=B$.  The five quantities
$\overline G_{k,2k-1}$, $2\le k\le6$, are linear in
$(C-D,E-F,U-V)$.  Their coefficient matrix $M(A,t)$ satisfies
\[
 M(A,t)(-A-1,4A,3A+2t-1)^T=0.
\]
Away from $t=0,2$, the ideal of its minors has only the rank-drop points
\[
(A,t)=(1/2,-1/2),\ (-1/2,-1/2),\ (-2/3,-1/3),\
(2/7,1/7),\ (1/4,1/4).
\]
At generic rank two the solution is either the reversible family F02 or the
Lotka family F04.  The points $(1/2,-1/2)$, $(-2/3,-1/3)$ and $(2/7,1/7)$
reduce to F02.  The two remaining rank drops give F10 and F11.  Since the
entire minor ideal, rather than a selected minor, is decomposed, this step
does not miss an exceptional curve or an isolated rank drop.

On the exceptional part $t=-1/2$, $A\ne B$, equation
\eqref{eq:nondeg-prose-entry} forces $4AB=1$.  Reduction of the next
quantities gives
\[
 C=D=E=F=U=V=S=0,
\]
which is F09.  The saturated complement is empty.

It remains to treat $t=0$.  Here $G_{22}=0$ is $CU=DV$.  If
$(C,D)\ne(0,0)$, introduce $L$ by $U=LD$, $V=LC$.  This is done only on
that open chart.  Three useful reductions are
\begin{align}
G_{33}&=-\frac18(L-3)(3L+1)(EC^2-FD^2),\label{eq:t0-prose-1}\\
G_{23}&=\frac23\{(L+1)(AC-BD)-AF+BE\},\label{eq:t0-prose-2}\\
G_{35}+\frac{5AB+6A+123}{12}G_{23}
&=-\frac13(A-B)\{S-(L+1)AC-BE\}.\label{eq:t0-prose-3}
\end{align}
The factors in \eqref{eq:t0-prose-1} separate $L=3$, $L=-1/3$, and the
relation $EC^2=FD^2$.  Equation \eqref{eq:t0-prose-2} then separates
$L=-1$ from its complement, while \eqref{eq:t0-prose-3} separates $A=B$
from $A\ne B$.  Decomposing the remaining radicals according to $AB=0$,
$CD=0$, and $C=D$ gives F02, F03, F04, F07, F07b, and F12--F16.  More
precisely, $L=3$ contains the Hamiltonian boundary and the F12 continuation;
$L=-1/3$ contains F07/F07b and F12; $L=-1$ contains F02--F04 and F16; the
general equal-quadratic locus gives F02, F07/F07b, F08, F12 and F13; the
remaining $A\ne B$ loci give F12--F15.

If $C=D=0$, the parameter $L$ is not introduced, because $U$ and $V$ are
then independent.  Direct decomposition of $\mathcal R_7$ gives six
components.  After restoration they are F02, F03, F07, F07b, F16, F17 and
F18; one component and its exchanged form are counted separately.  This
direct treatment is essential: imposing $U=LD,V=LC$ at $C=D=0$ would
incorrectly remove the independent-$U,V$ components F17 and F18.

Thus the nondegenerate quadratic branch has been exhausted.  Every point on
it belongs to F01--F04, F07--F18, or to an overlap of these families.

\subsubsection{The one-sided quadratic branches}
Begin with $a_{20}=0$, $b_{20}\ne0$.
The equation $G_{12}=0$ gives $b_{11}=0$.  Scale $b_{20}$ to one and put
$t=a_{11}/b_{20}$.  The first decisive reductions are
\begin{equation}\label{eq:onesided-prose}
G_{24}=-\frac13Bt(t-2)(2t+1),\qquad
G_{36}=-\frac1{72}AB^2t(t-2)(73t+14).
\end{equation}
On $B=0$ they are supplemented by
\begin{equation}\label{eq:onesided-prose-next}
G_{23}=\frac13AF(t-2)+t\{(2t-1)C-V\},\qquad
G_{35}=\frac18Ft(t-2)(3t-1)(3t+1).
\end{equation}
If $t=2$, the reduced ideal gives the Hamiltonian boundary F01 and the
triangular boundary F06.  If $B\ne0$ and $t\ne0,2$, equations
\eqref{eq:onesided-prose} and the next nonzero reductions force $t=-1/2$;
the resulting component is F19.  If $B=0$ and $t\ne0,2$, equation
\eqref{eq:onesided-prose-next} yields $t=-1/3$ or $t=1/3$, producing F20
and F21, together with the Lotka and triangular boundaries F04 and F06.

For $t=0$, the radical is decomposed without division into $A=0,B=0$, and
$AB\ne0$.  The locus $B=0$ gives F03, F04, F06--F08 and F22.  The locus
$A=0,B\ne0$ gives F03, F07 and F23.  The locus $AB\ne0$ gives F03 and
F24--F26.  In the F26 calculation the original scale is eliminated with
$9D+7U=-12v$; no division by $7r-6$ is made, so the value $r=6/7$ remains
in the family.  This completes the chart $a_{20}=0,b_{20}\ne0$.

The chart $a_{20}\ne0,b_{20}=0$ is its image under the invertible exchange
of the two equations.  It gives F06b, F07b and F19b--F26b, including all
their zero boundaries.  No new calculation or unexamined denominator is
hidden in the suffix b.

\subsubsection{The doubly degenerate quadratic branch}
Assume now that $a_{20}=b_{20}=0$.
Put $u=a_{11}$, $v=b_{11}$, $A=a_{02}$ and $B=b_{02}$.  The first useful
quantities are
\begin{equation}\label{eq:double-prose}
G_{24}=\frac23(Av^3-Bu^3),\qquad
\left.G_{35}\right|_{u=0,v\ne0}=-\frac98E,
\qquad
\left.G_{23}\right|_{u=0,v\ne0}=-\frac13(BE+6D).
\end{equation}
We first separate $uv\ne0$, exactly one of $u,v$ equal to zero, and
$u=v=0$.  In the first two cases \eqref{eq:double-prose} and its exchanged
form give F02, F05, F06 and F06b.  On $u=v=0$ we next separate $AB\ne0$,
exactly one of $A,B$ equal to zero, and $A=B=0$.  The first two alternatives
give F01--F03, F06/F06b, and F27/F27b.

On the last alternative all quadratic terms vanish.  If $CD\ne0$, scale the
self-cubic coefficients and put $U=V=L$.  The radical is
\[
 \id{L-3}\ \cap\ \id{EC^2-FD^2}\ \cap\
 \id{3L+1,S,9EF-4CD}.
\]
These three components restore respectively to F01, F02, and F28.  If one
of $C,D$ is zero, the corresponding radical is
\[
 \id{U-3}\ \cap\ \id{F}\ \cap\ \id{3U+1,E,S}
\]
or its exchanged form, giving F01, F02, F06/F06b, and F21/F21b.  If
$C=D=0$, the remaining equation is $U^2F-EV^2=0$, already covered by the
same families and their boundaries.

A finite-order residual candidate survives the first six saddle quantities
on one doubly degenerate chart.  In its quotient ring the next quantity is
\[
 g_7=-\frac{200}{15309}D^6\eps^8.
\]
The open locus $D\ne0$ is therefore empty, and $D=0$ has already been sent
to the preceding boundary chart.  This is the only place where the seventh
saddle quantity is needed to close the center decomposition.

\subsubsection{Why the center branches cover the full parameter space}
The four constructible root charts
\[
\{a_{20}b_{20}\ne0\},\quad
\{a_{20}=0,b_{20}\ne0\},\quad
\{a_{20}\ne0,b_{20}=0\},\quad
\{a_{20}=b_{20}=0\}
\]
are disjoint and exhaustive.  Within each chart every factor used for a
division is placed in the saturation product $\delta$, while its zero set is
one of the next branches described above.  At every terminal node the exact
certificate verifies
\[
 \sqrt{J:\delta^\infty}
 =\bigcap_q\sqrt{Q_q:\delta^\infty};
\]
an empty terminal node is certified by $1\in J:\delta^\infty$.  Restoring
the diagonal scaling is invertible and sends $g_k$ to a nonzero multiple of
$g_k$.  Consequently all points of $V(I_7)$ occur in F01--F28 or an
explicit exchanged form.  Together with the all-order center constructions
in the companion paper this gives
\[
 V(I_7)=V(I_\infty)=\bigcup_{\mathrm F\nu}V(K_{\mathrm F\nu}).
\]

\section{Weakly persistent isochronous centers: necessity}
\begin{theorem}[Finite necessity and exhaustive isochronous decomposition]\label{thm:iso}
If the origin of \eqref{sys} is a weakly persistent isochronous center, then
\begin{equation}\label{common:iso}a_{21}=b_{21}=0\end{equation}
and at least one of the 21 original-coordinate groups I01--I13 listed in Appendix~\ref{app:iso-list} holds.  Exact decomposition through the seventh resonant pair gives
\[
V(\mathcal L_7)=\bigcup_{\mathrm I\mu}V(K_{\mathrm I\mu}),\qquad
\sqrt{\mathcal L_7}=\bigcap_{\mathrm I\mu}\sqrt{K_{\mathrm I\mu}}
\quad\text{over }\C.
\]
Thus the isochronous necessity list is also exhaustive.
\end{theorem}

\subsection{Linearization recursions and resonant obstructions}
Seek eigenfunctions
\[
U=z+\sum_{n\ge2}U_n,\qquad V=w+\sum_{n\ge2}V_n,
\qquad X_\eps U=U,\quad X_\eps V=-V.
\]
With $U_1=z$, $V_1=w$, and $U_0=V_0=0$, put
\[
R_n^+=\eps(X_2U_{n-1}+X_3U_{n-2}),\qquad
R_n^-=\eps(X_2V_{n-1}+X_3V_{n-2}).
\]
For nonresonant monomials the coefficients are determined by
\[
[z^iw^j]U_n=-\frac{[z^iw^j]R_n^+}{i-j-1},\qquad
[z^iw^j]V_n=-\frac{[z^iw^j]R_n^-}{i-j+1}.
\]
The two resonant coefficients at odd degree are
\[
\ell_k^+=[z^{k+1}w^k]R_{2k+1}^+,\qquad
\ell_k^-=[z^kw^{k+1}]R_{2k+1}^-,
\qquad
\ell_k^\pm=\sum_{j=k}^{2k}\ell_{k,j}^\pm\eps^j.
\]
The first pair is
\begin{align*}
\ell_1^+&=a_{21}\eps-
\left[a_{11}(a_{20}+b_{11})+\tfrac23a_{02}b_{02}\right]\eps^2,\\
\ell_1^-&=-b_{21}\eps+
\left[b_{11}(b_{20}+a_{11})+\tfrac23a_{02}b_{02}\right]\eps^2.
\end{align*}
It gives $a_{21}=b_{21}=0$ and the common quadratic relation.  The lowest
coefficients of the second pair are
\[
[\eps^2]\ell_2^+=-a_{12}a_{30}-a_{12}b_{12}
-\tfrac34a_{03}b_{03},\qquad
[\eps^2]\ell_2^-=b_{12}b_{30}+a_{12}b_{12}
+\tfrac34a_{03}b_{03}.
\]
There are 70 coefficient polynomials through the seventh pair.  Denote by
\[
\mathcal L_N=
\id{\ell_{k,j}^+,\ell_{k,j}^-:1\le k\le N,\ k\le j\le2k}
\]
their successive ideals.

If the first seven pairs vanish, the truncated eigenfunctions give
$X(UV)=\ord(17)$, hence the first seven saddle quantities vanish.  The
complete center decomposition therefore places every point of
$V(\mathcal L_7)$ in one of F01--F28 or an exchanged form.  It is then
legitimate to restrict the linearization quantities to each center ideal.
This reduction uses the already proved finite center identity
$V(I_7)=V(I_\infty)$; it does not assume isochronicity from finite-order
vanishing.

\subsection{Restriction of the linearization quantities to every center family}\label{sec:iso-family-prose}
Let $J_\nu$ be the defining ideal of F$\nu$, including its auxiliary
relations, and let $\delta_\nu$ be the product of the factors declared
nonzero in that family.  For $m\le7$ set
\[
 \mathfrak R_\nu^{(m)}=
 \sqrt{(J_\nu+\mathcal L_m):\delta_\nu^\infty}.
\]
Thus $\mathfrak R_\nu^{(m)}=\id1$ means that the stated open part of
F$\nu$ contains no isochronous point.  When the radical has several
components, every zero factor is retained and followed on its own boundary.
The first three resonant pairs are used initially; pairs four through seven
are evaluated only on the smaller candidate components that survive.

The common first pair gives $a_{21}=b_{21}=0$ and
\[
a_{11}(a_{20}+b_{11})+\frac23a_{02}b_{02}=0,
\qquad
b_{11}(b_{20}+a_{11})+\frac23a_{02}b_{02}=0.
\]
The lowest coefficients of the second pair give
\[
a_{12}a_{30}+a_{12}b_{12}+\frac34a_{03}b_{03}=0,
\qquad
b_{12}b_{30}+a_{12}b_{12}+\frac34a_{03}b_{03}=0.
\]
The statements below are the radicals obtained after reducing all
coefficients of the indicated pairs, rather than conclusions drawn from
these four displayed equations alone.

\paragraph{F01: the Hamiltonian center family.}
Reduction of the first three pairs produces seven components.  Five lie on
axes where one equation becomes triangular or Riccati and reduce to I01,
I01b, I02, or I02b.  The two remaining cubic candidates are separated by
the fourth pair.  On their quotient rings a nonzero reduced obstruction is,
up to exchange,
\[
 [\eps^4]\ell_4^+=\frac{105}{32}a_{03}^2b_{03}^2.
\]
Its two zero factors return to the preceding triangular/Riccati components.
Hence F01 contributes no additional open isochronous family.

\paragraph{F02: the reversible center family.}
Use $(\zeta,\eta)=(z,hw)$ to make the two coefficient sets equal.  Denote
the three quadratic coefficients in the transformed first equation by
$(A,B,C)$.  The radical of the first linearization pair is the union
\begin{align*}
&B+2C=0,\quad 3A-7C=0;\\
&3B-2C=0,\quad 3A+5C=0;\\
&C=B=0;\\
&C=0,\quad A+B=0.
\end{align*}
Reducing the second and third pairs, and then the higher pairs on the cubic
boundaries, gives I11 from the first component, I12 from the second, and
I13 from the nondegenerate pure-cubic continuation.  The last two quadratic
components and their cubic boundaries are contained in I03, I04/I04b, and
I10.  Restoring $h\ne0$ gives exactly the original-coordinate conditions
listed for I11--I13; no point is lost because the diagonal change is
invertible.

\paragraph{F03: the polynomial inverse-integrating-factor family.}
The first three pairs give exactly two radical components.  In original
coordinates they are characterized respectively by
\[
\begin{gathered}
a_{11}=b_{11}=a_{03}=b_{03}=b_{02}=0,\qquad
a_{12}+b_{30}=a_{30}+b_{12}=0,\\
a_{11}=b_{11}=a_{03}=b_{03}=a_{02}=0,\qquad
a_{12}+b_{30}=a_{30}+b_{12}=0,
\end{gathered}
\]
together with the equations already defining F03 and
$a_{21}=b_{21}=0$.  These are I04 and I04b.  All later pairs vanish on both
components.

\paragraph{F04: the Lotka-type center family.}
The first three pairs split according to the auxiliary ratio $t$.  The
one-sided components with $t=1/2$ restore to I05 and I05b.  The symmetric
component with $t=-1$ and its cubic compatibility equation is I10.  The
$t=0$ and $t=-1/2$ boundaries reduce to already listed descendants in
I01--I04 and I06/I06b.  The saturated complement of these components is
the unit ideal, so F04 produces no further family.

\paragraph{F05: the exponential-integral family.}
After the first three pairs, both components force all coefficients of one
nonlinear equation except its autonomous terms to vanish.  They are the
triangular loci I01 and I01b.  There is no open isochronous component of
F05 away from these boundaries.

\paragraph{F06 and F06b: the triangular center families.}
On F06 the radical of the first three pairs is
\[
\sqrt{J_{06}+\mathcal L_3}
=J_{06}+\id{a_{21},b_{21},a_{20},a_{30},b_{11},b_{02},b_{12},b_{03}},
\]
which is I01 after redundant F06 equations are removed.  Exchanging the two
equations gives I01b from F06b.  Pairs four through seven vanish identically.

\paragraph{F07 and F07b: the Riccati center families.}
For F07 the first three pairs add
\[
a_{21}=b_{21}=a_{11}=a_{02}=a_{12}=a_{03}=b_{11}=b_{30}=0.
\]
This is I02.  The exchanged computation gives I02b.  No later obstruction
survives.

\paragraph{F08: the separated center family.}
The reduced radical is
\[
a_{21}=b_{21}=a_{11}=b_{11}=a_{02}=b_{02}
=a_{12}=b_{12}=a_{03}=b_{03}=0,
\]
with the autonomous coefficients unrestricted.  This is precisely I03,
and every later pair vanishes.

\paragraph{F09: the exceptional quadratic family.}
After saturation by $a_{20}b_{20}$, the first three pairs give
$\mathfrak R_{09}^{(3)}=\id1$.  Hence the declared nondegenerate part of F09
has no isochronous point.  Its zero boundaries were assigned to earlier
center families before saturation.

\paragraph{F10: the mixed family at ratio $-1/2$.}
Substitution of the F10 equations gives
$\mathfrak R_{10}^{(3)}=\id1$ on $a_{20}b_{20}\ne0$.  Thus F10 contributes
no isochronous component; its degenerate boundaries occur in one-sided
families.

\paragraph{F11: the mixed family at ratio $1/4$.}
The first three pairs likewise give
$\mathfrak R_{11}^{(3)}=\id1$ after saturation by $a_{20}b_{20}$.  Both
complex square-root branches in F11 are excluded; their zero-cubic
intersection lies on an earlier reversible boundary.

\paragraph{F12: the unified mixed family.}
The first three pairs leave two algebraic closures, according to which
quadratic cross coefficient vanishes.  In either closure all mixed and
self-cubic coefficients appearing in F12 reduce to zero; the remaining
auxiliary equations force the point onto a reversible, separated, or
one-sided boundary.  Direct substitution identifies the survivors with
I01--I04.  Saturation by $a_{20}b_{20}r(7\tau-2)$ leaves no new open
component.

\paragraph{F13: the sextic inverse-integrating-factor family.}
The two components of $\mathfrak R_{13}^{(3)}$ force the cubic coefficients
to vanish and impose, respectively,
\[
e=1,\quad a_{20}^2c=b_{20}b_{02},
\qquad\text{or}\qquad
ce=1,\quad b_{20}^2e=a_{20}a_{02},\quad
a_{20}a_{02}c=b_{20}^2.
\]
Together with the F13 equations these are lower reversible or one-sided
components already contained in I01--I04.  There is no new component on the
open set $a_{20}b_{20}ce(ce^2-1)\ne0$.

\paragraph{F14: the first equal-cubic family.}
The first three pairs force all cubic and mixed coefficients to zero and
then give either
\[
a_{20}a_{02}=b_{20}^2
\qquad\text{or}\qquad
a_{20}^2=b_{20}b_{02}.
\]
Both alternatives are contained in previously obtained reversible or
one-sided I families.  Thus F14 has no new open isochronous component.

\paragraph{F15: the second equal-cubic family.}
The corresponding reduced alternatives are
\[
a_{20}a_{02}=-2b_{20}^2
\qquad\text{or}\qquad
2a_{20}^2=-b_{20}b_{02}.
\]
They are again contained in earlier reversible or one-sided I families, so
F15 produces no new open component.

\paragraph{F16: the reciprocal-parameter mixed family.}
After saturation by $a_{20}b_{20}$ the first three pairs generate the unit
ideal:
\[
\mathfrak R_{16}^{(3)}=\id1.
\]
Its zero boundaries occur in F12--F15 or in a one-sided family and have
already been retained there.

\paragraph{F17: the first zero-self-cubic center family.}
The first three pairs leave two zero-cubic boundaries, selected by
\[
2r+1=0,\quad 3a_{20}^2+4b_{20}b_{02}=0,
\]
or by
\[
r+2=0,\quad 4a_{20}a_{02}+3b_{20}^2=0.
\]
After the F17 equations are used, both loci are contained in I01/I01b or
I02/I02b.  Later pairs introduce no new component.

\paragraph{F18: the second zero-self-cubic center family.}
The corresponding alternatives are
\[
6r-1=0,\quad 5a_{20}^2-36b_{20}b_{02}=0,
\]
or
\[
r-6=0,\quad 36a_{20}a_{02}-5b_{20}^2=0.
\]
After the F18 equations are imposed, these loci are contained in I01/I01b
or I02/I02b.  Later pairs introduce no new component.

\paragraph{F19 and F19b: the one-sided ratio $-1/2$ families.}
On F19 the first three pairs reduce to
\[
a_{20}=a_{02}=a_{12}=a_{03}=a_{21}=0,
\quad b_{11}=b_{30}=b_{03}=b_{21}=0,
\quad 2a_{30}+b_{12}=0,\quad 2a_{11}+b_{20}=0,
\]
after redundant equations are removed.  This is I06; exchange gives I06b.
All later pairs vanish.

\paragraph{F20 and F20b: the one-sided ratio $-1/3$ families.}
The reduced radical adds
\[
a_{20}=a_{02}=a_{30}=a_{12}=a_{03}=a_{21}=0,
\quad b_{11}=b_{02}=b_{30}=b_{12}=b_{21}=0,
\quad 3a_{11}+b_{20}=0.
\]
This is I07, and exchange gives I07b.

\paragraph{F21 and F21b: the one-sided ratio $1/3$ families.}
The first three pairs give
\[
a_{20}=a_{02}=a_{30}=a_{03}=a_{21}=0,
\quad b_{11}=b_{02}=b_{12}=b_{21}=0,
\quad 3a_{11}-b_{20}=0,\quad 3a_{12}+b_{30}=0.
\]
These are the conditions I08; exchange gives I08b.  The seventh-pair check
vanishes identically on both final ideals.

\paragraph{F22 and F22b: the triple-coupling families.}
The first three pairs split F22 into an entirely degenerate triangular
component and a component satisfying
\[
a_{20}=a_{11}=a_{02}=a_{12}=a_{03}=a_{21}=0,
\quad b_{11}=b_{02}=b_{30}=b_{03}=b_{21}=0,
\quad 3a_{30}+b_{12}=0.
\]
The later pairs send both components into I01 or I02 boundaries.  The
exchanged statement holds for F22b.  No new open I family results.

\paragraph{F23 and F23b: the one-sided quadratic cross-coupling families.}
The radical of the first three pairs imposes
\begin{gather*}
a_{20}=a_{11}=a_{02}=a_{03}=a_{21}=0,\qquad
b_{11}=b_{03}=b_{21}=0,\\
a_{12}+b_{30}=a_{30}+b_{12}=0,\qquad
b_{02}b_{30}=b_{20}b_{12}.
\end{gather*}
Using the F23 equations reduces this to I04; exchange gives I04b.

\paragraph{F24 and F24b: the independent-cubic one-sided families.}
On the stated open set the first three pairs give
$\mathfrak R_{24}^{(3)}=\id1$; exchange gives the same result for F24b.
The zero boundary of $a_{02}b_{20}$ belongs to an earlier one-sided family.

\paragraph{F25 and F25b: the one-sided sextic-factor families.}
Saturation by $a_{02}b_{20}r(r-1)$ gives
$\mathfrak R_{25}^{(3)}=\id1$, and the exchanged calculation gives the same
result for F25b.  All zero boundaries of the saturation factor were already
separated in F22--F24.

\paragraph{F26 and F26b: the one-sided mixed rank-boundary families.}
The first three pairs give $\mathfrak R_{26}^{(3)}=\id1$ on the stated open
set, and exchange gives the same result for F26b.  In particular, the
retained value $r=6/7$ does not create an isochronous component.  Every zero
boundary of the saturation factor lies in an earlier one-sided family.

\paragraph{F27 and F27b: the doubly degenerate half-ratio families.}
For F27 the first three pairs add
\[
a_{20}=a_{11}=a_{02}=a_{30}=a_{03}=a_{21}=0,
\quad b_{20}=b_{11}=b_{12}=b_{03}=b_{21}=0,
\quad 2a_{12}+b_{30}=0.
\]
This is I09.  Exchange gives I09b, and all later pairs vanish.

\paragraph{F28: the exceptional homogeneous cubic family.}
Saturation by $a_{30}b_{30}$ gives
\[
\mathfrak R_{28}^{(3)}=\id1.
\]
Thus F28 has no isochronous point on its declared open chart.  Its cubic-axis
boundaries were treated in F21/F21b and F27/F27b.

\subsubsection{Higher-pair exclusions and final coverage}
Most surviving radicals above are final after the first three pairs.  The
few Hamiltonian and reversible cubic closures require later pairs.  The first
new nonzero reductions include
\[
\frac{105}{32}a_{03}^{2}b_{03}^{2},\qquad
-\frac5{64}b_{03}^{4},\qquad
140b_{02}^{2}b_{30}^{3},\qquad
\frac{385}{72}a_{03}^{3}b_{02}^{4},
\]
appearing respectively in $\ell_4^+$, $\ell_4^+$, $\ell_4^+$, and
$\ell_5^+$.  Both zero factors of each expression, and their exchanged
forms, are followed.  They lead to I01 or I02 and their exchanged forms;
their open complements are empty.  Direct reduction of all seventy
coefficient polynomials through pair seven on I01--I13 gives zero.

The preceding paragraphs treat every center family F01--F28 and every
explicit exchanged form.  The exact inclusion certificate contains 109
normal-form checks: each candidate component lies in one of I01--I13, and
each final I ideal contains the restricted linearization ideal.  Therefore
\[
V(\mathcal L_7)=V(\mathcal L_\infty)
=\bigcup_{\mathrm I\mu}V(K_{\mathrm I\mu}),
\]
with 21 explicitly written original-coordinate alternatives.  This proves
that the isochronous decomposition has no omitted center branch.

\section{How the coverage certificate proves that no branch is omitted}
For each chart let $J$ be the ideal generated by the already imposed equations and by the reduced obstruction coefficients, and let $\delta$ be the product of all quantities declared nonzero on that chart.  The exact calculation establishes
\begin{equation}\label{eq:chartcert}
\sqrt{J:\delta^\infty}=\bigcap_{q=1}^{s}\sqrt{Q_q:\delta^\infty}.
\end{equation}
The right-hand ideals are the child charts or final families.  Both inclusions are checked by exact normal-form computations; an empty open chart is certified by $1\in J:\delta^\infty$.  The complementary hypersurface $\delta=0$ is represented by earlier or explicitly generated child charts.  Repeating \eqref{eq:chartcert} down the finite tree proves coverage of the parent chart.

The root identities are elementary and disjoint at the level of constructible sets:
\[
\{a_{20}b_{20}\ne0\}\ \dot\cup\
\{a_{20}=0,b_{20}\ne0\}\ \dot\cup\
\{a_{20}\ne0,b_{20}=0\}\ \dot\cup\
\{a_{20}=b_{20}=0\}.
\]
The first center equations are imposed before this split.  Every subsequent normalization is made only on a chart on which its denominator is in $\delta$.  Exchange of the two equations is an invertible parameter substitution, so it neither creates nor loses points.  Original-coordinate restoration uses the diagonal covariance
\[
p_{ij}=a_{ij}\alpha^{1-i}\beta^{-j},\qquad
q_{ij}=b_{ij}\beta^{1-i}\alpha^{-j},\qquad \alpha\beta\ne0,
\]
under which $g_k$ is multiplied by $(\alpha\beta)^{-k}$.

For centers, the certificate tree contains the nondegenerate, one-sided, doubly degenerate, and pure-cubic leaves followed successively in Section~\ref{sec:center-branches-prose}; a seventh-order obstruction removes the last finite-order residual component.  For isochronicity, every point of $V(\mathcal L_7)$ already belongs to the complete center locus because the truncated eigenfunctions imply vanishing of the first seven center quantities.  Intersecting each center family with the first three resonant pairs produces 23 candidate charts; the fourth through seventh pairs remove the remaining false candidates or send their zero boundaries to the listed I families.  The resulting finite trees prove the two radical identities in Theorems~\ref{thm:center} and~\ref{thm:iso}.  The closure orders are verified sufficient truncations; no claim of minimality is made.

\paragraph{Reproducibility.}
The electronic calculation bundle contains the exact coefficients of $g_1,\ldots,g_6$, the seventh-order closing obstructions, the Singular scripts for every saturation and radical comparison, the normal-form logs, the original-coordinate restoration checks, and the isochronous decomposition through pair seven.  The formulas printed here are the reduced factors used to navigate that certificate tree; the large unreduced polynomials are not substituted into the prose because several occupy many pages individually.